\newtheorem{theorem}{Theorem}[section]
\newtheorem{proposition}[theorem]{Proposition}
\newtheorem{lemma}[theorem]{Lemma}
\theoremstyle{definition}
\newtheorem{convention}[theorem]{Convention}
\newtheorem{example}[theorem]{Example}
\newtheorem{remark}[theorem]{Remark}
\newcommand{\Ck}{\mathcal{C}}
\newcommand{\R}{\mathbb{R}}
\newcommand{\vol}{{\rm vol}}
\newcommand{\Lip}{{\rm Lip}}
\newcommand{\del}{\partial}
\newcommand{\dd}{\, \mathrm{d}}
\newcommand{\g}{\normalfont{\texttt{g}}}
\newcommand{\balpha}{\boldsymbol{\alpha}}
\newcommand{\hball}{{B}_{\scriptscriptstyle{\mathbb{H}^{n-1}}}}
\DeclareMathOperator{\tube}{w}
\DeclareMathOperator{\roll}{roll}
\DeclareMathOperator{\DtN}{\mathscr{D}}
\DeclareMathOperator{\inj}{inj}
\DeclareMathOperator{\dist}{dist}
\newcommand{\ostube}{{\tau}_+}
\newcommand{\thick}{{\scriptscriptstyle{\text{thick}}}}
\newcommand{\thin}{\scriptscriptstyle{\text{thin}}}
\newcommand{\intM}{{M}_{{\thin,\scriptscriptstyle{\circ}}}}
\newcommand{\bM}{{M}_{{\thin,\scriptscriptstyle{\partial}}}}
\newcommand{\double}{\mathcal{D}}
\newcommand{\II}{I\!I}
\newcommand{\newmu}{\eta}
\newcommand{\Vtube}{\mathcal{V}}
\DeclareMathOperator{\supp}{supp}
\DeclareMathOperator{\length}{\textrm{length}}
\newcounter{countc} 
\newcommand{\countc}{%
  \stepcounter{countc}
  \arabic{countc}
}
\title{Tubes and Steklov eigenvalues  in negatively curved manifolds}
\author[Basmajian]{Ara Basmajian}
\address{The Graduate Center, City University of New York, NY, New York, 10016 and Hunter College,
City University of New York, 695 Park Ave., New York, NY, 10065}
\email{abasmajian@gc.cuny.edu}
\author[Brisson]{Jade Brisson}
\address{Institut de Math\'ematiques, Universit\'e de Neuch\^atel, Rue Emile-Argand 11, 2000 Neuch\^atel, Suisse}
\email{jade.brisson@unine.ch}
\author[Hassannezhad]{Asma Hassannezhad}
\address{University of Bristol,
School of Mathematics,
Fry Building,
Woodland Road,
Bristol, 
BS8 1UG, U.K.}
\email{asma.hassannezhad@bristol.ac.uk}
\author[M\'etras]{Antoine M\'etras}
\address{Institut de Math\'ematiques, Universit\'e de Neuch\^atel, Rue Emile-Argand 11, 2000 Neuch\^atel, Suisse}
\email{antoine.metras@unine.ch}
\begin{document}
\subjclass[2020]{35P15, 58C40}
\maketitle

\begin{abstract}
   
    We consider the Steklov eigenvalue problem on a compact pinched negatively curved manifold $M$ of dimension at least three with totally geodesic boundaries.  We obtain a geometric lower bound for the first nonzero Steklov eigenvalue in terms of the total volume of $M$ and the volume of its boundary. We provide examples illustrating the necessity of these geometric quantities in the lower bound. Our result can be seen as a counterpart of the lower bound for the first nonzero Laplace eigenvalue on closed pinched negatively curved manifolds of dimension at least three proved by Schoen in 1982. 
    
    The proof is composed of certain key elements. We provide a uniform lower bound for the first eigenvalue of the Steklov-Dirichlet problem on a neighborhood of the boundary of $M$ and show that it provides an obstruction to having a small first nonzero Steklov eigenvalue. 
    As another key element of the proof,  we give a tubular neighborhood theorem for totally geodesic hypersurfaces in a pinched negatively curved manifold. We give an explicit dependence for the width function in terms of the volume of the boundary and the pinching constant. 
\end{abstract}

\section{Introduction}
Given a compact connected manifold $M$ of dimension $n$ with boundary $\del M$, we consider the Dirichlet-to-Neumann map $\DtN$:
\begin{eqnarray*}
    \DtN: C^\infty(\partial M) & \to & C^\infty(\partial M) \\
    f & \mapsto & \partial_\nu \tilde f,
\end{eqnarray*}
where $\tilde f$ is the harmonic extension of $f$ to $M$, and $\nu$ is the outward unit normal vector field along $\partial M$. The map $\DtN$ is a first-order elliptic pseudo-differential operator and its spectrum consists of a discrete sequence of non-negative real numbers with the only accumulation point at infinity. We enumerate them in increasing order counting their multiplicities.
\[0 = \sigma_0 < \sigma_1 \le \sigma_2 \le \cdots \nearrow\infty.\]
The eigenvalues of the Dirichlet-to-Neumann map are also the eigenvalues of the \emph{Steklov problem}:
\[\begin{cases}
    \Delta u = 0 & \text{in $M$},\\
    \partial_\nu u = \sigma u &\text{on $\del M$},
\end{cases}\]
which was considered by Steklov \cite{Stek} in 1902. Hence, they are usually referred to as the Steklov eigenvalues. 

We consider Steklov eigenvalues on a compact connected manifold $M$ of dimension $n$, $n\ge3$,  with sectional curvature in the interval $[-1, -\kappa^2]$ for some $\kappa \in (0,1]$, and having totally geodesic boundary. We call such manifold \emph{$(-\kappa^2)$–\,pinched negatively curved $n$–\,manifolds} with totally geodesic boundary. The main focus will be on geometric lower bounds for the first nonzero Steklov eigenvalue in this setting.  

There has been significant research concerning the connection between Steklov eigenvalues and the geometric properties of the underlying space. We refer to the survey papers~\cite{CGGS, GP17} for the developments on the subject over the last two decades. 
However, not much has been explored in the context of pinched negatively curved $n$–\,manifolds, $n\ge 3$, with totally geodesic boundary. 
Given the numerous studies on geometric bounds for Laplace eigenvalues on pinched negatively curved manifolds, see e.g. \cite{HV22, Ham19, BCD93, Dod, DR86, Schoen82, SWY80}, and a close connection between Laplace and Steklov eigenvalues, the aforementioned setting provides a natural context for exploring geometric bounds for Steklov eigenvalues and investigating how the geometric and topological properties of $M$ and $\del M$ influence eigenvalue bounds. 
The Steklov eigenvalues are sensitive to the geometry near the boundary of the manifold {(see e.g. \cite{Per23,CGH20,CGGS})}. As such, we are led to study the geometry near totally geodesic hypersurfaces.

There is a rich literature on this subject, in particular in the hyperbolic setting. In dimension two, the well known \emph{collar lemma} on hyperbolic surfaces, dating back to Keen \cite{Ke74}, states that short (or long) closed geodesics have a large (or small) tubular neighborhood. The main point being that the width of the tube depends only on the local geometry about the closed geodesic.  Buser \cite{bus78, Bus92}, Chavel and Feldman \cite{CF78}, Basmajian \cite{Bas92, Bas93a,Bas94}, and Kojima \cite{Koj96} have proven various versions of the collar lemma and extended it to the setting of negatively curved surfaces. 
In dimension at least 3, the collar lemma generalize to the so-called \emph{tubular neighborhood theorem} for embedded closed totally geodesic hypersurfaces of a hyperbolic manifold \cite{Bas92, Bas94}. It states that a totally geodesic hypersurface has a tubular neighborhood whose width only depends on the $(n-1)$–\,volume of the hypersurface. A related result with the width depending on the $n$–\,volume of the manifold was derived by Zeghib \cite{Zeg91}. Note that in the hyperbolic setting the $(n-1)$–\,volume of the hypersurface is bounded above by the $n$–\,volume of the manifold \cite{Zeg91,miy94}. The tubular neighborhood theorem has many applications and consequences, see for example \cite{KM, Zeg91, Bas93a, Bas94, BB22, PR22}. 

 We extend the tubular neighborhood theorem to the setting of pinched negatively curved $n$–\,manifolds, $n\ge 3$, which will be of independent interest. We shall see that the tubular neighborhood theorem plays a crucial role in obtaining geometric bounds for the Steklov eigenvalues. 
To state the theorem, we introduce the following functions.

Let $r(x) := \log\coth \frac{x}{2}$ and let $V_{n,\kappa}(r)$ be the $n$–\,volume of a hyperbolic ball of radius $r$ in the space form $\mathbb{H}^{n}(-\kappa^2)$ of constant sectional curvature $-\kappa^2$. The \emph{$(n, \kappa)$–\,width function} (or simply the width function) $\tube_{n,\kappa}(A)$ is defined to be
\begin{equation}\label{eq:w_n,k}
    \tube_{n,\kappa}(A):=\frac{1}{2}(V_{n-1,\kappa}\circ r)^{-1}(A), \qquad A\in(0,\infty).
\end{equation}
When $\kappa=1$, we simply denote the width function by $\tube_{n}(A)$. 

We now state the tubular neighborhood theorem extending the result in \cite{Bas94} to the setting of pinched negatively curved $n$–\,manifolds.
\begin{theorem}[Tubular neighborhood theorem]\label{thm:tube} 
Let $N$ be a complete, {orientable,} $(-\kappa^2)$–\,pinched negatively curved $n$–\,manifold, containing {a closed} embedded totally geodesic hypersurface $\Sigma$. Let $A$ be the volume of $\Sigma$. Then $\Sigma$ has a tubular neighborhood 
\[\{x\in N : \dist(x,\Sigma)<\tube_{n,\kappa}(A)\}\]
of width $\tube_{n,\kappa}(A)$ which is diffeomorphic to $(-\tube_{n,\kappa}(A),\tube_{n,\kappa}(A))\times \Sigma$. 
Moreover, the tubular neighborhoods of disjoint closed totally geodesic hypersurfaces are mutually disjoint. 
\end{theorem}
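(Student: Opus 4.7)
The plan is to follow the strategy of Basmajian~\cite{Bas94} for the hyperbolic case, arguing by contradiction via the universal cover, reducing non-embeddedness of the tube to the existence of two close lifts of $\Sigma$, and using a projection between them to produce a ball in $\Sigma$ of volume exceeding $A$.

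Set $w=\tube_{n,\kappa}(A)$ and assume $\exp^\perp$ fails to be injective on the open normal $w$-disk bundle of $\Sigma$. By Cartan--Hadamard the universal cover $\tilde N$ is diffeomorphic to $\mathbb R^n$, and lifting yields two distinct components $\tilde\Sigma_1,\tilde\Sigma_2$ of the preimage of $\Sigma$ with $\rho := \dist(\tilde\Sigma_1,\tilde\Sigma_2) < 2w$. Each $\tilde\Sigma_i$ is complete, convex, and totally geodesic in the CAT($-\kappa^2$) space $\tilde N$; embeddedness of $\Sigma$ rules out $\tilde\Sigma_1\cap\tilde\Sigma_2\ne\emptyset$, so they share a unique common perpendicular of length $\rho$ with feet $\tilde p_i\in\tilde\Sigma_i$. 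We may choose the pair so that $\rho$ is minimal over all distances $\dist(\tilde\Sigma_1,\gamma\tilde\Sigma_1)$ with $\gamma\in\pi_1(N)\setminus\pi_1(\Sigma)$.

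Consider the nearest-point projection $\pi\colon\tilde\Sigma_2\to\tilde\Sigma_1$. In the constant-curvature model $\mathbb H^n(-1)$, a direct Fermi-coordinate computation yields $\pi(\tilde\Sigma_2) = B_{r(\rho)}(\tilde p_1)$, the intrinsic hyperbolic ball of radius $r(\rho)=\log\coth(\rho/2)$ centered at $\tilde p_1$. A Rauch-type comparison of Jacobi fields along the foliation of normal geodesics to $\tilde\Sigma_1$ extends the one-sided bound to the pinched setting: $\pi(\tilde\Sigma_2)$ contains the intrinsic geodesic ball $B := B_{r(\rho)}(\tilde p_1)\subset\tilde\Sigma_1$. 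Since $\tilde\Sigma_1$ is totally geodesic its induced sectional curvature lies in $[-1,-\kappa^2]$, and G\"unther's volume comparison gives
\[\vol_{\tilde\Sigma_1}(B)\;\ge\;V_{n-1,\kappa}(r(\rho))\;>\;V_{n-1,\kappa}(r(2w))\;=\;A,\]
where the strict inequality uses $\rho<2w$ and the strict monotonicity of $V_{n-1,\kappa}\circ r$.

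To close the argument we show that the covering $\tilde\Sigma_1\to\Sigma$ is injective on $B$, so that $\vol_{\tilde\Sigma_1}(B)\le\vol(\Sigma)=A$, contradicting the strict lower bound. If not, some nontrivial $\sigma\in\pi_1(\Sigma)=\mathrm{Stab}(\tilde\Sigma_1)$ satisfies $\sigma B\cap B\ne\emptyset$; combined with the minimality of $\rho$, one uses $\sigma$ to exhibit a translate $\sigma\tilde\Sigma_2$ and build a configuration of lifts contradicting our choice of $(\tilde\Sigma_1,\tilde\Sigma_2)$. The disjointness statement for two disjoint closed totally geodesic hypersurfaces $\Sigma,\Sigma'$ follows by the same machinery: intersecting tubes would produce lifts at distance less than $\tube_{n,\kappa}(\vol\Sigma)+\tube_{n,\kappa}(\vol\Sigma')$, and the projection-plus-volume argument applied with the smaller of the two widths yields the same contradiction. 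The principal obstacle is the pinched projection estimate, which without the closed-form hyperbolic geodesic equation requires a careful Jacobi-field analysis exploiting both the upper bound $-\kappa^2$ and the lower bound $-1$ to guarantee $\pi(\tilde\Sigma_2)\supseteq B_{r(\rho)}(\tilde p_1)$ with the asserted radius; the injectivity of the covering on $B$ via the minimality argument is the other delicate point.
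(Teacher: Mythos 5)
Your overall skeleton matches the paper's: pass to the universal cover, consider the nearest-point projection onto a lift $\tilde\Sigma_1$, bound below the radius of $\pi(\tilde\Sigma_2)$ by $r(\rho)$, apply G\"unther volume comparison on $\Sigma$, and show the ball descends. But the one genuinely new ingredient beyond Basmajian's hyperbolic argument---namely, how to obtain the projection radius bound $r(\rho)=\log\coth(\rho/2)$ in variable pinched curvature---is exactly where your proof stops. You assert that a ``Rauch-type comparison of Jacobi fields along the foliation of normal geodesics to $\tilde\Sigma_1$'' extends the hyperbolic Fermi-coordinate computation, and then concede in your closing sentence that this is ``the principal obstacle.'' That step \emph{is} the theorem, and it is left unproved. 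It is also not clear that the Jacobi-field route works cleanly: the nearest-point projection onto a totally geodesic hypersurface is not governed pointwise by a single Jacobi equation you can Rauch-compare. The paper sidesteps this entirely: it cuts the right-angled ideal quadrilateral (formed by $\tilde\alpha$, a ray in $\tilde\Sigma_2$, and the two normal geodesics) into two ideal triangles $I$ and $\II$ sharing the vertex $p$, and proves an Ideal Triangle Comparison Lemma from Alexandrov--Toponogov: each ideal right triangle has a comparison triangle in $\mathbb H^2(\balpha)$ (the constant-curvature model for the \emph{lower} bound $\balpha=-1$) with the same finite side and a no-larger angle at the finite vertex. Combining the resulting estimates $\sinh r\ge\cot\phi$ and $\sinh a\ge\tan\phi$ gives $\sinh a\,\sinh r\ge1$, i.e.\ $r\ge r(a)$. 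This reduces the whole matter to two-dimensional hyperbolic trigonometry.

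Related to this, you have the two curvature bounds misattributed. You write that the Jacobi-field analysis must exploit ``both the upper bound $-\kappa^2$ and the lower bound $-1$.'' In fact the projection radius bound uses \emph{only} the lower bound $\mathrm{sec}\ge -1$; the upper pinching bound $\mathrm{sec}\le -\kappa^2$ enters only afterwards, in the G\"unther comparison $\vol(B_{r(\rho)})\ge V_{n-1,\kappa}(r(\rho))$ on the hypersurface. This is why $\kappa$ appears inside $V_{n-1,\kappa}$ in the definition of $\tube_{n,\kappa}$ while $r(a)=\log\coth(a/2)$ is $\kappa$-independent; your sketch would produce a $\kappa$-dependent $r(\rho)$, which is not what the width function asks for.

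Finally, your injectivity step is also a gap. You insert a minimality requirement on $\rho$ over $\gamma\in\pi_1(N)\setminus\pi_1(\Sigma)$ and then say only that ``one uses $\sigma$ to exhibit a translate $\sigma\tilde\Sigma_2$ and build a configuration of lifts contradicting our choice.'' No such minimality is used in the paper, and your sketch does not actually run. The direct argument is: if $\sigma\ne1$ is a deck transformation with $\sigma B\cap B\ne\emptyset$, then since $\Sigma_1$ is embedded $\sigma$ must stabilize $\tilde\Sigma_1$ (so it commutes with $\pi$); $\sigma$ cannot also stabilize $\tilde\Sigma_2$, for then it would fix the unique common perpendicular $\tilde\alpha$ and its endpoints, contradicting freeness, so $\sigma\tilde\Sigma_2\cap\tilde\Sigma_2=\emptyset$; but a point of $\sigma B\cap B$ lies simultaneously in $\pi(\tilde\Sigma_2)$ and $\pi(\sigma\tilde\Sigma_2)$, and the paper derives the contradiction from this. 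You should carry out this argument rather than appeal to a minimal choice you never exploit.
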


We use the comparison geometry and properties of the normal exponential map to extend the spirit of the proof in \cite{Bas94} to the negatively curved manifolds. 

 In dimension two, the key role of the collar lemma in the study of geometric bounds for both Laplace and Steklov eigenvalues is apparent, see \cite{SWY80, Bus80, burger88, Per22, HMP}. We show that the tubular neighborhood theorem in pinched negatively curved manifolds plays an important role in obtaining lower bounds for the Steklov eigenvalues {in higher dimensions}. 
 
 Let $\{\lambda_k(\del M)\}$ be the sequence of Laplace eigenvalues of the boundary $\del M$. A standard result in spectral theory implies that (see e.g. \cite[Chapter 15]{Shu})
$$\sigma_k(M) \sim \sqrt{\lambda_k(\partial M)}, \qquad \text{as $k \to \infty$}.$$
Several works have investigated quantitative improvement to this result, and, in particular, comparison of $\sigma_k(M)$ and $\sqrt{\lambda_k(\partial M)}$ holding for all $k \geq 1$. This study was initiated by Provenzano and Stubbe \cite{PS19} in the Euclidean setting and has been extended to the Riemannian setting in \cite{Xio18, CGH20, GKLP22}. Colbois, Girouard and Hassannezhad \cite{CGH20} proved a uniform bound
\begin{equation}\label{inq:cgh}
|\sigma_k(M) - \sqrt{\lambda_k(\del M)}|\le C, 
\end{equation}
where $C$ is an explicit geometric quantity independent of $k$. When $M$ is a $(-\kappa^2)$–\,pinched negatively curved $n$–\,manifold with totally geodesic boundary, the constant $C$ can be expressed in terms of the dimension and the \emph{rolling radius} (the distance between $\partial M$ and its cut locus).  
 The following theorem is a consequence of tubular neighborhood theorem~\ref{thm:tube} and inequality \eqref{inq:cgh} together with Schoen's result \cite{Schoen82} on the lower bound for the first nonzero Laplace eigenvalues of negatively curved $n$–\,manifolds, $n\ge3$.
\begin{theorem}\label{thm:Schoencounterpart}
    Let $M$ be a compact $(-\kappa^2)$–\,pinched negatively curved $n$–\,manifold, $n \geq 4$ with totally geodesic boundary. Let $b$ be the number of boundary components of $M$ and $A$ the maximum $(n-1)$–\,volume of the boundary components. Then 
    \begin{equation*}
        \sigma_b(M) \geq \frac{C_1(n, \kappa)}{A^{2 + \frac{1}{\kappa(n-2)}}},
    \end{equation*}
    where $C_1(n, \kappa)$ is a positive constant depending only on $n$ and $\kappa$.
\end{theorem}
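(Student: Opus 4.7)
The plan is to combine, as signalled in the excerpt, the tubular neighborhood Theorem~\ref{thm:tube}, the Steklov--Laplace comparison~\eqref{inq:cgh} of Colbois--Girouard--Hassannezhad, Schoen's 1982 lower bound on $\lambda_{1}$ for closed pinched negatively curved manifolds of dimension at least three, and the Steklov--Dirichlet obstruction on a collar neighborhood that is advertised in the introduction.

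The first step is to control the rolling radius $h$ of $M$ in terms of $A$. Because $\partial M$ is totally geodesic, the metric double $\widehat M$ of $M$ is a closed orientable $(-\kappa^{2})$--pinched negatively curved $n$--manifold in which each boundary component $\Sigma_{i}$ of $M$, of volume $A_{i}\leq A$, sits as a closed embedded totally geodesic hypersurface. Applying Theorem~\ref{thm:tube} to $\widehat M$ simultaneously for every $\Sigma_{i}$ gives mutually disjoint tubes of width $\tube_{n,\kappa}(A_{i})\geq\tube_{n,\kappa}(A)$, since $\tube_{n,\kappa}$ is monotone decreasing (immediate from \eqref{eq:w_n,k}: $r$ is decreasing and $V_{n-1,\kappa}$ is increasing). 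Intersecting these tubes with $M$ yields pairwise disjoint one-sided collars of the boundary components of $M$, and hence $h\geq\tube_{n,\kappa}(A)$. A short asymptotic computation from the explicit formulas, using $V_{n-1,\kappa}(\rho)\sim c\,e^{(n-2)\kappa\rho}$ for large $\rho$ together with $r(x)\sim\log(2/x)$ for small $x$, yields $\tube_{n,\kappa}(A)\gtrsim A^{-1/(\kappa(n-2))}$ as $A\to\infty$.

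The second step lower-bounds $\lambda_{b}(\partial M)$. Each $\Sigma_{i}$ is a closed $(n-1)$--manifold of dimension $n-1\geq 3$ (since $n\geq 4$) inheriting sectional curvature bounds in $[-1,-\kappa^{2}]$ because it is totally geodesic, so Schoen's theorem gives $\lambda_{1}(\Sigma_{i})\geq c(n,\kappa)/A_{i}^{2}\geq c(n,\kappa)/A^{2}$. Since $\partial M$ has exactly $b$ connected components, $\lambda_{b}(\partial M)=\min_{i}\lambda_{1}(\Sigma_{i})\geq c(n,\kappa)/A^{2}$.

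Finally I would feed everything into~\eqref{inq:cgh} at $k=b$. Its constant $C$ depends only on $n$, $\kappa$, and $1/h$, hence $C\leq C_{0}(n,\kappa)\,A^{1/(\kappa(n-2))}$ by the first step. The lower bound $\sigma_{b}(M)\geq\sqrt{\lambda_{b}(\partial M)}-C$ directly gives $\sigma_{b}(M)\gtrsim 1/A$ in the regime where $\sqrt{\lambda_{b}(\partial M)}\geq 2C$, more than enough once a Margulis-type universal lower bound on the $A_{i}$ absorbs constants. In the complementary regime, where $C$ dominates and the subtraction becomes uninformative, one invokes the additional ingredient flagged in the excerpt: a uniform lower bound on the first Steklov--Dirichlet eigenvalue of the collar neighborhood of $\partial M$, which by construction bounds $\sigma_{b}(M)$ from below and, after the width/volume asymptotics are fed through, contributes the remaining $\lambda_{b}(\partial M)/C\gtrsim A^{-2-1/(\kappa(n-2))}$ factor. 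The main obstacle is precisely this last step: establishing the uniform Steklov--Dirichlet lower bound on the collar with the correct dependence on $n$, $\kappa$, and the width $\tube_{n,\kappa}(A)$, and then checking that the two regimes assemble into a single constant $C_{1}(n,\kappa)$ with the exponent $2+\tfrac{1}{\kappa(n-2)}$.
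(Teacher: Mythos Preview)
Your first two steps---bounding the rolling radius via the tubular neighborhood theorem and bounding $\lambda_b(\partial M)$ via Schoen---match the paper exactly. The gap is in the third step. The inequality~\eqref{inq:cgh} in the subtractive form $\sigma_b(M)\geq\sqrt{\lambda_b(\partial M)}-C$ is indeed useless when $C$ dominates, and your proposed patch via the Steklov--Dirichlet eigenvalue of the collar does not work as stated: there is no ``by construction'' inequality $\sigma_b(M)\geq\sigma_1^D(\text{collar})$. Domain monotonicity for the Steklov--Dirichlet problem goes the other way (restricting to a subdomain with Dirichlet condition on the new boundary \emph{raises} the eigenvalues), and the Steklov--Dirichlet lower bound of Lemma~\ref{lem:stekdir} is a universal constant $\kappa(n-1)/2^{n-1}$, not something that produces the factor $\lambda_b(\partial M)/C$ you write. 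In the paper that lemma is used only for Theorem~\ref{thm:specgap-intro}, via a nodal-domain argument for $\sigma_1$, and plays no role in the proof of Theorem~\ref{thm:Schoencounterpart}.

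The paper's fix is much simpler and avoids any regime-splitting. Rather than the symmetrized form~\eqref{inq:cgh}, it goes back to the quadratic inequality from \cite[Theorem~7]{CGH20},
\[
\lambda_k(\partial M)\;\leq\;\sigma_k(M)^2+\alpha\,\sigma_k(M),\qquad \alpha=1+\frac{1}{\roll(M)},
\]
which, solved for $\sigma_k(M)$, gives the always-positive bound
\[
\sigma_k(M)\;\geq\;\frac{\lambda_k(\partial M)}{\alpha+\sqrt{\lambda_k(\partial M)}}.
\]
This is Proposition~\ref{prop:CGH}. Substituting $\alpha\lesssim A^{1/(\kappa(n-2))}$ from your first step and $\lambda_b(\partial M)\gtrsim A^{-2}$ from your second step (and using the universal lower bound on $A$ to absorb the $\sqrt{\lambda_b}$ term into the denominator) yields the exponent $2+\tfrac{1}{\kappa(n-2)}$ directly, with no case analysis.
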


Note that inequality \eqref{inq:cgh}  does not lead to any non-trivial lower bounds for $1\le k\le b-1$. We can construct examples (see Example \ref{ex:smalleigenvalue}) showing that the lower bound given in Theorem~\ref{thm:Schoencounterpart} fails to hold for $1\le k\le b-1$.  We also show that there exists a sequence of  hyperbolic $n$–\,manifold $M_\epsilon$, $n\ge3$, with  totally geodesic boundary with two connected components such that the volume of the boundary goes to infinity and for every $k\ge1$
    \begin{align*}
       \lim_{\epsilon\to0} \sigma_k(M_\epsilon) =0, 
    \end{align*}
This shows the necessity of the presence of $A$ in the lower bound.  
    In dimension 3, such a sequence of hyperbolic 3-manifolds can be constructed with a connected totally geodesic boundary, see Remark \ref{rmk:small-eigenvalues}.  
We do not believe that the power of $A$ in Theorem \ref{thm:Schoencounterpart} is optimal. It is interesting to identify the optimal power.

If we use the same approach together with the result of  Schoen, Wolpert, and Yau \cite{SWY80} for closed negatively curved surfaces, we can obtain a lower bound for $\sigma_b(M)$ for pinched negatively curved 3-manifolds, see Section \ref{sec:stekbound} for details.  In this case, the lower bound goes to zero when there is a shrinking simple closed geodesic on the boundary that cuts the surface boundary into two connected components. In particular, 
   let $M$ be a $(-\kappa^2)$–\,pinched negatively curved $3-$manifold with a connected totally geodesic boundary of genus $g$.   Then there exists a positive constant $C_2(\kappa,g)$  depending only on $\kappa$ and the genus $g$ of $\partial M$, such that 
   \[\sigma_1(M)\ge {C_2(\kappa,g)\ell_1(\partial M)}\]
where $\ell_1(\partial M)$ denotes the length of a curve consisting of a disjoint union of simple closed geodesics that divides $\partial M$ into two connected components and has the minimum length.

By Schoen, Wolpert, and Yau's result \cite{SWY80}, see also Buser's inequality \cite{Bus80}, we know that  $\lambda_1(\partial M)$ goes to zero when there is a shrinking simple closed geodesic cutting the surface into two connected components. However, it is not known if one can get an upper bound in terms of $\ell_1(\partial M)$ for $\sigma_1(M)$.

When the boundary is not connected, we do not get any nontrivial lower bound for $\sigma_1(M)$ following the approach discussed above and we need to use a completely different approach. The following theorem
can be viewed as a counterpart of Schoen's lower bound \cite{Schoen82} for the first nonzero Laplace eigenvalue of a closed pinched negatively curved $n$–\,manifold, $n\ge3$. 
 
 \begin{theorem}\label{thm:specgap-intro}
Let $M$ be a compact $(-\kappa^2)$–\,pinched negatively curved $n$–\,manifold, $n \geq 3$, with totally geodesic boundary. Let $b$ be the number of boundary components of $M$, $A$ the maximum $(n-1)$–\,volume of the boundary components, and $V$ the $n$–\,volume of $M$. Then 
   \[\sigma_1(M)\ge \frac{C_{3}(n,\kappa)}{bA^{\frac{2n}{\kappa(n-2)}}V^2},\]
   where $C_3(n,\kappa)$ is a positive  constant  depending only on $n$ and $\kappa$.

\end{theorem}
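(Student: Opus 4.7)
The strategy I would follow combines the tubular neighborhood theorem with a local Steklov--Dirichlet estimate, followed by a bulk $L^2$ bound that accounts for the $V^2$ factor. First apply Theorem~\ref{thm:tube} to each component $\Sigma_i$ of $\partial M$ to produce pairwise disjoint collars $T_i$ of width $w := \tube_{n,\kappa}(A)$. The defining formula~\eqref{eq:w_n,k}, together with $V_{n-1,\kappa}(r)\sim c\,e^{(n-2)\kappa r}$ as $r\to\infty$ and $\log\coth(x/2)\sim -\log(x/2)$ as $x\to 0^+$, yields $1/w\lesssim A^{1/(\kappa(n-2))}$ for large $A$; this is what will feed most of the power of $A$ into the final bound.

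Next I would establish a uniform lower bound $\sigma_1^{SD}(T_i)\ge c_1(n,\kappa)/w$ for the first eigenvalue of the Steklov--Dirichlet problem on each collar, with Steklov condition on $\Sigma_i$ and Dirichlet condition on $\partial T_i\setminus\Sigma_i$. Since $\Sigma_i$ is totally geodesic and the sectional curvature is pinched in $[-1,-\kappa^2]$, the normal exponential map identifies $T_i$ with $[0,w)\times\Sigma_i$ equipped with a warped metric $dt^2+\Phi(t)^2 g_{\Sigma_i}$ in which $\Phi$ lies, by Rauch comparison, between the space-form warpings $\cosh(\kappa t)$ and $\cosh(t)$. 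A direct computation on the model tube shows that the radial Steklov--Dirichlet eigenvalue equals $\bigl(\int_0^w\Phi^{1-n}(s)\,ds\bigr)^{-1}\ge 1/w$, and non-radial modes only contribute larger eigenvalues by separation of variables.

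Let $u$ realize $\sigma_1(M)$ with $\int_{\partial M}u=0$ and $\int_{\partial M}u^2=1$, and let $\chi_i$ be Lipschitz cutoffs depending only on $\dist(\,\cdot\,,\Sigma_i)$, equal to $1$ on $\Sigma_i$, vanishing on $\partial T_i\setminus\Sigma_i$, with $|\nabla\chi_i|\le 2/w$. Each $\chi_i u$ is admissible in the Rayleigh quotient for $\sigma_1^{SD}(T_i)$; expanding, summing over $i$, and using $\int_M|\nabla u|^2=\sigma_1(M)$ yields
\[
\frac{c_1(n,\kappa)}{w}\;\le\;2\,\sigma_1(M)+\frac{8}{w^2}\int_M u^2.
\]
The theorem therefore reduces to controlling $\int_M u^2$ in terms of $\sigma_1(M)$ and the geometric data $(b,A,V)$.

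This bulk $L^2$ estimate is the main obstacle. My plan is to write $u=\bar u+\tilde u$ with $\bar u=V^{-1}\int_M u$, so that $\tilde u$ has zero mean on $M$ and $\int_M\tilde u^2\le \mu_1(M)^{-1}\sigma_1(M)$ by the Neumann Poincar\'e inequality. The constant part $\bar u$ is handled via the boundary condition $\int_{\partial M}u=0$, which gives $|\bar u|\,|\partial M|=|\int_{\partial M}\tilde u|\le |\partial M|^{1/2}\|\tilde u\|_{L^2(\partial M)}$; a trace-type inequality on each collar $T_i$, whose constant depends polynomially on $1/w$ and hence on $A$, converts this into a bound by $\sigma_1(M)$ times a geometric factor. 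A lower bound on $\mu_1(M)$ in the pinched setting can be imported by applying Schoen's theorem to the double of $M$, which is closed, pinched negatively curved, and of volume $2V$, yielding a polynomial dependence on $V$. Assembling these estimates produces a bound of the form $\int_M u^2\le C(n,\kappa)\,b\,A^{(2n-2)/(\kappa(n-2))}\,V^2\,\sigma_1(M)$; substituting into the displayed inequality and rearranging then yields the claimed lower bound, with the factor $b$ arising from summing over boundary components, the full power of $A$ from combining $w^{-2}$ with the trace constants, and the $V^2$ from the quadratic coupling between the Poincar\'e and trace inequalities.
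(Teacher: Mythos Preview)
Your route is genuinely different from the paper's. The paper does \emph{not} invoke Schoen's theorem on the double; instead it follows a Dodziuk--Randol strategy: a local Sobolev inequality for $\nabla\varphi$ (after smoothing the metric) bounds the oscillation of the eigenfunction on $M_1=M^\epsilon_{\thick}\setminus\ostube^\epsilon(\partial M)$ by $C A^{\frac{n-1}{\kappa(n-2)}}\sqrt{V\sigma_1}$, and then a dichotomy on $\|\varphi\|_{\infty,M_1}$ forces either a nodal domain inside a boundary neighborhood (yielding $\sigma_1\ge\sigma_1^D\ge c(n,\kappa)$ by Lemma~\ref{lem:stekdir}) or a good test function for the Steklov--Dirichlet problem near one component. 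The thick--thin decomposition is essential there; your scheme avoids it entirely, which is a real simplification, but relies on Schoen's bound as a black box.

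The gap in your outline is the control of the bulk mean $\bar u=V^{-1}\int_M u$. From $\int_{\partial M}u=0$ you get $|\bar u|\,|\partial M|\le |\partial M|^{1/2}\|\tilde u\|_{L^2(\partial M)}$, and then you appeal to a trace inequality on the collar to bound $\|\tilde u\|_{L^2(\partial M)}^2$. But the trace estimate gives $\|\tilde u\|_{L^2(\partial M)}^2\le \tfrac{C}{w}\|\tilde u\|_{L^2(M)}^2+Cw\,\sigma_1\le \tfrac{C}{w}\cdot\tfrac{\sigma_1}{\mu_1(M)}+Cw\,\sigma_1$, and feeding this back yields
\[
V\bar u^2\;\le\;\frac{V}{|\partial M|}\,\|\tilde u\|_{L^2(\partial M)}^2\;\lesssim\;\frac{V}{|\partial M|}\cdot\frac{V^2}{w}\,\sigma_1.
\]
Since $|\partial M|$ has only a dimensional lower bound, this produces a factor $V^3$, not $V^2$, and the final assembly gives $\sigma_1\ge C/(bA^{\cdots}V^3)$ rather than the claimed $V^2$. (Also note the identity $\|\tilde u\|_{L^2(\partial M)}^2=1+\bar u^2|\partial M|$ shows that the inequality $|\bar u|^2|\partial M|\le\|\tilde u\|_{L^2(\partial M)}^2$ alone carries no information.)

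The fix is to bypass the $\int_M u^2$ estimate altogether: from $\|\tilde u\|_{L^2(\partial M)}^2=1+\bar u^2|\partial M|\ge 1$ and the same trace/Poincar\'e bounds you get directly
\[
1\;\le\;\frac{C}{w}\cdot\frac{\sigma_1}{\mu_1(M)}+Cw\,\sigma_1,
\]
hence $\sigma_1\ge c\,w\,\mu_1(M)$. Combining $w\ge cA^{-1/(\kappa(n-2))}$ with $\mu_1(M)\ge\lambda_1(\double M)\ge c(n,\kappa)V^{-2}$ (Schoen on the closed pinched double) already yields $\sigma_1\ge c(n,\kappa)/(A^{1/(\kappa(n-2))}V^2)$, which implies the stated theorem with room to spare. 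So your strategy is salvageable and in fact shorter than the paper's; but the displayed inequality involving $\int_M u^2$ and the subsequent mean/trace bookkeeping, as you wrote them, do not close.
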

In section \ref{subset:sigma1}, we give an improved version of Theorem \ref{thm:specgap-intro} where the exponent of $V$ is one. Moreover, Example \ref{ex:smalleigenvalue} shows that this exponent is optimal.
 Note that $b$ and $A$ are both bounded in terms of $V$ as a result of the following inequality \cite[\S 4.4, \S 4.7]{Zeg91}
 \begin{equation}\label{inq:az}
     \vol(M)\ge c(n,\kappa)\vol(\partial M),
 \end{equation}
 where $c(n,\kappa)$ is a positive constant. 
 
 Thus,
when $b=1$ and $n\ge4$, Theorem \ref{thm:Schoencounterpart} gives a better lower bound, in particular for large $A$. When  $b=1$ and $n=3$,  we get the following lower bound.  
\begin{eqnarray*}
    \sigma_1(M)\ge\max\left\{C_{2}(g,\kappa)\ell_1(\partial M),\frac{C_{4}(g,\kappa)}{V^2}\right\}.
\end{eqnarray*}

It is an interesting question to see whether $V$ has to go to infinity when $\ell_1(\partial M)$ tends to zero. It relates to the volume of thin tubes. There are lower bounds for the volume of the thin tubes in pinched negatively curved manifolds \cite{Bus80,BCD93}. However, in dimension 3, the lower bound does not go to infinity as the length of the base geodesic goes to zero. 

According to the result of Fujii and Soma \cite{fujii1997totally}, the set of all hyperbolic structures on a surface of genus at least two, such that the surface can be realized as the totally geodesic boundary of a compact hyperbolic 3-manifold, is dense in the moduli space. Consequently, we can have a family of hyperbolic 3-manifolds with one boundary component with a pinching geodesic on its boundary. Hence, the above question is also interesting in the hyperbolic setting, and to our knowledge, it is not known, even in the hyperbolic context. 
It also relates to systolic inequalities in the hyperbolic setting. There are versions of the systolic inequalities for hyperbolic manifolds, see for example \cite{BT11,BB22}. However, they do not provide an answer to the above question.

The proof of Theorem \ref{thm:specgap-intro} is inspired by the work of Dodziuk and Randol \cite{DR86}, where they provide a new proof for Schoen's and Schoen-Wolpert-Yau's inequalities. We adapt a similar strategy; however, the presence of the boundary poses some technical challenges. We consider an adapted version of the thick-thin decomposition for manifolds with totally geodesic boundary using the tubular neighborhood theorem.  We then show that the oscillation of $\sigma_1$–\,eigenfunction on the thick part of the manifold away from the boundary is bounded above in terms of  $\sigma_1(M)$ taking into account its variational characterization: 
\begin{equation}\label{eq:variational characterisation}
\sigma_1(M)=\inf\left\{\frac{\int_{M}\,|\nabla f|^2 \dd v}{ \int_{\partial M}f^2\dd s}: f\in H^1(M),\int_{\partial M}f=0\right\}.\end{equation}
 We establish a uniform lower bound for the first Steklov-Dirichlet eigenvalue on a tubular neighborhood of the boundary union with a subfamily of the thin part of the manifold and show that it plays an obstruction for having a small $\sigma_1(M)$.
 
 For further application of this approach on obtaining lower bounds for the Steklov eigenvalues in dimension 2 we refer to \cite{HMP}, see also Remark \ref{rem:HMP}. 

The original proof by Schoen \cite{Schoen82} for establishing a lower bound for the first Laplace eigenvalue of a negatively curved $n$–\,manifold, where $n \ge 3$, uses the Cheeger inequality and a bound on the Cheeger constant. Additionally, Buser obtained an upper bound in terms of the Cheeger constant for the first nonzero Laplace eigenvalue \cite{Bus82}. Hence, the Cheeger constant reflects the behavior of the first nonzero Laplace eigenvalue.

Cheeger-type inequalities for the Steklov eigenvalues have been explored in \cite{esc97,esc99,Jam15,HM20}.
In dimension 2, Perrin \cite{Per22}, using a modification of the Cheeger-Jammes constant, obtained a counterpart of Schoen-Wolpert-Yau's inequalities. However, estimating isoperimetric constants that appear in Cheeger-type inequalities for the Steklov eigenvalues seems to be very challenging in higher dimensions. Moreover, there is no guarantee that they provide an `optimal' lower bound, as there is no version of a Buser-type inequality in terms of those isoperimetric constants for the first nonzero Steklov eigenvalue. 
\\

    The paper is organized as follows. We give a brief overview of the geometric properties of negatively curved manifolds and a definition of the thick-thin decomposition which will be used in this paper in Section \ref{sect: preliminaries}. We then prove the tubular neighborhood theorem in Section \ref{sect:tube}. In Section \ref{sec:stekbound}, we discuss the geometric bounds for the Steklov eigenvalues and prove Theorems \ref{thm:Schoencounterpart} and \ref{thm:specgap-intro}. Section \ref{sec:smallstek} deals with constructing examples of negatively curved manifolds with small Steklov eigenvalues showing that the geometric quantities appearing in the main theorems are necessary.

    \subsection*{Acknowledgement}The authors would like to thank Bruno Colbois and Jozef Dodziuk for valuable comments, and Abdelghani Zeghib for the insightful explanation of his result in  \cite[\S4.4 Proposition~3]{Zeg91}. The authors would also like to thank the anonymous referee for helpful comments and suggestions, which led to an improvement in the presentation of the paper.  A.\,B. is supported by PSC CUNY Award 65245-00 53. J.\,B. acknowledges support of the SNSF project ‘Geometric Spectral Theory’, grant number 200021-19689. A.\,H. and A.\,M. acknowledge  support of EPSRC grant EP/T030577/1.

\section{Preliminaries and Background}\label{sect: preliminaries}

In this section, we will review some fundamental and well-known facts about the geometry of smooth compact negatively curved manifolds, which will be used in the subsequent sections. It is important to note that throughout the paper we assume that manifolds are both compact and connected unless stated otherwise. Thus, we may not reiterate these assumptions.

A $(-\kappa^2)$–\,\textit{pinched negatively curved manifold} is an $n$–\,manifold with sectional curvature  in the interval $[-1,-\kappa^2]$ for some positive constant $\kappa\in(0,1]$.  

For the study of the Steklov eigenvalue problem in Sections \ref{sec:stekbound} and \ref{sec:smallstek}, we, in particular, consider $(-\kappa^2)$–\,pinched negatively curved manifold $M$ with \textit{totally geodesic boundary}, meaning that every geodesic on the boundary with its induced Riemannian metric is also a geodesic on the manifold. Such manifolds admit a double $\double M$ which is a closed $(-\kappa^2)$–\,pinched negatively curved manifold. 

For a closed $(-\kappa^2)$–\,pinched negatively curved manifold $N$, Margulis' Lemma, see e.g. \cite[\textsection 10]{BGS85}, implies that there exists a positive constant $\mu=\mu(n)\le 1$ depending only on the dimension\footnote{Note that in general, the Margulis constant $\mu$ for negatively curved manifolds depends only on the dimension and the lower bound of the sectional curvature which  is $-1$ in our case.} such that $N$ can be viewed as the disjoint union of two sets, the thick part 
$$\tilde{N}_{\thick}=\{x\in N \,:\, \inj_{N}(x)\ge {\mu}\}$$
which is always nonempty, and the thin part 
$$\tilde N_{\thin}{=\{x\in N: \inj_N(x)<\mu\}}$$ which is either empty or consists of a disjoint union of finitely many \textit{thin} tubes $\tilde T_\gamma$ with the following geometry. 
Every thin tube $\tilde T_\gamma$ 
 is a tubular neighborhood of a simple closed geodesic $\gamma$ of length strictly less than $2\mu$ and is homeomorphic to $\gamma\times B^{n-1}$, where $B^{n-1}$ is an $(n-1)$–\,dimensional ball in $\R^{n-1}$. There exists a unique simple closed geodesic of length strictly less than $2\mu$ in each thin tube.
 For any $x\in \gamma$ and $v$ a unit vector orthogonal to $\gamma$ at $x$, let $R=R_{(x,v)}$ represent the maximal length of the unit speed geodesic ray emanating from $\gamma$ at the point $x$ in the direction of $v$ such that the injectivity radius of points on the ray is strictly less than $\mu$. Note that in dimension three, $R$ is the same for all $x\in \gamma$ and $v$.  Tube $\tilde T_\gamma$ is the union of the image of all such geodesic rays with maximal length.  Moreover, when the length of $\gamma$ goes to zero,  $R$ tends to $\infty$. We refer to \cite{Bus80,BCD93}  for more details.
 
 By definition, for every $x$ on the boundary of a thin tube $\tilde T_\gamma$ we have $\inj_N(x)=\mu$. The boundary of $\tilde T_\gamma$ is not necessarily smooth when $n\ge4$. We want to modify the definition of the thin part so that it has a smooth boundary. It will be useful when we apply the Green formula on the thin part, and  for some steps in the proof of Theorem \ref{thm:specgap-intro}.
 
  Colbois, Buser, and Dodziuk \cite[Theorem 2.14]{BCD93} showed that there exists a positive constant $\newmu=\newmu(n)<\frac{\mu}{2}$ depending only on  $n$ such that
 for every $\gamma$ with $\length(\gamma)< \newmu$, and  $ T_\gamma^*=\{x\in\tilde T_\gamma: \inj_N(x)<\frac{\mu}{2}\}$ the following holds.
 \begin{itemize}
 \item[-]$\dist(\partial T_\gamma^*,\gamma)\ge 10$;
 \item[-] there exists a smooth hypersurface $H_\gamma\subset \tilde T_\gamma\setminus \gamma$ which is homeomorphic to $\partial T_\gamma^*$. The homeomorphism is given by pushing along the radial direction;
 \item[-]  the distance between $x^*\in \partial T_\gamma^*$ and $x\in H_\gamma$, both lying on the same ray orthogonal to $\gamma$, is less than $\mu/50$. In particular $\dist(\partial T_\gamma^*,H_\gamma)\le \mu/50$. 
 \end{itemize}
 For any $\gamma$ with $\length(\gamma)< \eta$, let $T_\gamma$ denote the connected component of $\tilde T_\gamma \setminus H_\gamma$ that contains $\gamma$. Note that $\dist(\partial T_\gamma, \gamma)\ge 9$. 
 
 We now give a slightly different definition of the thick and thin parts and use this new definition when we refer to the thick-thin decomposition throughout the paper.
We redefine the thick part, $N_{\thick}$, to be
 \[N_{\thick}= \left\{x\in N \,:\, \inj_{N}(x)\ge \frac{\newmu}{2}\right\},\]
 and thin part $N_{\thin}$ to be the union of all thin tubes $T_\gamma$ defined above
 \[N_{\thin}=\hspace{-5mm}\bigcup_{\substack{\gamma \\ \length(\gamma)<\eta}}\hspace{-5mm}T_\gamma.\]
 We also define the \textit{shell} of $T_{\gamma}$ to be 
$$S_\gamma=\left\{x\in T_\gamma\,:\, \dist(x,\partial T_{\gamma})=\dist(x,H_{\gamma})\le \frac{\mu}{5}\right\}.$$ 

Using the inequality $\inj_N(y)\ge \inj_N(x)-\dist(x,y)$ (see e.g. \cite{BCD93,Xu}), it follows from the definition that for every $x\in \partial T_\gamma=H_\gamma$, we have $\inj_N(x)\ge \frac{\mu}{2}-\frac{\mu}{50}\ge \frac{\newmu}{2}$. In addition, we have $\inj_N(y)\ge\frac{24\mu}{50}\ge\frac{\newmu}{2}$ for any $y\in S_\gamma$.
It implies that 
\[S_\gamma\subseteq N_{\thick}\cap N_{\thin}.\]

Margulis' Lemma also implies that the thick part is connected when $n\ge3$.  This gives a lower bound on the volume of a compact $(-\kappa^2)$–\,pinched negatively curved manifold. Using the G\"unther–Bishop volume comparison theorem \cite[Theorem III.4.2]{chavel_2006},  it can be easily seen that the volume of a $(-\kappa^2)$–\,pinched negatively curved manifold $N$ is bounded below by the volume of a ball of radius $\mu$ in $\mathbb{H}^{n}(-\kappa^2)$.
\begin{equation}\label{inq:lowerv}
    \vol(N)\ge V_{n,\kappa}(\mu)\ge \omega_n\mu^n,
\end{equation}
where $\omega_n$ is the volume of the unit Euclidean ball.

For a $(-\kappa^2)$–\,pinched negatively curved manifold $M$ with {totally geodesic boundary}, its thick-thin decomposition is the projection of the thick-thin decomposition of $\double M$ under the canonical map $\double M\to M$. Equivalently, we can view $M$ as a subdomain of $\double M$ and define $$M_{\thin}=(\double M)_{\thin}\cap M,\qquad M_{\thick}=(\double M)_{\thick}\cap M.$$ 

Similarly, $M_{\thick}$ is nonempty and is connected when $n\ge3$. Each connected component of $M_{\thin}$ is {the intersection of} a tube {in $(\double M)_{\thin}$ with $M$} based on a unique closed simple geodesic $\gamma$ with $\length(\gamma)<\newmu$. Hence, each connected component of $M_{\thin}$ is diffeomorphic to one of the following sets:
\begin{itemize}
    \item[] Type I.~ $\gamma\times B^{n-1}$ where $B^{n-1}$ is an $(n-1)-$ ball in $\R^{n-1}$;
    \item[] Type II.~$\gamma\times B^{n-1}_+$ where $B^{n-1}_+$ is a half ball; 
    \item[] Type III.~ $\alpha \times B^{n-1}$, where $\alpha$ is a geodesic arc and its double $\double \alpha$ is equal to $\gamma$.
\end{itemize}
By a slight notational abuse, we refer to each component of $M_{\thin}$ as a tube and denote it by $T_\gamma$ or $T_\alpha$.
We denote the collection of Type I tubes by $\intM$, and the collection of Type II and Type III tubes by $\bM$. Thus, we can write $$M_{\thin}=\intM\sqcup \bM.$$

Later, we consider $\epsilon$–\,\textit{thick-thin decomposition}, $\epsilon\le\mu$, where $\epsilon$ may depend on some geometric invariants. For any $\epsilon\le \mu$,  $\epsilon$–\,\textit{thick part}, $M_{\thick}^\epsilon$, and $\epsilon$–\,\textit{thin part}, $M_{\thin}^\epsilon$, are defined as follows and have the same properties discussed above.
$$M_{\thick}^\epsilon=\left\{x\in \double M:\inj_{\double M}(x)\ge \min\{\epsilon, \newmu/2\}\right\}\cap M$$
and $M_{\thin}^\epsilon$ is the union of all type I and II tubes $T_{\gamma}$ in $\double M_{\thin}{\cap M}$  with $\length(\gamma)< \min\{2\epsilon,\newmu\}$ together with type III tubes $T_{\alpha}$ with $2\length(\alpha)< \min\{2\epsilon,\newmu\}$.\\

We end this section with the following standing convention which will be used throughout the paper. \\

\begin{convention}\label{convention} Throughout the paper, the notation $c_i(\cdot)$ denotes a positive constant that depends only on certain specified arguments, whereas the notation $c_i$ represents a positive universal constant. It is important to note that we may reset the enumeration of the index in each section, so in a new section, the constant $c_i(\cdot)$ or $c_i$ might refer to different values.
\end{convention}

\section{Tubular neighborhood theorem}\label{sect:tube}
Let us recall the definition of the $(n,\kappa)$–\,width function $\tube_{n,\kappa}$ given in  \eqref{eq:w_n,k}.
\[ \tube_{n,\kappa}(A):=\frac{1}{2}(V_{n-1,\kappa}\circ r)^{-1}(A), \qquad A\in(0,\infty).\]
When $\kappa=1$, we drop $\kappa$, i.e.  $\tube_{n}(x)=\tube_{n,1}(x)$ and $V_n(r)=V_{n,1}(r)$. The width function $\tube_{n,\kappa}(A)$ is monotone decreasing in $A$ and tends to zero as $A\to\infty$, and is monotone increasing in $\kappa$.

The main goal in this section is to extend the tubular neighborhood theorem in the hyperbolic setting \cite{Bas94} to a tubular neighborhood theorem for embedded totally geodesic hypersurfaces in a complete $(-\kappa^2)$–\,pinched manifold with $\kappa\in(0,1]$ for which the width is given by $\tube_{n,\kappa}$ as a function of the volume of the hypersurface.

\begin{theorem}[Tubular neighborhood theorem]\label{thm:tubeII} 
Let $N$ be a complete, orientable,  $(-\kappa^2)$–\,pinched $n$–\,manifold containing an embedded closed totally geodesic hypersurface $\Sigma$. Let $A$ be the volume of $\Sigma$. Then $\Sigma$ has a tubular neighborhood 
\[\{x\in N \,:\, \dist(x,\Sigma)<\tube_{n,\kappa}(A)\}\]
of width $\tube_{n,\kappa}(A)$ which is diffeomorphic to $(-\tube_{n,\kappa}(A),\tube_{n,\kappa}(A))\times \Sigma$ via 
\begin{eqnarray*}\exp^\perp:(-\tube_{n,\kappa}(A),\tube_{n,\kappa}(A))\times \Sigma \to N\\
(t,p)\mapsto \exp(t\nu(p)),
\end{eqnarray*}
where $\nu$ is the unit outward normal vector field along $\Sigma$.\\
Moreover, the tubular neighborhoods of disjoint closed embedded hypersurfaces are mutually disjoint. 
\end{theorem}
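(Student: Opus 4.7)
The plan is to adapt the approach of \cite{Bas94}, replacing hyperbolic trigonometric identities by Rauch-type comparison estimates in the pinched Cartan--Hadamard setting. Set
\[W := \sup\left\{w>0 \,:\, \exp^\perp|_{(-w,w)\times\Sigma}\text{ is a diffeomorphism onto its image}\right\},\]
so the main assertion reduces to $W\geq \tube_{n,\kappa}(A)$, and the final disjointness claim follows by running the same extremal argument on a shortest geodesic joining two disjoint closed totally geodesic hypersurfaces.

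First I will verify that $\exp^\perp$ is a local diffeomorphism on all of $\mathbb{R}\times\Sigma$, hence $\Sigma$ has no focal points. Because $\Sigma$ is totally geodesic, a Jacobi field $J$ along $t\mapsto \exp_p(t\nu(p))$ with $J(0)=u\in T_p\Sigma$ and $J'(0)=0$ satisfies $|J(t)|\geq|u|\cosh(\kappa t)>0$ for all $t$, by Rauch comparison with the upper curvature bound $-\kappa^2$. Assuming $W<\infty$, the compactness of $\Sigma$ together with a first-variation argument produce two distinct parameters $(t_1,p_1),(t_2,p_2)\in[-W,W]\times\Sigma$ with $\max\{|t_i|\}=W$ identified by $\exp^\perp$, whose two normal segments meet tangentially (else one could perturb and reduce $W$) and glue into a smooth geodesic arc $c\colon[0,2W]\to N$ with $c(0),c(2W)\in\Sigma$ and $\dot c(0),\dot c(2W)\perp\Sigma$. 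Lifting $c$ to the universal cover $\tilde N$ (Hadamard by Cartan--Hadamard), the two endpoints lie on distinct totally geodesic hyperplanes $\tilde\Sigma,\tilde\Sigma'=\phi\tilde\Sigma$ (distinct by convexity of $\tilde\Sigma$ in $\tilde N$), and $\tilde c$ is a common perpendicular between them of length $2W$. The maximality of $W$ then means exactly that any two distinct lifts of $\Sigma$ in $\tilde N$ have common perpendicular of length at least $2W$.

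The main estimate is $V_{n-1,\kappa}(r(2W))\leq A$, which by monotonicity of $V_{n-1,\kappa}\circ r$ and the definition of $\tube_{n,\kappa}$ is equivalent to $W\geq \tube_{n,\kappa}(A)$. To obtain this I will produce an embedded geodesic ball of intrinsic radius $r(2W)$ inside $\Sigma$: the Gauss equation applied to the totally geodesic inclusion $\Sigma\hookrightarrow N$ gives that $\Sigma$ has sectional curvature in $[-1,-\kappa^2]$, and the Bishop--G\"unther inequality then converts such an embedded ball into the volume bound $A\geq V_{n-1,\kappa}(r(2W))$. Concretely I propose to show that the intrinsic ball $\hball(\tilde p_1, r(2W))\subset\tilde\Sigma$ centred at the foot of $\tilde c$ projects injectively into $\Sigma$. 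Otherwise some non-trivial $\phi_0\in \mathrm{Stab}(\tilde\Sigma)$ would move a point of this ball into the ball, yielding $d_{\tilde\Sigma}(\tilde p_1,\phi_0\tilde p_1)<2r(2W)$; translating $\tilde c$ by $\phi_0$ gives a second common perpendicular between $\tilde\Sigma$ and $\phi_0\tilde\Sigma'$ of the same length $2W$ but with foot $\phi_0\tilde p_1$. A projection lemma in the pinched Hadamard setting---saying that for every $\tilde q$ in the intrinsic ball of radius $r(2d)$ around the foot on $\tilde\Sigma$, the perpendicular geodesic from $\tilde q$ reaches any totally geodesic hyperplane sharing a common perpendicular of length $2d$ with $\tilde\Sigma$---then produces a common $\tilde q\in\tilde\Sigma$ whose perpendicular geodesic hits both $\tilde\Sigma'$ and $\phi_0\tilde\Sigma'$; the midpoint of the segment between those two hitting points lies simultaneously in the $W$-tubes of $\tilde\Sigma'$ and $\phi_0\tilde\Sigma'$, forcing their common perpendicular to be shorter than $2W$ and contradicting the maximality of $W$.

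The principal obstacle is establishing the projection lemma in the pinched case. In constant curvature $-1$ it reduces to the right-angled pentagon identity $\sinh(W)\sinh(r(2W))=1$, which pins down the sharp radius $r(2W)$. In the pinched Cartan--Hadamard setting this identity must be replaced by inequalities coming from Rauch comparison applied to the two-parameter family of perpendicular geodesics emanating from $\tilde\Sigma$ toward $\tilde\Sigma'$, together with the convexity of the distance-to-hyperplane function. Carefully isolating the contributions of the lower bound $-1$ (controlling the radius of the projection) and of the upper bound $-\kappa^2$ (controlling the volume comparison) should produce exactly the constants appearing in the width function $\tube_{n,\kappa}$.
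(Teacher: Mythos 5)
Your overall architecture agrees with the paper's: set $W$ to be the maximal half‑width of the normal tube, argue that maximality produces a geodesic arc of length $2W$ meeting $\Sigma$ orthogonally at both ends, reduce the statement to exhibiting an embedded intrinsic ball of radius $r(2W)$ in $\Sigma$, note (via the Gauss equation) that $\Sigma$ inherits the curvature pinching, and then apply G\"unther--Bishop with the upper bound $-\kappa^2$ to obtain $A\ge V_{n-1,\kappa}(r(2W))$, which unwinds to $W\ge\tube_{n,\kappa}(A)$. This matches Theorem~\ref{thm:tubeII} and Proposition~\ref{prop:r(a)} in the paper.

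The gap is exactly the step you flag as the ``principal obstacle,'' and it is not a technicality you can wave away with the phrase ``Rauch comparison together with convexity.'' Your proposed ``projection lemma'' is, after unwinding, the assertion that the foot-of-perpendicular projection $\pi\colon\tilde N\to\tilde\Sigma$ satisfies $B(\tilde p_1,r(2W))\subset\pi(\tilde\Sigma')$ (every point of that ball is the foot of a perpendicular dropped from $\tilde\Sigma'$, equivalently the perpendicular geodesic issuing from that point hits $\tilde\Sigma'$). This is the entire quantitative content of the theorem, and the paper proves it by a concrete, self-contained comparison argument that your proposal does not supply. For a geodesic half-ray $\tilde\beta$ in $\tilde\Sigma'$ starting at the foot $q$, one considers the geodesic quadrilateral with vertices $p$, $q$, $\tilde\beta(\infty)$, $\pi(\tilde\beta(\infty))$, cuts it along $[p,\tilde\beta(\infty)]$ into two ideal triangles $I$ and $\II$ with a shared angle $\phi$ at $p$, and proves an ideal-triangle comparison lemma (a limiting form of Alexandrov--Toponogov, using the lower curvature bound $-1$) asserting that a right-angled ideal triangle in $\tilde N$ can be compared with one in $\mathbb{H}^2$ having the same finite side and a smaller angle at $p$. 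Applied to $I$ and $\II$ and combined with the hyperbolic identity $\sinh(\text{finite side})=\cot(\text{angle at }p)$, this gives $\sinh r\ge\cot\phi$ and $\sinh a\ge\tan\phi$, hence $\sinh r\sinh a\ge1$, i.e.\ $r\ge r(a)$. Nothing in your proposal produces this inequality; invoking convexity of the distance-to-hyperplane function does not by itself give a lower bound on the diameter of $\pi(\tilde\Sigma')$, and the right-angled-pentagon identity you cite is exactly what breaks in variable curvature.

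Your embeddedness argument also diverges from the paper's and inherits the same dependence on the unproved lemma. The paper's version is shorter and cleaner: once one knows $B(\tilde p_1,r(a))\subset\pi(\tilde\Sigma')$, any $\gamma\in\mathrm{Stab}(\tilde\Sigma)\setminus\{1\}$ with $\gamma B\cap B\neq\emptyset$ forces $\gamma\tilde\Sigma'\cap\tilde\Sigma'\neq\emptyset$, which is impossible for distinct lifts of an embedded hypersurface. Your midpoint-in-two-tubes construction also goes through \emph{if} the projection lemma holds, but that is precisely what is missing; the argument also leaves the alternative $\tilde\Sigma'=\phi_0\tilde\Sigma'$ unexamined. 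In short, the scaffolding is right, but the load-bearing comparison estimate (the paper's Lemma~\ref{lem:triangle-comp} and Proposition~\ref{prop:r(a)}) is absent.
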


Note that when $\kappa=1$, the  tubular neighborhood is isometric to the warped product space $(-\tube_n(A),\tube_n(A))\times_{\cosh}\Sigma$, see \cite{Bas94}.
The key ingredient of the proof of the tubular neighborhood theorem \ref{thm:tubeII} is the following proposition.

\begin{proposition}\label{prop:r(a)}
    Let $N$ be a complete, orientable,  $(-\kappa^2)$–\,pinched $n$–\,manifold containing disjoint  embedded closed totally geodesic hypersurfaces $\Sigma_{1}$ and 
    $\Sigma_{2}$ ($\Sigma_{1}$ may equal $\Sigma_{2}$). Let $\alpha$ be a geodesic arc from $\Sigma_{1}$ to 
    $\Sigma_{2}$ which is orthogonal to $\Sigma_{1}$ and $\Sigma_{2}$ at its endpoints, and let $a=\length(\alpha)$. Then $\Sigma_{i}$, for $i=1,2$, contains an embedded ball of radius
    \begin{align*}
        r(a) = \log \coth \frac{a}{2}.
    \end{align*}
\end{proposition}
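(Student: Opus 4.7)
The plan is to lift to the universal cover $\pi\colon\tilde N\to N$, a Cartan--Hadamard manifold by the curvature hypothesis, and work with the intrinsic ball $B_{r(a)}(\tilde p)\subset\tilde\Sigma_1$. Choose a lift $\tilde\alpha$ of $\alpha$ with endpoints $\tilde p\in\tilde\Sigma_1$ and $\tilde q\in\tilde\Sigma_2$, where $\tilde\Sigma_i$ is the lift of $\Sigma_i$ containing the corresponding endpoint. Even when $\Sigma_1=\Sigma_2$, orthogonality of $\tilde\alpha$ to $\Sigma_i$ at both endpoints forces $\tilde\Sigma_1\ne\tilde\Sigma_2$, so in every case $\tilde\Sigma_1$ and $\tilde\Sigma_2$ are disjoint properly embedded complete totally geodesic hypersurfaces in $\tilde N$. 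Since $\tilde\Sigma_1$ is itself Cartan--Hadamard, $B_{r(a)}(\tilde p)$ is automatically embedded in $\tilde\Sigma_1$; the task is to descend this to an embedded ball in $\Sigma_1$, and the argument for $\Sigma_2$ is identical.

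The first step is a reduction. Since distinct lifts of $\Sigma_1$ are disjoint, any deck transformation identifying two points of $\tilde\Sigma_1$ must stabilise $\tilde\Sigma_1$. By the triangle inequality, to prove that $B_{r(a)}(\tilde p)$ projects injectively to $\Sigma_1$ it suffices to show that $d(\tilde p,\phi\tilde p)\ge 2r(a)$ for every nontrivial $\phi$ in the stabiliser of $\tilde\Sigma_1$. Assume for contradiction that some such $\phi$ satisfies $d(\tilde p,\phi\tilde p)<2r(a)$; then $\phi\tilde\alpha$ is a second perpendicular geodesic arc of length $a$ from $\tilde\Sigma_1$ to the lift $\phi\tilde\Sigma_2$ of $\Sigma_2$.

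The case $\phi\tilde\Sigma_2=\tilde\Sigma_2$ is dealt with by uniqueness of the common perpendicular: in strictly negative curvature on a Cartan--Hadamard manifold the distance function to a totally geodesic submanifold is strictly convex along geodesics not asymptotic to it, so $d(\cdot,\tilde\Sigma_2)|_{\tilde\Sigma_1}$ attains its positive infimum at the unique critical point $\tilde p$. Consequently $\phi\tilde\alpha=\tilde\alpha$ as oriented segments and $\phi\tilde p=\tilde p$, contradicting the freeness of the deck action. Otherwise $\phi\tilde\Sigma_2\ne\tilde\Sigma_2$, and being distinct lifts of the embedded $\Sigma_2$ they are disjoint. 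The contradiction then comes from showing that $d(\tilde p,\phi\tilde p)<2r(a)$ nevertheless forces the two disjoint lifts $\phi\tilde\Sigma_2$ and $\tilde\Sigma_2$ to intersect.

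This last intersection claim is the main obstacle, and I would prove it by comparison with the model space $\mathbb{H}^n$ of constant curvature $-1$, available under the pinching bound $K\ge -1$. A two-dimensional computation in Fermi coordinates around $\tilde\Sigma_1^{\mathbb H}$ with metric $dt^2+\cosh^2(t)\,g_{\tilde\Sigma_1^{\mathbb H}}$, together with the identity $\sinh(r(a))\sinh(a)=1$ that defines $r(a)$, shows that in $\mathbb{H}^n$ any two totally geodesic hyperplanes orthogonal to length-$a$ normal arcs based at points of $\tilde\Sigma_1^{\mathbb H}$ at distance strictly less than $2r(a)$ must intersect. To transfer this to $\tilde N$, one works in global Fermi coordinates around $\tilde\Sigma_1$, available because the normal exponential map of a totally geodesic submanifold in a Cartan--Hadamard manifold has no focal points, and uses the Rauch/Jacobi field comparison $|J(t)|\le\cosh(t)|J(0)|$ for orthogonal Jacobi fields satisfying $J'(0)=0$ (the initial condition dictated by the totally geodesic character of $\tilde\Sigma_1$), valid under $K\ge -1$. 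This ensures that the lateral spread of the two perpendiculars at height $t=a$ is no greater in $\tilde N$ than in $\mathbb{H}^n$, so the hyperbolic intersection obstruction persists in $\tilde N$, yielding $\phi\tilde\Sigma_2\cap\tilde\Sigma_2\ne\emptyset$ and the desired contradiction.
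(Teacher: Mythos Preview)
Your overall architecture matches the paper's: lift to the universal cover, reduce to showing $d(\tilde p,\phi\tilde p)\ge 2r(a)$ for every nontrivial $\phi$ stabilising $\tilde\Sigma_1$, and derive a contradiction from $\phi\tilde\Sigma_2\cap\tilde\Sigma_2\ne\emptyset$. The treatment of the case $\phi\tilde\Sigma_2=\tilde\Sigma_2$ via uniqueness of the common perpendicular is fine.

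The gap is in the final paragraph, where you transfer the $\mathbb H^n$ intersection claim to $\tilde N$. The Rauch bound $|J(t)|\le\cosh(t)|J(0)|$ (with $J'(0)=0$, using $K\ge -1$) controls the differential of the normal exponential map of $\tilde\Sigma_1$; equivalently, it bounds the induced metric on the equidistant hypersurfaces $\{t=\mathrm{const}\}$. It says nothing about where the \emph{totally geodesic} hypersurface $\tilde\Sigma_2$ sits in these Fermi coordinates. In $\mathbb H^n$ the hyperplane through $q$ orthogonal to the normal is a very specific graph $t=f(x)$ over $B_{r(a)}(p)$, with $f\to\infty$ at the boundary; your Jacobi comparison gives no control over the analogous graph of $\tilde\Sigma_2$ in $\tilde N$, since that depends on curvature \emph{along} $\tilde\Sigma_2$, not along the normals to $\tilde\Sigma_1$. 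So ``the lateral spread of the two perpendiculars at height $a$ is no greater in $\tilde N$'' bounds $d(\tilde q,\phi\tilde q)$, but this does not yield $\tilde\Sigma_2\cap\phi\tilde\Sigma_2\ne\emptyset$. A secondary issue: you have not argued that $\phi\tilde\alpha$ lies on the same side of $\tilde\Sigma_1$ as $\tilde\alpha$; if $\phi$ swaps the two sides, $\tilde\Sigma_2$ and $\phi\tilde\Sigma_2$ lie in different half-spaces of $\tilde N\setminus\tilde\Sigma_1$ and cannot intersect, so your contradiction evaporates.

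The paper closes exactly this gap with a different comparison tool. It uses the nearest-point projection $\pi\colon\tilde N\to\tilde\Sigma_1$ and an \emph{ideal triangle} comparison (Toponogov with a vertex at infinity, comparing to $\mathbb H^2(-1)$, hence using only $K\ge -1$) to show that $\pi(\tilde\Sigma_2)\supset B_{r(a)}(\tilde p)$: for any geodesic ray in $\tilde\Sigma_2$ from $\tilde q$ one decomposes a right-angled ideal quadrilateral into two ideal triangles and obtains $\sinh a\,\sinh r\ge 1$. The point is that this argument tracks $\tilde\Sigma_2$ itself (via its asymptotic directions) rather than the equidistant foliation. Your Rauch estimate is the wrong comparison for this purpose; what is needed is a statement about the \emph{shadow} of $\tilde\Sigma_2$ on $\tilde\Sigma_1$, and that is what the ideal-triangle lemma delivers.
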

To prove this proposition, we need a triangle comparison theorem for ideal triangles with a right angle. We call a triangle an ideal triangle if it has at least one vertex at infinity. We denote the vertex at infinity by $\infty$.  It is a consequence of the classic Alexandrov-Toponogov triangle comparison theorem \cite[Theorem IX.5.2]{chavel_2006}.

\begin{lemma}[Ideal triangle comparison lemma] \label{lem:triangle-comp}
    Let $N$ be a complete simply connected $n$–\,manifold with sectional curvature bounded below by $\balpha$, where $\balpha\in(-\infty,0)$.
    Given an ideal triangle $T$ in $N$ with vertices $p, q\in N$ and $\infty$ a vertex at infinity, and with a right angle at vertex $q$ ($\underset{T}{\measuredangle}  q=\frac{\pi}{2}$), there exists an ideal triangle $\bar{T}$ in the hyperbolic plane $\mathbb{H}^2(\balpha)$ of constant curvature $\balpha$ with vertices $\bar{p}, \bar{q}\in \mathbb{H}^2(\balpha)$, and $\infty$, and with right angle at $\bar q$ ($\underset{\bar T}{\measuredangle} \bar q=\frac{\pi}{2}$), such that $$\dist(\bar{p}, \bar{q})=\dist(p,q),\quad\text{and}\quad  \underset{\bar T}{\measuredangle}\bar p\le \underset{ T}{\measuredangle}  p.$$ 
    Note that the ideal triangle $\bar T$ is uniquely determined up to isometry. 
\end{lemma}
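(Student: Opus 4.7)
The plan is to approximate the ideal triangle $T$ by a sequence of finite right triangles in $N$, apply the classical Alexandrov--Toponogov comparison, and pass to the limit. Set $d := \dist(p,q)$, and let $\gamma_q$ denote the unit-speed geodesic ray from $q$ to the vertex $\infty$. The hypothesis $\underset{T}{\measuredangle} q = \frac{\pi}{2}$ says $\gamma_q \perp \overline{pq}$ at $q$. For each $n > 0$, set $q_n := \gamma_q(n)$ and form the finite triangle $T_n := p\, q\, q_n$ in $N$; it is a genuine right triangle at $q$ with legs of lengths $d$ and $n$. Letting $\alpha_n := \underset{T_n}{\measuredangle} p$, the definition of the angle at $p$ in the ideal triangle gives $\alpha_n \to \underset{T}{\measuredangle} p$ as $n \to \infty$. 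Analogously, in $\mathbb{H}^2(\balpha)$ take the right triangle $\bar T_n$ with right angle at $\bar q$ and legs of lengths $d$ and $n$; letting $\bar\alpha_n := \underset{\bar T_n}{\measuredangle}\bar p$, one has $\bar\alpha_n \to \underset{\bar T}{\measuredangle}\bar p$ by hyperbolic trigonometry. It therefore suffices to prove $\bar\alpha_n \le \alpha_n$ for all sufficiently large $n$.

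For each such $n$, I will compare three triangles: $T_n$ in $N$, the SSS-comparison triangle $\tilde T_n$ in $\mathbb{H}^2(\balpha)$ (same three side lengths as $T_n$), and $\bar T_n$. Under the lower curvature bound $K \ge \balpha$, the Alexandrov--Toponogov SSS theorem yields that each angle of $\tilde T_n$ is bounded above by the corresponding angle of $T_n$. Writing $\tilde\alpha_n := \underset{\tilde T_n}{\measuredangle}\tilde p$ and $\ell_n := \dist(p,q_n)$, one obtains $\tilde\alpha_n \le \alpha_n$ and the angle at $\tilde q$ in $\tilde T_n$ is at most $\frac{\pi}{2}$. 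The latter, combined with the hyperbolic law of cosines at $\tilde q$, forces $\ell_n \le \bar\ell_n$, where $\bar\ell_n$ is the hypotenuse of $\bar T_n$, determined by the hyperbolic Pythagorean identity $\cosh(\sqrt{|\balpha|}\,\bar\ell_n) = \cosh(\sqrt{|\balpha|}\,d)\cosh(\sqrt{|\balpha|}\,n)$.

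The main obstacle is that $\tilde T_n$ is not the right triangle $\bar T_n$, so SSS Toponogov does not directly yield $\bar\alpha_n \le \alpha_n$. I will close this gap by a monotonicity argument purely inside $\mathbb{H}^2(\balpha)$: the hyperbolic law of cosines at the $\tilde p$- and $\bar p$-vertices gives $\cos\tilde\alpha_n = g(\ell_n)$ and $\cos\bar\alpha_n = g(\bar\ell_n)$, where
\[
g(\ell) := \frac{\cosh(\sqrt{|\balpha|}\,d)\,\cosh(\sqrt{|\balpha|}\,\ell) - \cosh(\sqrt{|\balpha|}\,n)}{\sinh(\sqrt{|\balpha|}\,d)\,\sinh(\sqrt{|\balpha|}\,\ell)}.
\]
A direct computation shows that $g'(\ell)$ has the same sign as $\cosh(\sqrt{|\balpha|}\,n)\cosh(\sqrt{|\balpha|}\,\ell) - \cosh(\sqrt{|\balpha|}\,d)$, which is non-negative for every $\ell > 0$ as soon as $n \ge d$. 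For such $n$, $g$ is monotonically increasing on $[\ell_n, \bar\ell_n]$, so $g(\ell_n) \le g(\bar\ell_n)$ and hence $\bar\alpha_n \le \tilde\alpha_n$.

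Chaining, $\bar\alpha_n \le \tilde\alpha_n \le \alpha_n$ for every $n \ge d$, and letting $n \to \infty$ delivers $\underset{\bar T}{\measuredangle}\bar p \le \underset{T}{\measuredangle} p$. The uniqueness of $\bar T$ up to isometry is immediate, since an ideal right triangle in $\mathbb{H}^2(\balpha)$ is determined by the single datum $d = \dist(\bar p,\bar q)$.
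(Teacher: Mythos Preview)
Your proof is correct and follows the same overall strategy as the paper: approximate $T$ by finite right triangles $T_n$, apply the SSS Alexandrov--Toponogov comparison, and pass to the limit. The difference lies in how you bridge the gap between the SSS comparison triangle $\tilde T_n$ (whose angle at $\tilde q$ is only $\le\pi/2$) and the genuine right triangle $\bar T_n$ with legs $d$ and $n$. The paper does this geometrically: it drops a foot of perpendicular from $\bar s_d$ onto the line through $\bar p_\circ,\bar q_d$ to force a right angle, obtaining a triangle with leg $\le d$, then takes a limit in $\mathbb{H}^2(\balpha)$ and, if the limiting leg is strictly shorter than $d$, extends it (which only decreases the angle at $\bar p$). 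You instead argue trigonometrically: from $\underset{\tilde T_n}{\measuredangle}\tilde q\le\pi/2$ you deduce $\ell_n\le\bar\ell_n$, and then the monotonicity of $g(\ell)$ on $[\ell_n,\bar\ell_n]$ (valid once $n\ge d$) gives $\bar\alpha_n\le\tilde\alpha_n$ directly. Your route has the advantage that at each finite stage you already compare with the right triangle having the correct leg length $d$, so the limit step is entirely straightforward and you avoid the paper's non-collapsing and extension arguments. The paper's route is more pictorial and avoids the derivative computation. Both are clean; yours is a legitimate and slightly more self-contained variant.
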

\begin{proof}
    For $d > 0$, consider the triangle $T_d$ with vertices $p, q$ and $s_d$, where $s_d$ is a point on the side $[q,\infty]$ of $T$ at a distance $d$ of {$q$}.
    Then by the Alexandrov-Toponogov triangle comparison theorem \cite[Theorem IX.5.2]{chavel_2006}, there exists a triangle $\bar{T}_d$ in the hyperbolic plane $\mathbb{H}^2(\balpha)$ with vertices $\bar{p}_\circ, \bar{q}_d,$ and $ \bar{s}_d$ such that
    $$\dist(\bar{p}_\circ, \bar{q}_d) = \dist(p, q),\quad \dist(\bar{p}_\circ, \bar{s}_d) = \dist(p, s_d),\quad \dist(\bar q_d,\bar s_d)=\dist(q,s_d)=d;$$
    $$\underset{\bar T_d}{\measuredangle} \bar p_\circ\le \underset{ T}{\measuredangle} p,\qquad \underset{\bar T_d}{\measuredangle} \bar q_d\le \underset{ T}{\measuredangle} q=\frac{\pi}{2},\qquad \underset{\bar T_d}{\measuredangle} \bar s_d\le \underset{ T}{\measuredangle} s_d.$$
    \begin{figure}[ht]
    \centering
    
        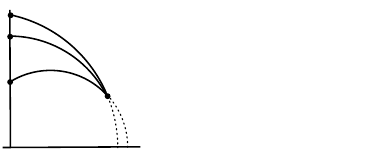
   \vspace{3mm}
   
    \caption{ 
    Picture (a) illustrates the triangles $\bar T_d$ and $\bar T'_d$, and picture (b) the triangles  $\bar T_
    \infty$ and $\bar T$. }
    \label{fig:3}
\end{figure}
    
    Since $\underset{\bar T_d}{\measuredangle}\bar{q}_d \leq \frac{\pi}{2}$ and $\underset{\bar T_d}{\measuredangle} \bar p_\circ<\frac{\pi}{2}$, we can find a point $\bar{q}'_d$ on the side $[\bar{p}_\circ, \bar{q}]$ of $\bar{T}_d$ such that the triangle $\bar{T}'_d$ with vertices $\bar{p}_\circ$, $\bar{q}'_d$, and $\bar{s}_d$ has a right angle at $\bar q'_d$ ($\underset{\bar T'_d}{\measuredangle}\,\bar{q}'_d=\frac{\pi}{2}$), see Figure~\ref{fig:3}\,(a). Then by construction,
    \begin{align*}
        \dist(\bar{p}_\circ, \bar{q}'_d) \leq \dist(\bar{p}_\circ, \bar{q}_d) \leq \dist(p, q),\qquad \underset{\bar T'_d}{\measuredangle} \bar p_\circ=\underset{\bar T}{\measuredangle} \bar p_\circ\le \underset{ T}{\measuredangle} p.
    \end{align*}
    We now show that when $d \to \infty$,  $\bar{T}'_d$ converges to an ideal triangle $\bar{T}_\infty$ with vertices $\bar p_\circ$, $\bar q=\displaystyle{\lim_{d\to\infty} \bar q'_d}$, and $\infty=\displaystyle{\lim_{d\to\infty} }\bar s_d$.\\
    Since $\dist(\bar{p}_\circ, \bar{s}_d)=\dist({p}, {s}_d)$ goes to $\infty$ as $d\to\infty$, the triangle $\bar{T}'_d$ should go to an ideal triangle as $d \to \infty$. The only problem that can arise is $\bar{q}'_d$ converging to $\bar{p}_\circ$ as $d \to \infty$, making the triangle collapse. But as  $\underset{\bar T_d}{\measuredangle}\bar{q}'_d=\frac{\pi}{2}$ for any $d$, and  $\underset{\bar T_d}{\measuredangle}\bar{p}<\frac{\pi}{2}$, we conclude that $\bar q\ne \bar p_\circ$ and $\bar T_\infty$ is an ideal triangle with \begin{align*}
        \dist(\bar{p}_\circ, \bar{q}) \leq \dist(p, q),\qquad \underset{\bar T_\infty}{\measuredangle} \bar p_\circ\le \underset{ T}{\measuredangle} p,\qquad \underset{\bar T_\infty}{\measuredangle} \bar q=\frac{\pi}{2}.
    \end{align*} 
  If $\dist(\bar{p}_\circ, \bar{q}) = \dist(p, q)$, then $\bar T_\infty$ is the desired ideal triangle. Otherwise,  let $\bar p\in \mathbb{H}^2(\balpha)$ be such that the geodesic segment  $[\bar q, \bar p_\circ]$ is a subset of the geodesic segment $[\bar q,\bar p]$ and $\dist(\bar p,\bar q)=\dist(p,q)$. Consider the ideal triangle  $\bar T$ with vertices $\bar p$, $\bar q$, and $\infty$. As illustrated in Figure~\ref{fig:3}\,(b), we have $\underset{\bar T}{\measuredangle} \bar p<\underset{\bar T_\infty}{\measuredangle} \bar p_\circ$. Hence, $\bar T$ is the desired ideal triangle. It is clear that it is unique up to isometry. 
\end{proof}

\begin{proof}[Proof of Proposition \ref{prop:r(a)}]
    Let $\tilde{N}$ be the universal cover of $N$ and let $\tilde{\Sigma}_{1}$ and $\tilde{\Sigma}_{2}$ 
    be connected components of lifts of ${\Sigma}_{1}$ and ${\Sigma}_{2}$ connected by a lift $\tilde{\alpha}$ of $\alpha$. Since $N$ has negative curvature and $\Sigma_i$, $i=1,2$  are totally geodesic, the universal covering map from 
    $\tilde{N}$ to $N$ restricts  to 
    be the universal covering map from  $\tilde{\Sigma}_i$  
    to $\Sigma_{i}$. Now, $\alpha$ determines a
non-trivial relative free homotopy class of arc  in $N$ from $\Sigma_1$ to $\Sigma_2$ . Here relative means the end  points of the arc are allowed to
move in $\Sigma_1$ and $\Sigma_2$, resp. This translates to the fact that the two lifts $\tilde{\Sigma}_1$ and $\tilde{\Sigma}_2$  are disjoint, for otherwise 
$\tilde{\Sigma}_1=\tilde{\Sigma}_2$ and 
there would be a geodesic in $\tilde{N}$ that starts and ends perpendicular to  $\tilde{\Sigma}_1$ violating the fact that there are no geodesic bigons in a simply connected negatively curved space.
    
    Let  $\pi: \tilde{N} \to \tilde{\Sigma}_1$ be the projection map that maps $x\in \tilde N$ to  $\pi(x)\in \tilde \Sigma_1$  the base of the unique geodesic orthogonal to $\tilde{\Sigma}_1$ and passing through {$x$}. The map $\pi$ is {distance non-increasing}. {The domain of $\pi$ can be extended to the boundary at infinity of $\tilde N$; any point on the boundary at infinity $\partial_\infty\tilde N$ of $\tilde N$ can be uniquely mapped to a point in $\tilde \Sigma$.}  Moreover $\pi(\tilde{\Sigma}_{2})$ is diffeomorphic to $\tilde{\Sigma}_2$, see e.g. \cite{BO69} and \cite[Proposition 2.4]{BH99} for more details.
    
    First, we show that $\pi(\tilde{\Sigma}_2)$ contains a ball whose radius depends only on the distance between $\tilde{\Sigma}_1$ and $\tilde{\Sigma}_2$. Second, we prove that this ball descends to an embedded ball in $\Sigma_1$.

    Let $p \in \tilde{\Sigma}_1$ and $q\in \tilde{\Sigma}_2$ be the endpoints of $\tilde{\alpha}$ in $\tilde{\Sigma}_1$ and $\tilde{\Sigma}_2$ respectively. For any geodesic half-ray $\tilde\beta:[0,\infty)\to\tilde{\Sigma}_2$ with $\tilde\beta(0)=q$,  we give a lower bound for $r := \dist(p,\pi\circ\tilde \beta(\infty)))$.

Since $\tilde{\Sigma}_1$ and $\tilde \Sigma_2$ are at a positive distance from each other, it implies that $r$ should be finite.

    We consider a geodesic quadrilateral as shown in Figure \ref{fig:1} (a).   We cut it by a geodesic from $p$ to the point at infinity to obtain two ideal triangles $I$ and $\II$, with $r=\dist(p,\pi\circ\tilde \beta(\infty))$ being the length of the finite edge of {triangle}  $I$ and $a$ the length of the finite edge of {triangle}~$\II$, as in Figure \ref{fig:1} (b). Let $\phi$ be the angle at $p$ in $I$, so that the angle at $p$ in {triangle} $\II$ is $\frac{\pi}{2} - \phi$.
    
\begin{figure}[h]
    \centering
    \def\svgwidth{0.35\textwidth}
\begingroup%
  \makeatletter%
  \providecommand\color[2][]{%
    \errmessage{(Inkscape) Color is used for the text in Inkscape, but the package 'color.sty' is not loaded}%
    \renewcommand\color[2][]{}%
  }%
  \providecommand\transparent[1]{%
    \errmessage{(Inkscape) Transparency is used (non-zero) for the text in Inkscape, but the package 'transparent.sty' is not loaded}%
    \renewcommand\transparent[1]{}%
  }%
  \providecommand\rotatebox[2]{#2}%
  \newcommand*\fsize{\dimexpr\f@size pt\relax}%
  \newcommand*\lineheight[1]{\fontsize{\fsize}{#1\fsize}\selectfont}%
  \ifx\svgwidth\undefined%
    \setlength{\unitlength}{371.50385338bp}%
    \ifx\svgscale\undefined%
      \relax%
    \else%
      \setlength{\unitlength}{\unitlength * \real{\svgscale}}%
    \fi%
  \else%
    \setlength{\unitlength}{\svgwidth}%
  \fi%
  \global\let\svgwidth\undefined%
  \global\let\svgscale\undefined%
  \makeatother%
  \begin{picture}(1,0.9122812)%
    \lineheight{1}%
    \setlength\tabcolsep{0pt}%
    \put(0,0){\includegraphics[width=\unitlength,page=1]{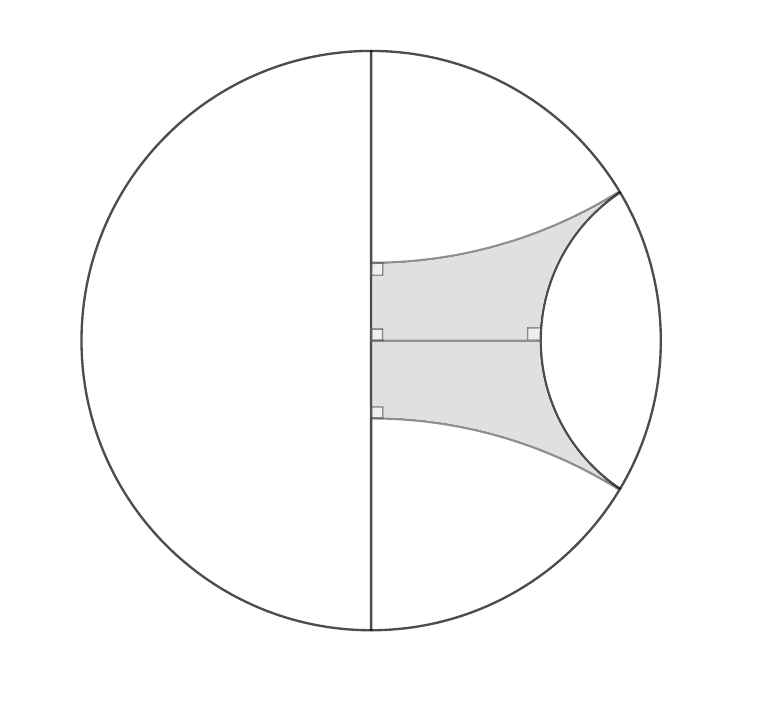}}%
    \put(0.40223412,0.66670459){\color[rgb]{0,0,0}\makebox(0,0)[lt]{\lineheight{1.25}\smash{\begin{tabular}[t]{l}$\tilde{\Sigma}_1$\end{tabular}}}}%
    \put(0.56472247,0.48497061){\color[rgb]{0,0,0}\makebox(0,0)[lt]{\lineheight{1.25}\smash{\begin{tabular}[t]{l}$\tilde{\alpha}$\end{tabular}}}}%
    \put(0.72483051,0.46228312){\color[rgb]{0,0,0}\makebox(0,0)[lt]{\lineheight{1.25}\smash{\begin{tabular}[t]{l}$\tilde{\Sigma}_2$\end{tabular}}}}%
    \put(0.43188148,0.40425377){\color[rgb]{0,0,0}\makebox(0,0)[lt]{\lineheight{1.25}\smash{\begin{tabular}[t]{l}$r$\end{tabular}}}}%
      \put(0.4351757,-0.03823197){\color[rgb]{0,0,0}\makebox(0,0)[lt]{\lineheight{1.25}\smash{\begin{tabular}[t]{l}(a)\end{tabular}}}}%
  \end{picture}%
\endgroup%

    \def\svgwidth{0.45\textwidth}
    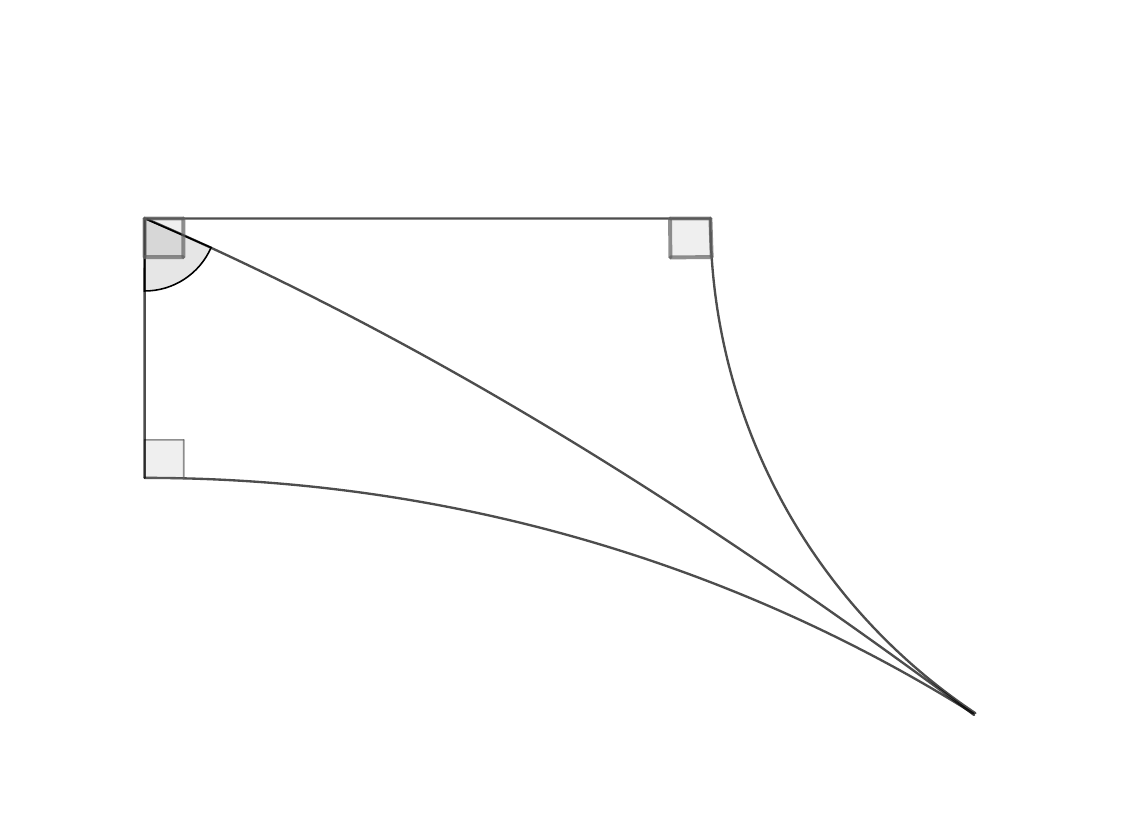
    \caption{(a) An illustration of the projection of $\tilde \Sigma_2$ under map $\pi$. (b) Triangles $I$ and $\II$.}
    \label{fig:1}
\end{figure}

    We apply  Ideal Triangle Comparison Lemma \ref{lem:triangle-comp} to triangle $I$ to obtain a right angle ideal triangle $\bar{I}$ in the hyperbolic plane $\mathbb{H}^2$ with side length $\bar{r} = r$ and angle $\bar{\phi}$ at $\bar{p}$ satisfying $\bar{\phi} \leq \phi$. Using the trigonometric identity $\sinh(\bar{r}) = \cot(\bar{\phi})$  in the hyperbolic plane \cite[Theorem 2.2.2]{Bus92}, we get
    \begin{align}\label{inq:I}
        \sinh(r) = \sinh(\bar{r}) = \cot(\bar{\phi}) {\geq} \cot(\phi).
    \end{align}

    Applying  Ideal Triangle Comparison Lemma \ref{lem:triangle-comp} to triangle $\II$, we have a right angle ideal triangle $\bar{\II}$ in the hyperbolic plane with a side length $\bar{a} \leq a$ and angle $\bar{\phi}'$ at $\bar{p}$ satisfying $\bar{\phi}' \leq \frac{\pi}{2} - \phi$. Similarly, by a trigonometric identity in the hyperbolic plane, we get 
    \begin{align}\label{inq:II}
        \sinh(a) \geq \sinh(\bar{a}) = \cot\left(\bar{\phi}'\right) \geq \cot\left(\frac{\pi}{2} - \phi\right) = \tan \phi.
    \end{align}

    Multiplying inequalities \eqref{inq:I} and \eqref{inq:II} gives us 
    \begin{align*}
        \sinh a \sinh r \geq 1
    \end{align*}
    and therefore
    \begin{align*}
        r \geq r(a) := \sinh^{-1}\left(\frac{1}{\sinh a}\right)=\log\coth\frac{a}{2}.
    \end{align*}

    Recall that the geodesic half-ray $\tilde\beta$  is orthogonal to {$\tilde \alpha$} and arbitrary. Therefore, $\pi(\tilde{\Sigma}_2)$ must contain a ball $B_{r(a)}$ in $\tilde{\Sigma}_1$ centered at $p$ of radius $r(a)$.
    
    We claim that this ball projects from the universal cover $\tilde{N}$ to an embedded ball of radius $r(a)$ in $\Sigma_1 \subset N$. Let $G$ be the group of deck transformation of $\tilde{N} \to N$. If $B_{r(a)}$ does not embed in $\Sigma_1$, there exists $\gamma \in G \setminus \{1\}$ such that $\gamma B_{r(a)} \cap B_{r(a)} \neq \emptyset$. Since $\Sigma_1$ is embedded, $\gamma$ must be in the stabilizer of $\tilde{\Sigma}_1$, i.e. $\gamma\tilde{\Sigma}_1 = \tilde{\Sigma}_1$. Since $\gamma\neq1$, we should have $\gamma \tilde{\Sigma}_2  \cap \tilde{\Sigma}_2=\emptyset$, see e.g. \cite[\textsection8]{BGS85}. However,  since $\emptyset\neq\gamma B_{r(a)} \cap B_{r(a)} \subset\pi(\gamma \tilde{\Sigma_{2}} 
    \cap \tilde{\Sigma}_2)$, we must have $\gamma \tilde{\Sigma}_2  \cap \tilde{\Sigma}_2 \neq \emptyset$ and we get a contradiction. 
\end{proof}

\begin{proof}[Proof of Theorem \ref{thm:tubeII}]
    We first show the existence of the tubular neighborhood around $\Sigma$. If $\exp^\perp : (-\tube_{n,\kappa}(A), \tube_{n,\kappa}(A)) \times \Sigma \to N$ is not a diffeomorphism, then we can find a geodesic arc $\gamma : [0,T] \to N, \gamma(t) = \exp^\perp(t,p)$ for some $p \in \Sigma$ and some $0 < T < 2 \tube_{n,\kappa}(A)$, such that $\gamma(T) \in \Sigma$ and is orthogonal to $\Sigma$ at $\gamma(T)$. Since $$\length(\gamma)=T <2 \tube_{n, \kappa}(A),$$  by  Proposition \ref{prop:r(a)} and the fact that $r(\cdot)$ is a decreasing function, there exists an embedded ball $B_{r(T)}$ in {$\Sigma$} of radius $r(T) > r(2 \tube_{n,k}(A))$.

    By the G\"unther-Bishop volume comparison theorem \cite[Theorem III.4.2]{chavel_2006}, we have
    \begin{align*}
        \vol(B_{r(T)}) \geq V_{n-1,\kappa}(r(T))) > V_{n-1,\kappa}(r(2\tube_{n,k}(A))).
    \end{align*}
    Recall that $\tube_{n,\kappa}(A) = \frac{1}{2}(V_{n-1,\kappa} \circ r)^{-1}(A)$ and  we get the contradiction
    \begin{align*}
        A \geq \vol(B_r) > (V_{n-1,\kappa} \circ r)(V_{n-1,\kappa} \circ r)^{-1}(A) = A.
    \end{align*}
    Hence, $\exp^\perp : (-\tube_{n,\kappa}, \tube_{n,\kappa}) \times \Sigma \to N$ is a diffeomorphism.

    We now prove that the tubular neighborhood of any two disjoint embedded totally geodesic hypersurfaces, say $\Sigma_1$ and $\Sigma_2$, are disjoint. If $d$ is the length of the shortest geodesic arc from $\Sigma_1$ to $\Sigma_2$ and orthogonal to both, we have by Proposition \ref{prop:r(a)}
    \begin{align*}
        (V_{n-1,\kappa} \circ r)(d) \leq \vol(\Sigma_j) \quad \text{for } j = 1,2
    \end{align*}
    and thus
    \begin{align*}
        d &\geq \frac{1}{2}(V_{n-1,\kappa} \circ r)^{-1}(\vol(\Sigma_1)) + \frac{1}{2}(V_{n-1},\kappa \circ r)^{-1}(\vol(\Sigma_2)) \\
        &= \tube_{n,\kappa}(\vol(\Sigma_1)) + \tube_{n,\kappa}(\vol(\Sigma_2)).
    \end{align*}
    We conclude the corresponding tubular neighborhoods must be disjoint.
\end{proof}

\begin{remark}\label{rem:wn}\begin{itemize}
    \item[(a)] In the proof of Theorem \ref{thm:tubeII}, we use the G\"unther-Bishop  volume comparison theorem on $\Sigma$ for which only the upper bound for sectional curvature of $\Sigma$ is needed. Hence, if the upper bound of the sectional curvature of the totally geodesic embedded hypersurface is $-\alpha^2$, $\alpha\in(\kappa,1]$, then $\kappa$ can be replaced with $\alpha$. In particular when  $\Sigma$ has constant sectional curvature $-1$, we can improve the width of tubular neighborhood of $\Sigma$ to $\tube_n(A)$. 
   \item[(b)] When an embedded closed totally geodesic hypersurface $\Sigma$ of volume $A$ is separating, i.e. $M\setminus\Sigma$ has two connected components, then we can improve the width of the tubular neighborhood of $
   \Sigma$ to $\tube_{n,\kappa}(\frac{A}{2})$.
    \end{itemize}
\end{remark}

 We now study the growth rate of the volume of the tube of width $\tube_{n,\kappa}(A)$ around an embedded totally geodesic hypersurface $\Sigma$. We denote it by $\Vtube_{n,\kappa}(\Sigma)$. When $\kappa=1$, we omit $\kappa$, $\Vtube_n(\Sigma)=\Vtube_{n,\kappa}(\Sigma)$.  In the hyperbolic setting, we refer to \cite[Lemma 3.1]{Bas94} for some explicit formulas for $\tube_n(x)$ and $\Vtube_n$ and their asymptotic behavior. 
 
In the Fermi coordinates based on $\Sigma$ (as given by $\exp^\perp$ in Theorem \ref{thm:tubeII}, see e.g. \cite[III.6]{chavel_2006}),  $\Vtube_{n,\kappa}(\Sigma)$ is given by
      \begin{align*}
          \Vtube_{n,\kappa}(\Sigma) =  \int_{-\tube_{n,\kappa}(A)}^{\tube_{n,\kappa}(A)} \int_{\Sigma}A(t,x)  \dd v_{\Sigma} \dd t,
      \end{align*}
      where $A(t,x) = |\det (\dd \exp^\perp(t,x))|$ with $\dd \exp^\perp$ the differential of the exponential map defined in Theorem \ref{thm:tubeII}. 
  Since the  sectional curvature of $N$ is upper bounded by  $-\kappa^2$, by a version of the G\"unther-Bishop volume  comparison theorem for tubes (see \cite[Lemmas 8.22 and 8.24]{G03} and \cite{HK78}), we have 
      \begin{align*}
          \cosh^{n-1}(\kappa t)\le A(t,x)\le \cosh^{n-1}(t), \qquad t\in(-\tube_{n,\kappa}(A), \tube_{n,\kappa}(A)).
      \end{align*}
Therefore,
      \begin{equation}\label{inq:vgrowth}
    2A \int_{0}^{\tube_{n,\kappa}(A)}  \cosh^{n-1}(\kappa t)\dd t\le \Vtube_{n,\kappa}(\Sigma)\le \Vtube_n(\Sigma)=  2A \int_{0}^{\tube_{n}(A)}  \cosh^{n-1}( t)\dd t. 
      \end{equation}
We use the inequality $\tube_{n,\kappa}(A)\le \tube_{n}(A)$ to get the second inequality. \\
      The following lemma gives a lower bound for the width function  $\tube_{n,\kappa}$. This lower bound will be also used in Section \ref{sec:stekbound} to get bounds on the Steklov eigenvalues.  
\begin{lemma} \label{lem:cn}
    For any integer $n \geq 3$ and a positive constant $C$ there exists a positive constant   $c_{1}(n,\kappa,C)$ such that for every $x \ge C$ the following inequality holds.
    \begin{align}\label{inq:cn}
        \tube_{n, \kappa}(x) &\geq  c_{\countc}(n,\kappa,C)  x^{-\frac{1}{\kappa(n-2)}}.
    \end{align}
\end{lemma}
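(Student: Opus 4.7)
The plan is to set $y = 2\tube_{n,\kappa}(x)$, so that the defining relation for $\tube_{n,\kappa}$ reads $V_{n-1,\kappa}(r(y)) = x$, and then to invert this relation using a power-law lower bound on $V_{n-1,\kappa}(r(y))$ in terms of $y$ for $y$ small.

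The first ingredient is a large-$r$ asymptotic: from the polar-coordinate expression for the volume of a ball in $\mathbb{H}^{n-1}(-\kappa^2)$ together with the elementary inequality $\sinh(t) \ge \tfrac14 e^{t}$ valid for $t \ge 1$, one obtains constants $C_1(n,\kappa) > 0$ and $r_0(n,\kappa) > 0$ with
\[V_{n-1,\kappa}(r) \ge C_1(n,\kappa)\, e^{\kappa(n-2)r} \qquad\text{for every } r \ge r_0.\]
The second ingredient is the bound $\coth(y/2) \ge 2/y$ for $y > 0$ (equivalent to $\tanh(y/2) \le y/2$), which gives $e^{r(y)} \ge 2/y$. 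Choose $y_0 = y_0(n,\kappa)$ small enough that $y \le y_0$ forces $r(y) \ge r_0$. Then for all $y \in (0, y_0]$ the two estimates combine into
\[V_{n-1,\kappa}(r(y)) \ge C_1(n,\kappa)\, 2^{\kappa(n-2)}\, y^{-\kappa(n-2)} =: C_2(n,\kappa)\, y^{-\kappa(n-2)}.\]
Setting $V_{n-1,\kappa}(r(y)) = x$ and solving for $y$ yields
\[2\tube_{n,\kappa}(x) = y \ge C_2^{1/(\kappa(n-2))}\, x^{-1/(\kappa(n-2))}\]
for every $x \ge x_0 := V_{n-1,\kappa}(r(y_0))$, with a constant depending only on $n$ and $\kappa$. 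This settles the large-$x$ regime.

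For the remaining window $C \le x \le x_0$ (empty if $C \ge x_0$), the monotonicity of $\tube_{n,\kappa}$ gives $\tube_{n,\kappa}(x) \ge \tube_{n,\kappa}(x_0) > 0$, a constant depending only on $n$ and $\kappa$, while $x^{-1/(\kappa(n-2))} \le C^{-1/(\kappa(n-2))}$. Choosing $c_1(n,\kappa,C)$ to be the minimum of $\tfrac12 C_2^{1/(\kappa(n-2))}$ and $\tube_{n,\kappa}(x_0)\,C^{1/(\kappa(n-2))}$ gives the claimed inequality on the full range $x \ge C$. The only real calculation is establishing the exponential asymptotic of $V_{n-1,\kappa}$; the main (minor) obstacle is the bookkeeping needed to merge the large-$x$ and bounded-$x$ regimes cleanly.
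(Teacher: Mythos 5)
Your proof is correct and follows essentially the same route as the paper's: both bound $V_{n-1,\kappa}(r(y))$ from below using the exponential growth of $\sinh$ together with the elementary estimate $\coth(y/2)\ge 2/y$, invert to get the power law for large $x$, and then patch in the bounded range $C\le x\le x_0$ by monotonicity of $\tube_{n,\kappa}$. The only cosmetic difference is that you isolate a clean exponential lower bound $V_{n-1,\kappa}(r)\ge C_1 e^{\kappa(n-2)r}$ before inverting, whereas the paper carries out the integral explicitly and inverts the resulting closed-form expression before passing to the asymptotic; the two are equivalent.
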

\begin{remark}\label{rem:lowerv}
The domain of $\tube_{n,\kappa}$ is usually the volume of a $(-\kappa^2)$–\,pinched negatively curved $(n-1)$–\,manifold which is bounded below by $V_{n-1,\kappa}(\mu(n-1))$, where $\mu(n-1)$ is the Margulis constant, see inequality \eqref{inq:lowerv}. Hence,  we can take $C$  equal to $V_{n-1,\kappa}(\mu(n-1))$ in the above lemma.  
\end{remark}

\begin{proof}[Proof of Lemma \ref{lem:cn}]
    We first find a lower bound for $V_{n-1, \kappa} (r(2y))$ which is the inverse of $\tube_{n,\kappa}$. 
    For any $y$ such that $\coth {y} > 2^{1/2\kappa}$, we have
    \begin{align*}
        V_{n-1, \kappa} (r(2y)) &= \omega_{n-2} \int_0^{\log \coth y} \left(\frac{\sinh(\kappa t)}{\kappa}\right)^{n-2} \dd t \\
        &\geq \omega_{n-2} \int_{\frac{\log 2}{2\kappa}}^{\log \coth y} \left(\frac{\sinh(\kappa t)}{\kappa}\right)^{n-2} \dd t \\
        &\geq \omega_{n-2} \int_{\frac{\log 2}{2\kappa}}^{\log \coth y} \left(\frac{e^{\kappa t}}{4\kappa}\right)^{n-2} \dd t \\
        &= \frac{\omega_{n-2}}{(n-2)4^{n-2} \kappa^{n-1}} \left( (\coth y)^{\kappa(n-2)} - 2^{\frac{n-2}{2}}\right),
    \end{align*}
     where $\omega_{n-2}$ is the volume of the $(n-2)$–\,dimensional unit sphere in $\mathbb{R}^{n-2}$. \\
  Applying $\tube_{n,\kappa}$ to both sides of the inequality, setting the last term equal to $x$, and solving for $y$ using that $\mathrm{arcoth}(z) = \frac{1}{2} \log\left(1 + \frac{2}{z - 1}\right)$, gives
\[\tube_{n, \kappa}(x)\ge y=\log\left(1+ \frac{2}{\left(\frac{(n-2)4^{n-2}}{\omega_{n-2}}\kappa^{n-1} x + 2^{\frac{n-2}{2}}\right)^{\frac{1}{\kappa(n-2)}} - 1}\right).\]
Since $\tube_{n,\kappa}$ is a decreasing function, the lower bound holds for all $x> 0$.
    Taking into account the asymptotic behavior of the right-hand side as $x$ goes to infinity and the fact that it is decreasing in $x$,  there exists $x_0>0$ such that for all $x\ge x_0$ we have

     \begin{align*}
        \tube_{n, \kappa}(x) &\geq c_{\countc}(n,\kappa)  x^{-\frac{1}{\kappa(n-2)}}.
    \end{align*}
    To get the desired {inequality \eqref{inq:cn}}, we take $$c_1(n,\kappa)=\min\{\tube_{n,\kappa}(C)C^{\frac{1}{\kappa(n-2)}},c_2(n,\kappa)\},$$
    where $C$ is the lower bound for $x$ given in the statement of the lemma.
\end{proof}
\begin{lemma}\label{lem:vol}
    Let $\Sigma$ be a closed totally geodesic hypersurface of volume $A$ in $N$. Then
    \[c_{\countc}(n,\kappa)A^{1-\frac{1}{\kappa(n-2)}}\le \Vtube_{n,\kappa}(\Sigma)\le \Vtube_n(\Sigma)\le c_{\countc}(n)A^{1-\frac{1}{n-2}}.\]
  In particular,  $\displaystyle{\lim_{A\to \infty}\Vtube_{n,\kappa}(\Sigma)}=\infty$  when $\kappa\in (\frac{1}{n-2},1]$, and  $\displaystyle{\lim_{A\to \infty}\Vtube_{n,\kappa}(\Sigma)\ge c_5(n,\kappa)}$  for $\kappa=\frac{1}{n-2}$.

    \end{lemma}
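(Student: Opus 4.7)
My plan is to extract both bounds directly from the two-sided estimate~\eqref{inq:vgrowth} already in the excerpt, coupled with pointwise control of $\tube_{n,\kappa}(A)$. The lower half will follow from Lemma~\ref{lem:cn} as stated; the upper half requires a companion polynomial \emph{upper} bound on $\tube_n(A)$, obtained by the same Fermi--coordinate manipulation but with the inequalities reversed.

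For the lower bound, I apply $\cosh(\kappa t)\ge 1$ to the leftmost quantity of~\eqref{inq:vgrowth}, which gives $\Vtube_{n,\kappa}(\Sigma)\ge 2A\,\tube_{n,\kappa}(A)$. Since $\Sigma$ is totally geodesic, the Gauss equation forces its sectional curvature into $[-1,-\kappa^2]$, so $\Sigma$ is itself a closed $(-\kappa^2)$--pinched $(n-1)$-manifold, and Margulis' lemma (as in Remark~\ref{rem:lowerv}) gives the lower bound $A\ge V_{n-1,\kappa}(\mu(n-1))$. Taking this quantity as the constant $C$ in Lemma~\ref{lem:cn}, one obtains $\tube_{n,\kappa}(A)\ge c_1(n,\kappa)A^{-1/(\kappa(n-2))}$, and multiplying through yields $\Vtube_{n,\kappa}(\Sigma)\ge c_2(n,\kappa)A^{1-1/(\kappa(n-2))}$.

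For the upper bound I will first show that $w:=\tube_n(A)$ is bounded by a constant depending only on $n$. A quick calculus check that $\tfrac{d}{d\kappa}[\sinh(\kappa s)/\kappa]\ge 0$ shows $\kappa\mapsto V_{n-1,\kappa}(\mu)$ is monotone increasing on $(0,1]$ for fixed $\mu$, so the previous lower bound on $A$ improves to $A\ge A_\star(n)$, depending only on $n$, and hence $w\le \tube_n(A_\star(n))=:w_\star(n)$. Since $\cosh^{n-1}$ is uniformly bounded on $[0,w_\star(n)]$, the right half of~\eqref{inq:vgrowth} gives
\[
    \Vtube_n(\Sigma)\ \le\ 2A\int_0^{w}\cosh^{n-1}(t)\,\dd t\ \le\ 2A\,\cosh^{n-1}(w_\star(n))\,w.
\]
The remaining step is the companion estimate $w\le c(n)A^{-1/(n-2)}$, which I obtain from the identity $A=V_{n-1}(r(2w))$: splitting the defining integral of $V_{n-1}$ at $s=1$ and using $\sinh(s)\le e^s/2$ on the tail gives $A\le C(n)\coth^{n-2}(w)\le C'(n)w^{-(n-2)}$ on the relevant range $w\le w_\star(n)$, and inverting this produces the stated bound. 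Combining the three estimates gives $\Vtube_n(\Sigma)\le c_3(n)A^{1-1/(n-2)}$.

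The ``in particular'' claims are read straight off the lower bound: when $\kappa\in(1/(n-2),1]$ the exponent $1-1/(\kappa(n-2))$ is positive and $\Vtube_{n,\kappa}(\Sigma)\to\infty$ as $A\to\infty$; when $\kappa=1/(n-2)$ the exponent vanishes and the lower bound reduces to the constant $c_5(n,\kappa):=c_2(n,\kappa)$. The main obstacle I expect is the companion upper bound on $\tube_n(A)$: the $\cosh^{n-1}$ integral grows exponentially for large $w$, so a polynomial bound of the form $c_3(n)A^{1-1/(n-2)}$ can only hold once one confines the analysis to the small-$w$ regime, and this in turn crucially uses the universal Margulis lower bound $A\ge A_\star(n)$, which is available precisely because $\Sigma$, being totally geodesic, inherits the ambient pinching.
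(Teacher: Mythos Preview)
Your argument is correct. For the lower bound you follow essentially the paper's route: both start from inequality~\eqref{inq:vgrowth} and finish with Lemma~\ref{lem:cn}, the only difference being that you use the cruder $\cosh(\kappa t)\ge 1$ to reach $2A\,\tube_{n,\kappa}(A)$ directly, whereas the paper passes through $\cosh^{n-1}(\kappa t)\ge e^{(n-1)\kappa t}/2^{n-1}$ and then $e^x-1\ge x$ to arrive at the same product $A\cdot\tube_{n,\kappa}(A)$ up to constants. For the upper bound, the paper's displayed proof in fact treats only the lower inequality (the upper one being implicitly deferred to \cite{Bas94}), so your derivation---bounding $w=\tube_n(A)$ via the inverse relation $A=V_{n-1}(r(2w))$, the estimate $\sinh s\le e^s/2$, and $\coth w\le \cosh(w_\star)/w$ on the bounded range $w\le w_\star(n)$---actually supplies more than the paper does. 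One minor simplification: your monotonicity computation for $\kappa\mapsto V_{n-1,\kappa}(\mu)$ is correct but not needed, since inequality~\eqref{inq:lowerv} applied to $\Sigma$ already gives $A\ge \omega_{n-1}\mu(n-1)^{n-1}$, a lower bound depending only on $n$.
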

\begin{proof}
    By Lemma \ref{lem:cn} and inequality \eqref{inq:vgrowth}, we get
    \begin{align*}
 \Vtube_{n,\kappa}(\Sigma)
    &\ge 2A\int_0^{\tube_{n,\kappa}(A_j)}\frac{e^{(n-1)\kappa t}}{2^{n-1}}\dd t\\
    &= \frac{A(e^{(n-1)\kappa \tube_{n,\kappa}(A)}-1)}{\kappa(n-1) 2^{n-2}}\\ &\ge \frac{A\tube_{n,\kappa}(A)}{2^{n-2}}\\
    &\ge c_{\countc}(n,\kappa)A^{1-\frac{1}{\kappa(n-2)}}\,.
\end{align*}
\end{proof}
Lemma \ref{lem:vol} does not give any nontrivial lower bound for $\lim_{A\to\infty}\Vtube_{n,\kappa}(\Sigma)$ when $\kappa\in (0, \frac{1}{n-2})$. As for the upper bound, as it is shown in \cite{Bas94}, we have
\[\lim_{A\to \infty}\Vtube_n(\Sigma)=\begin{cases}
    \infty &n\ge 4,\\
    2\pi&n=3.
\end{cases}\]
 
\begin{remark}
    In the hyperbolic case, the growth rate of $\Vtube_n$ can immediately give a linear upper bound for the number of boundary components $b$ in terms of $V$ as in \cite{Bas94}.  
    The linear upper bound for $b$ in terms of $V$ is also a direct  consequence of the following inequality  proved by Zeghib \cite[\S 4.4]{Zeg91}, and Miyamoto  \cite{miy94}. 
    \begin{equation}\label{eq:zm}
    c_{\countc}(n)\vol(\partial M)\le \vol(M). \end{equation}
    
     {However, the growth rate $\Vtube_{n,\kappa}$ does not lead such bound when $\kappa\in(0,\frac{1}{n-2})$.} In the setting of the pinched negatively curved manifolds, Zeghib's result \cite[\S 4.7]{Zeg91}  gives a linear upper bound for $b$ in terms of $V$ since \eqref{eq:zm} remains true with a constant depending on the dimension and the pinching constant, see inequality \eqref{inq:az}.  \\
   
\end{remark}

\section{Steklov eigenvalue bounds}\label{sec:stekbound}

\setcounter{countc}{0}
\subsection{Bounds for $\sigma_b(M)$} \label{sec:stekbound-b}

We first state the comparison theorem between the Steklov eigenvalues $\sigma_k(M)$ and the Laplace eigenvalues $\lambda_k(\partial M)$. It is a consequence of a result  in  \cite{CGH20}, together with applying tubular neighborhood theorem~\ref{thm:tubeII} and Lemma \ref{lem:cn}. 

{We recall that we use Convention \ref{convention} regarding constant coefficients throughout this section.}
 
\begin{proposition}\label{prop:CGH}
Let $M$ be a compact $(-\kappa^2)$–\,pinched negatively curved $n-$manifold with totally geodesic boundary. Let $\lambda_k(\partial M)$ be the $k$th Laplace eigenvalue of $\partial M$ and $A$ be the maximum volume of its boundary components.  Then  we have 
\begin{equation}\label{lowerb}
\sigma_k(M)\ge\frac{\lambda_k(\partial M)}{c_{\countc}(n,\kappa)A^{\frac{1}{\kappa(n-2)}}+\sqrt{\lambda_k(\partial M)}}\,,
    \end{equation}
and 
\begin{equation}\label{upperb}
\sigma_k(M) \le c_{\countc}(n,\kappa)A^{\frac{1}{\kappa(n-2)}}+\sqrt{\lambda_k(\partial M)}\,.
\end{equation}
\end{proposition}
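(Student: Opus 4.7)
The plan is to combine the comparison estimate of Colbois--Girouard--Hassannezhad from \cite{CGH20} with the explicit lower bound on the rolling radius of $\partial M$ provided by the tubular neighborhood theorem of the previous section. The CGH estimate supplies a constant $C$, depending on the dimension, on a curvature bound near $\partial M$, and on the reciprocal of the rolling radius $\rho$ (the distance from $\partial M$ to its cut locus in $M$), for which one extracts the two one-sided inequalities
\[
\sigma_k(M)^2 - C\,\sigma_k(M) \le \lambda_k(\partial M) \le \sigma_k(M)^2 + C\,\sigma_k(M).
\]
The left-hand inequality, via the quadratic formula, yields $\sigma_k \le C + \sqrt{\lambda_k}$, which is \eqref{upperb}. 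The right-hand inequality reads $\sigma_k(\sigma_k + C) \ge \lambda_k$; splitting into the cases $\sigma_k \le \sqrt{\lambda_k}$ and $\sigma_k > \sqrt{\lambda_k}$ (and using $\lambda_k/(\sqrt{\lambda_k}+C) \le \sqrt{\lambda_k}$ in the latter case) produces \eqref{lowerb}, after at most a universal multiplicative adjustment of the constant in front of $A^{1/(\kappa(n-2))}$.

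The geometric content of the statement is therefore concentrated in a lower bound for $\rho$ of the form $c(n,\kappa)\,A^{-1/(\kappa(n-2))}$. To obtain one, I would pass to the double $\double M$, which is a closed $(-\kappa^2)$-pinched negatively curved $n$-manifold thanks to the totally geodesic hypothesis on $\partial M$. In $\double M$ each boundary component $\Sigma_i$ of $M$ becomes a closed embedded totally geodesic hypersurface of $(n-1)$-volume $A_i \le A$. Theorem \ref{thm:tubeII} then produces mutually disjoint two-sided tubular neighborhoods of the $\Sigma_i$ of widths $\tube_{n,\kappa}(A_i) \ge \tube_{n,\kappa}(A)$, using that $\tube_{n,\kappa}$ is monotone decreasing. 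Intersecting with $M$, each $\Sigma_i$ acquires a disjoint one-sided collar of the same width, and hence $\rho \ge \tube_{n,\kappa}(A)$.

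Finally, I would invoke Lemma \ref{lem:cn} with its parameter chosen equal to the universal lower bound on the $(n-1)$-volume of a $(-\kappa^2)$-pinched negatively curved $(n-1)$-manifold recorded in Remark \ref{rem:lowerv}; this yields $\tube_{n,\kappa}(A) \ge c(n,\kappa)\,A^{-1/(\kappa(n-2))}$, hence $1/\rho \le c(n,\kappa)\,A^{1/(\kappa(n-2))}$, and substituting into $C$ produces precisely the form of the constants appearing in \eqref{lowerb} and \eqref{upperb}.

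The main technical point, which I expect to be the principal (and essentially only) obstacle, is extracting from \cite{CGH20} the precise dependence of $C$ on the rolling radius in the form stated above; once that is in hand, the rest of the argument reduces to a direct application of Theorem \ref{thm:tubeII} and Lemma \ref{lem:cn}, and is otherwise routine book-keeping.
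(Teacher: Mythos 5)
Your proposal is correct and follows essentially the same route as the paper: apply the Colbois--Girouard--Hassannezhad comparison inequalities, then bound $1/\roll(M)$ from above via the tubular neighborhood theorem (applied to the boundary components viewed as totally geodesic hypersurfaces in the double $\double M$) together with Lemma~\ref{lem:cn} and Remark~\ref{rem:lowerv}. The only cosmetic difference is the algebra on the CGH step: the paper uses the asymmetric pair $\lambda_k \le \sigma_k^2 + \alpha\sigma_k$ and $\sigma_k \le \beta + \sqrt{\lambda_k}$ with explicit $\alpha = 1 + 1/\roll(M)$ and $\beta = n + 1/\roll(M)$, extracting the lower bound by solving the quadratic and then a further elementary estimate, whereas you package both sides as $|\sigma_k^2 - \lambda_k| \le C\sigma_k$ and handle the lower bound by a two-case argument; both lead to the same inequalities up to the universal adjustment of constants you mention, so this is an equivalent variant rather than a different approach.
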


\begin{proof}
We use the following  inequalities between $\sigma_k(M)$ and $\lambda_k(\partial M)$ established in \cite[Theorem 7]{CGH20}.
\begin{eqnarray*}
\lambda_k(\partial M)\le \sigma_k(M)^2+\alpha \sigma_k(M);\\
\sigma_k(M)\le \beta+\sqrt{\lambda_k(\partial M)},
\end{eqnarray*}
where $\alpha=1+\frac{1}{\roll(M)}$ and $\beta=\frac{1}{\roll(M)}+n$.
Here, $\roll(M)$ of $M$ denotes the distance between $\partial M$ and its cut locus and is called the \textit{rolling radius}. {In other words, it is the largest $T$ such that the map $\exp^\perp : [0, T) \times \del M \to M$ is injective}.
We can view the first inequality as a second-order polynomial in terms of $\sigma_k$. Determining the range of $\sigma_k$ for which the inequality holds leads to
\[\sigma_k(M)\ge\frac{2\lambda_k(\partial M)}{{\alpha}+\sqrt{\alpha^2+4\lambda_k(\partial M)}}\ge \frac{\lambda_k(\partial M)}{{\alpha}+\sqrt{\lambda_k(\partial M)}}.\]
Now, by the tubular neighborhood theorem in the hyperbolic setting   and Lemma \ref{lem:cn}, we have 
\[\roll(M)\ge \tube_{n,\kappa}(A)\ge c_{\countc}(n,\kappa)A^{-\frac{1}{\kappa(n-1)}}.\]
It gives an upper bound for $\alpha$ and $\beta$ and completes the proof. 
\end{proof}
In \cite{CGH20,DL91},  other cases where the lower bounds on $\roll(M)$ can be estimated are discussed.  However, those cases do not cover the setting of negatively curved manifolds with totally geodesic boundary. \\

Having Proposition \ref{prop:CGH}, we can use well-known geometric bounds for the  Laplace eigenvalues on closed negatively curved manifolds to obtain geometric lower bounds for $\sigma_k(M)$, $k\ge b$. 
\begin{proof}[Proof of Theorem \ref{thm:Schoencounterpart}] It is  a consequence of Proposition \ref{prop:CGH}  together with the result of Schoen \cite{Schoen82} which gives a lower bound for the first nonzero Laplace eigenvalue of a closed $(-\kappa^2)$–\,pinched negatively curved $n$–\,manifold of dimension at least 3.

The right-hand side of \eqref{lowerb} is monotone increasing in $\lambda_k(\partial M)$. Thus, any lower bound for $\lambda_b(\partial M)$ will give a lower bound for $\sigma_b(M)$. Let $\Sigma_j$, $1\le j\le b$, be the boundary components of $M$. 
Hence,
\begin{align}\label{inq:schoen1}
    \lambda_b(\partial M)=\min_j\lambda_1(\Sigma_j) \geq \min_j\frac{c_{\countc}(n,\kappa)}{\vol(\Sigma_j)^2} \geq \frac{c_{4}(n,\kappa)}{A^2}. 
\end{align}
where the first inequality is a consequence of the result of Schoen in \cite{Schoen82}. 
We conclude by substituting inequality \eqref{inq:schoen1} into inequality \eqref{lowerb}. 
\end{proof}

\begin{remark}\begin{itemize}
    
\item[a)] The lower bound in Theorem \ref{thm:Schoencounterpart} can be expressed in terms of the volume of $M$ using the following inequality 
\[\vol(M)\ge c_{\countc}(n,\kappa)\vol(\partial M)\]
which is proved by Zeghib \cite{Zeg91}, and independently by Miyamoto \cite[Theorem 4.2]{miy94} when $M$ is a  hyperbolic manifold.
However, replacing $A$ by $V$ gives a weaker lower bound, see Example \ref{ex:smalleigenvalue}. \\
\item[b)]
Buser, Colbois, and Dodziuk \cite[Theorem 3.1]{BCD93} obtained a lower bound for the first Laplace eigenvalue on negatively curved manifolds which is independent of $\kappa$ when $n\ge4$. They also gave examples showing that the lower bound cannot be independent of $\kappa$ when $n\le3$. It is not clear whether a counterpart of Buser-Colbois-Dodziuk's result for $\sigma_b(M)$ holds when the dimension of the boundary is at least four. \\ 
\item[c)] We can obtain the following upper bound for $\sigma_b(M)$ 
\begin{eqnarray*}
   \sigma_b(M)\le   c_{\countc}(n,\kappa)A^{\frac{1}{\kappa(n-2)}}.
\end{eqnarray*}
by substituting Buser's inequality \cite[Theorem 6.2]{Bus82}: {$$\lambda_b(\partial M)=\min_j\lambda_1(\Sigma_j)\le \frac{(n-1)^2}{4}+c_{\countc}(n)\left(\frac{1}{\max_j\vol(\Sigma_j)}\right)^{\frac{2}{n-1}},$$}
{where $\Sigma_j$ are  connected components of $\partial M$,}
in inequality \eqref{upperb}. We refer to \cite[Theorem 4.1]{Has11} for a different type upper bound for $\sigma_k(M)$.\\
When $A$ tends to infinity, the $k$th Laplace eigenvalue of each boundary component remains uniformly bounded for all $k$ less than the volume of that boundary component.     It is an intriguing question whether $\sigma_k(M)$,  $k\le A$, has the same behavior as the Laplace eigenvalues and remains bounded when $A\to \infty$.
\end{itemize}
\end{remark}

We proceed with stating Steklov eigenvalue bounds when $M$ is a compact pinched negatively curved 3-manifold. We need to recall some of the well-known bounds for the Laplace eigenvalues on closed surfaces.

Let $\Sigma$ be a  closed $(-\kappa^2)$–\,pinched surface of genus $g$. For $1\leq k\leq 2g-3$, let $\Ck_k$ be the set of all curves $\gamma$ consisting of a disjoint union of simple closed geodesics dividing $\Sigma$ into $k+1$ components. Let us  define
\begin{equation}\label{def:l_k}
    \ell_k(\Sigma):=\inf_{\gamma\in\Ck_k} \length(\gamma).
\end{equation}
Schoen, Wolpert, and Yau  \cite{SWY80} showed that \begin{equation}\label{inq:swy}
    c_{\countc}(g)\kappa^{3}\ell_k(\Sigma) \leq \lambda_k (\Sigma)\leq c_{\countc}(g)\ell_k(\Sigma),\quad 1\le k\le 2g-3
\end{equation} 
and 
\begin{equation}\label{inq:swy2}
c_{8}(g)\kappa^2\le\lambda_{2g-2}(\Sigma)\le c_{9}(g).
\end{equation}
Note that $c_{9}(g)$ can be replaced by a universal constant thanks to Buser's inequality \cite{Bus82}:
$$\lambda_k(\Sigma)\le\frac{1}{4}+c_{\countc}\frac{k}{2g-2}.$$

We refer to   \cite{burger88,burger90,OS09,GR19,CR22} for some improvement of inequalities \eqref{inq:swy} and \eqref{inq:swy2} for hyperbolic surfaces.

Similar to the proof of Theorem \ref{thm:Schoencounterpart}, 
substituting Schoen-Wolpert-Yau inequalities \eqref{inq:swy} and \eqref{inq:swy2} in the inequalities given in  Proposition  \ref{prop:CGH} and using the Gauss-Bonnet theorem to estimate the volume, we obtain the following inequalities. 
\begin{proposition}\label{SWY3}
Let $M$ by a $(-\kappa^2)$–\,pinched negatively curved 3-manifold  with a connected totally geodesic boundary $\Sigma$ of genus $g$. Then there exist positive constants $c_{\countc}(g,\kappa)$, $c_{\countc}(\kappa)$  and $c_{\countc}$ such that 
\begin{equation}\label{inq:1}
   c_{11}(g)\ell_k(\Sigma) \le \sigma_k(M) \leq c_{12}(\kappa)g^{\frac{1}{\kappa}}+c_{13}\sqrt{\ell_k(\Sigma)} , \qquad 1 \leq k \leq 2g-3,
\end{equation}
and 
\begin{equation}
c_{11}(g,\kappa)\le \sigma_{2g-2}(M) \leq c_{12}(\kappa)g^{\frac{1}{\kappa}}+c_{13}.
\end{equation}
\end{proposition}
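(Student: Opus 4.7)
The plan is to derive the four inequalities by combining Proposition \ref{prop:CGH} (specialized to $n=3$, $b=1$) with the Schoen-Wolpert-Yau bounds \eqref{inq:swy} and \eqref{inq:swy2}, eliminating the boundary volume $A$ via Gauss-Bonnet. I first replace the $A^{1/\kappa}$ appearing in Proposition \ref{prop:CGH} by a quantity depending only on $g$ and $\kappa$. Since $\Sigma$ is a closed $(-\kappa^2)$-pinched surface of genus $g$ with Gaussian curvature satisfying $-1\le K_\Sigma\le -\kappa^2$, Gauss-Bonnet gives $-\int_\Sigma K_\Sigma\,dA_\Sigma=4\pi(g-1)$, so
\[
4\pi(g-1)\;\le\;A\;\le\;\frac{4\pi(g-1)}{\kappa^2},
\]
and therefore $A^{1/\kappa}\le c(\kappa)\,g^{1/\kappa}$. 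Plugging this into Proposition \ref{prop:CGH} (with $n=3$) yields
\[
\frac{\lambda_k(\Sigma)}{c(\kappa)\,g^{1/\kappa}+\sqrt{\lambda_k(\Sigma)}}\;\le\;\sigma_k(M)\;\le\;c(\kappa)\,g^{1/\kappa}+\sqrt{\lambda_k(\Sigma)}.
\]

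To control the denominator in the lower bound, I will use Buser's universal inequality $\lambda_k(\Sigma)\le\frac14+c\cdot\frac{k}{2g-2}$ recalled in the paper. For $k\le 2g-2$ this gives $\lambda_k(\Sigma)\le c$ (a universal constant), so $\sqrt{\lambda_k(\Sigma)}$ is universally bounded, and combined with the Gauss-Bonnet step the denominator satisfies
\[
c(\kappa)\,g^{1/\kappa}+\sqrt{\lambda_k(\Sigma)}\;\le\;c(g,\kappa).
\]
Now the four conclusions follow by inserting \eqref{inq:swy} and \eqref{inq:swy2}. For $1\le k\le 2g-3$, the lower SWY inequality $\lambda_k(\Sigma)\ge c(g)\kappa^{3}\ell_k(\Sigma)$ combined with the above denominator bound gives $\sigma_k(M)\ge c_{11}(g,\kappa)\,\ell_k(\Sigma)$, while the upper SWY inequality $\lambda_k(\Sigma)\le c(g)\ell_k(\Sigma)$ together with $A^{1/\kappa}\le c(\kappa)g^{1/\kappa}$ yields $\sigma_k(M)\le c_{12}(\kappa)g^{1/\kappa}+c_{13}\sqrt{\ell_k(\Sigma)}$. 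For $k=2g-2$, \eqref{inq:swy2} provides $\lambda_{2g-2}(\Sigma)\ge c(g)\kappa^{2}$, which combined with the denominator bound gives $\sigma_{2g-2}(M)\ge c(g,\kappa)$; for the upper bound, $\sqrt{\lambda_{2g-2}(\Sigma)}$ is bounded by a universal constant via Buser, so $\sigma_{2g-2}(M)\le c_{12}(\kappa)g^{1/\kappa}+c_{13}$.

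No step is genuinely hard: the argument is an assembly of already-established ingredients (Proposition \ref{prop:CGH}, Gauss-Bonnet, the Schoen-Wolpert-Yau estimates, and Buser's upper bound). The only subtlety worth flagging is that in the lower-bound direction one must verify that the denominator $c(\kappa)A^{1/\kappa}+\sqrt{\lambda_k(\Sigma)}$ is uniformly controlled in terms of $g$ and $\kappa$ alone; this is precisely what the Gauss-Bonnet bound on $A$ and Buser's universal ceiling on $\lambda_k$ for $k\le 2g-2$ deliver, so that neither the volume of $\Sigma$ nor the size of its Laplace eigenvalues can degrade the Steklov lower bound beyond a factor depending on $(g,\kappa)$.
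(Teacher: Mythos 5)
Your proposal is correct and follows essentially the same route as the paper: combine Proposition~\ref{prop:CGH} (specialized to $n=3$, $b=1$) with the Schoen--Wolpert--Yau inequalities~\eqref{inq:swy}--\eqref{inq:swy2}, using Gauss--Bonnet ($4\pi(g-1)\le A\le 4\pi(g-1)/\kappa^2$) to trade the boundary volume $A$ for the genus $g$, and Buser's universal ceiling on $\lambda_k(\Sigma)$ to control the denominator of the lower bound. The only point worth flagging, which is really an imprecision in the paper's own statement rather than your argument: since the upper SWY bound recalled in~\eqref{inq:swy} has a $g$-dependent constant $c_7(g)$ in front of $\ell_k(\Sigma)$, this proof strategy yields $c_{13}=c_{13}(g)$ in front of $\sqrt{\ell_k(\Sigma)}$ for $1\le k\le 2g-3$ (while for $k=2g-2$ Buser's bound does give a universal $c_{13}$); your write-up inherits this silent dependence just as the paper does.
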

\noindent It is an interesting question whether the upper bound can be independent of $g$. 

In the following section, we prove an alternative lower bound for $\sigma_1(M)$ wherein a function of the total volume of $M$ replaces the role of $\ell_1(\Sigma)$ in \eqref{inq:1}.

When $\partial M$ is not connected,   
let $\Sigma_j$ with genus $g_{j}$, $1\le j\le b$, denote the boundary components of $M$,  $\ell_1(\partial M):= \min_j \ell_1(\Sigma_j)$, and $g:=\max_j g_j$. We have 
\[\sigma_b(M)\ge c_{\countc}(g,\kappa)\ell_1(\partial M).\]

\subsection{Lower bound for $\sigma_1(M)$}\label{subset:sigma1}
\setcounter{countc}{0}
Let $M$ be a $(-\kappa^2)$–\,pinched $n$–\,manifold, where $\kappa\in(0,1]$ and $n\ge 3$, and having  totally geodesic boundary.  Let $b$ be the number of boundary components, and $V$ and $A$ denote the volume of $M$ and the maximum volume of the boundary components respectively. We use $\ostube^{\delta}(\partial M)$, $\delta\in (0,\tube_{n,\kappa}(A)]$, to denote the $\delta$–\,tubular neighborhood of $\partial M$. When $\delta=\tube_{n,\kappa}(A)$, we simply denote it by $\ostube(\partial M)$.
\begin{theorem}\label{thm:specgap}
Under the notations and assumptions stated above, {we have}  
  \begin{equation}\label{eq:pV}
    \sigma_1(M)\ge \frac{c_{1}(n,\kappa)}{bA^{\frac{2n}{\kappa(n-2)}}V\, V_1 },\end{equation}
where {$c_{\countc}(n,\kappa)$ is a positive  constant depending only on $n$ and $\kappa$} such that u $V_1=\vol(\bM\cup \ostube(\partial M))\le V$.
    
\end{theorem}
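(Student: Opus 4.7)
The plan is to combine a uniform mixed Steklov-Dirichlet lower bound on a boundary collar of $M$ with a Dodziuk-Randol type oscillation estimate on the interior, as announced in the introduction. To begin, I would take $f$ to be a first non-zero Steklov eigenfunction normalized by $\int_{\partial M} f = 0$ and $\int_{\partial M} f^2 = 1$, so that $\sigma_1(M) = \int_M |\nabla f|^2$.

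Setting $U := \ostube(\partial M) \cup \bM$ and $\Gamma := \partial U \setminus \partial M$, the first key step is the mixed Steklov-Dirichlet trace inequality
\begin{equation*}
\int_{\partial M} g^2 \;\le\; \tube_{n,\kappa}(A) \int_U |\nabla g|^2 \qquad \text{for every } g \in H^1(U) \text{ with } g|_\Gamma = 0.
\end{equation*}
Using Fermi coordinates transverse to each boundary component of $\partial M$, available up to distance $\tube_{n,\kappa}(A)$ by Theorem~\ref{thm:tubeII}, together with the analogous Fermi coordinates transverse to each boundary-touching geodesic arc $\alpha$ in the Type~II and~III tubes of $\bM$, this follows from a one-dimensional Cauchy-Schwarz along the radial direction, using the Rauch comparison bound $A(t,p)\ge 1$ for the Jacobian to absorb the warping factor.

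Next, take a Lipschitz cutoff $\chi$ equal to $1$ on $\ostube^{\tube_{n,\kappa}(A)/2}(\partial M) \cup \bM$, supported in $U$, with $|\nabla \chi|\le 4/\tube_{n,\kappa}(A)$. For a constant $m$ to be chosen, $(f-m)\chi$ vanishes on $\Gamma$, so the trace inequality applied to this test function, together with Young's inequality and $\int_{\partial M}(f-m)^2 = 1 + m^2\vol(\partial M) \ge 1$, yields
\begin{equation*}
1 \;\le\; 2\tube_{n,\kappa}(A)\,\sigma_1(M) \;+\; \frac{32}{\tube_{n,\kappa}(A)} \int_R (f-m)^2,
\end{equation*}
where $R := \supp \nabla \chi \subset \ostube(\partial M)$ has volume at most $V_1$. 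The main step is then to choose $m$ as the $f$-average over a fixed ball of radius $\eta/4$ inside the middle region $M \setminus (\ostube^{\tube_{n,\kappa}(A)/2}(\partial M) \cup \bM)$ and to establish the oscillation bound
\begin{equation*}
\int_R (f - m)^2 \;\le\; C(n,\kappa)\, b\, V\, V_1\, A^{(2n-1)/(\kappa(n-2))}\, \sigma_1(M).
\end{equation*}
This would combine (a) a Moser $L^\infty$-$L^2$ estimate for the harmonic function $f-m$ on balls of radius $\sim\tube_{n,\kappa}(A)$ sitting inside $R$, scaling like $\tube_{n,\kappa}(A)^{-n} \sim A^{n/(\kappa(n-2))}$; (b) the Dodziuk-Randol Harnack chain-of-balls argument propagating the gradient bound $\|\nabla f\|_{L^2}^2 = \sigma_1(M)$ along the connected thick interior, connected for $n \ge 3$ by Margulis' lemma; and (c) crossings of any Type~I thin tube of $\intM$ met by the chain, handled via the explicit Margulis collar geometry described in Section~\ref{sect: preliminaries}. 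The factor $b$ accounts for concatenating chains to reach each of the $b$ boundary components.

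Substituting this bound back and using $\tube_{n,\kappa}(A) \ge c(n,\kappa)\,A^{-1/(\kappa(n-2))}$ from Lemma~\ref{lem:cn} gives
\begin{equation*}
1 \;\le\; C(n,\kappa)\bigl(\tube_{n,\kappa}(A) + b\, V\, V_1\, A^{2n/(\kappa(n-2))}\bigr)\sigma_1(M),
\end{equation*}
and the claimed lower bound $\sigma_1(M) \ge c_1(n,\kappa)/(b\, A^{2n/(\kappa(n-2))}\, V\, V_1)$ follows by rearrangement once the second term dominates, which holds for $V$ above a constant by \eqref{inq:lowerv}. The main obstacle is the middle $L^2$ estimate: adapting the Dodziuk-Randol Harnack chain-of-balls to control simultaneously the thick interior, the Type~I thin tubes, and the boundary collar, and tracking the precise dependence on $A$, $b$, $V$, and $V_1$ through the Moser iteration on balls of radius $\sim\tube_{n,\kappa}(A)$, is the main technical challenge.
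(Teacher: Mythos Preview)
Your overall strategy is sound and close to the paper's, but organized differently. The paper argues by contradiction in two cases on the interior thick set $M_1:=M^\epsilon_{\thick}\setminus\ostube^\epsilon(\partial M)$: either the eigenfunction is large there, forcing a nodal domain into the boundary collar where Lemma~\ref{lem:stekdir} applies directly, or it is small there and one uses $f\varphi$ as a Steklov--Dirichlet test function on a single boundary component. Your single test function $(f-m)\chi$ collapses both cases at once, and in fact does not need the factor $b$: you use the full boundary trace $\int_{\partial M}(f-m)^2\ge1$ rather than restricting to one component, and since $M_1$ is connected (the paper proves this carefully as a separate claim) the chain-of-balls already reaches every point of $R$ with $N\le c(n)r^{-n}V$ balls. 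Your stated reason for inserting $b$ (``concatenating chains to reach each boundary component'') is spurious, and if carried through correctly your route would actually remove the $b$ from the paper's bound.

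Two genuine gaps need attention. The more serious one: the Moser-type pointwise gradient bound (Lemma~\ref{lem:sobinq}) has a constant depending on the $C^m$-norm of the Riemann tensor, which is \emph{not} controlled by the pinching $-1\le K\le-\kappa^2$ alone. The paper first passes to a quasi-isometric smoothed metric $\g_\circ$ (Remark~\ref{rem:sobolevinq}, via Bemelmans--Min-Oo--Ruh) so that this constant depends only on $n$; without this step your constants are not $c(n,\kappa)$. Second, your trace constant is not $\tube_{n,\kappa}(A)$: inside a Type~II tube the Fermi ray from $x\in\partial M$ to $\Gamma$ can have length $r_x$ arbitrarily large, and your $(f-m)\chi$ does not vanish at distance $\tube_{n,\kappa}(A)$ there (since $\chi\equiv1$ on $\bM$), so one-dimensional Cauchy--Schwarz does not give that factor. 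The correct argument is the paper's Lemma~\ref{lem:stekdir}, which uses $A(t,x)\ge\cosh^{n-1}(\kappa t)$ to get a constant $c(n,\kappa)$; this suffices, as the first term of your displayed inequality is then absorbed by the second. Finally, to apply the oscillation bound on $R=\supp\nabla\chi$ you need $R\subset M_1$; this is not automatic and requires the more delicate cutoff involving the shells $S_\gamma$ and the scale $\epsilon\sim\tube_{n,\kappa}(A)$, as the paper does in its Case~2.
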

\noindent It is obvious that Theorem \ref{thm:specgap-intro} follows from the above theorem.\\

In Section \ref{sec:smallstek}, we give an example of a sequence of manifolds in which $b$, $A$, and $V_1$ are bounded for each element in the sequence, while $V$ tends towards infinity. We show that $\sigma_1(M)$ goes to zero with a rate proportional to $\frac{1}{V}$. It, in particular, shows that the power of $V$ in inequality~\eqref{eq:pV} is optimal. \\

To prove Theorem \ref{thm:specgap}, we invoke the argument used by Dodziuk and Randol \cite{DR86,Dod}  to give an alternative proof of Schoen–Wolpert–Yau's inequality \cite{SWY80} in dimension two and  Schoen's inequality \cite{Schoen82} in higher dimensions for closed negatively curved manifolds. We modify and adapt their ideas to the Steklov problem. The fact that we are dealing with manifolds with boundary introduces certain technical challenges, and the tubular neighborhood theorem \ref{thm:tube} proves crucial in overcoming these technical obstacles. 

We begin by stating a key lemma that provides a lower bound for the first eigenvalue of a Steklov-Dirichlet problem `near' the boundary. 

 Let $\Sigma $ be a connected component of $\partial M$ of volume $A$, and $\ostube^\delta(\Sigma) $ the $\delta$–\,tubular neighborhood of $\Sigma$ with $\delta\le\tube_{n,\kappa}(A)$. Let $\Omega=\ostube^\delta(\Sigma)\cup M_{\thin,\Sigma}^{\delta}$, where 
 \begin{equation}\label{mthinksigma}
{M_{\thin,\Sigma}^{\delta}}:=\{T_\gamma\in M_{\thin}^{\delta}: \text{$T_{\gamma}$ is a type II tube and $\gamma\subset \Sigma$}\}.
 \end{equation}
 We consider the mixed Steklov-Dirichlet eigenvalue problem on $\Omega$:
\begin{align*}
    \begin{cases}
        \Delta f = 0 & \text{in } \Omega, \\
        \del_{\nu} f = \sigma^D f & \text{on } \Sigma, \\
        f = 0 & \text{on } \partial \Omega\setminus\Sigma.
    \end{cases}
\end{align*}
\begin{lemma}\label{lem:stekdir}
The first Steklov-Dirichlet eigenvalue $\sigma_1^D(\Omega)$  is bounded below by $\frac{\kappa(n-1)}{2^{n-1}}$.
 \end{lemma}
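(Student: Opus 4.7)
The plan is to apply the variational characterization
\begin{equation*}
\sigma_1^D(\Omega) = \inf\left\{ \frac{\int_\Omega |\nabla f|^2 \dd v}{\int_\Sigma f^2 \dd s} : f \in H^1(\Omega),\ f|_{\partial\Omega\setminus\Sigma} = 0,\ f \not\equiv 0 \right\}
\end{equation*}
and to establish the one-sided trace inequality $\int_\Sigma f^2 \dd s \le \tfrac{2^{n-1}}{(n-1)\kappa} \int_\Omega |\nabla f|^2 \dd v$ for every admissible $f$. The key idea is to integrate $\partial_t f$ along normal rays emanating from $\Sigma$, apply Cauchy-Schwarz with the Jacobian of the normal exponential map as a weight, and control that weight via the volume comparison theorem for tubes under the curvature upper bound $-\kappa^2$.

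Let $\Phi : [0,\infty)\times\Sigma \to M$ be given by $\Phi(t,x) = \exp_x(-t\nu(x))$, with $\nu$ the outward unit normal to $M$ along $\Sigma$. By Theorem \ref{thm:tubeII}, $\Phi$ is a diffeomorphism onto the tubular neighborhood $\ostube(\Sigma) \supset \ostube^{\delta}(\Sigma)$. To obtain a global Fermi parametrization of $\Omega = \ostube^{\delta}(\Sigma) \cup M_{\thin,\Sigma}^{\delta}$, I extend $\Phi$ over the type II thin tubes $T_\gamma$ with $\gamma \subset \Sigma$: lifting to the universal cover, the normal exponential map from the totally geodesic hyperplane is injective globally (absence of focal points in nonpositive curvature), and the identifications in the quotient relevant to $T_\gamma$ come only from the cyclic stabilizer of $\gamma$, which acts on the lifted $\Sigma$ by translation along $\gamma$ and whose quotient precisely yields the embedded tube around $\gamma$ in $\Sigma$. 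In these Fermi coordinates the metric has the form $\dd t^2 + g_t$ and the volume element is $A(t,x) \dd t \wedge \dd v_\Sigma$ with $A(t,x) = |\det d\Phi(t,x)|$; the tube volume comparison (see e.g. \cite[Lemma 8.22]{G03}) then gives $A(t,x) \ge \cosh^{n-1}(\kappa t)$.

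Setting $\tilde f = f \circ \Phi$, let $R(x)$ be the first exit time of $t \mapsto \Phi(t,x)$ from $\Omega$, so that the Dirichlet condition forces $\tilde f(R(x),x) = 0$. Writing $1 = A^{-1/2} \cdot A^{1/2}$ and applying Cauchy-Schwarz yields
\begin{equation*}
f(x)^2 = \left(\int_0^{R(x)} \partial_t \tilde f \dd t\right)^2 \le \left(\int_0^{R(x)} \frac{\dd t}{A(t,x)}\right)\left(\int_0^{R(x)} A(t,x)|\partial_t\tilde f|^2 \dd t\right).
\end{equation*}
The first factor is bounded by $\int_0^\infty \cosh^{-(n-1)}(\kappa t) \dd t \le \int_0^\infty 2^{n-1} e^{-(n-1)\kappa t} \dd t = \tfrac{2^{n-1}}{(n-1)\kappa}$, using $\cosh(\kappa t) \ge e^{\kappa t}/2$. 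Integrating in $x \in \Sigma$, invoking $|\partial_t \tilde f|^2 \le |\nabla f|^2 \circ \Phi$, and changing variables under $\Phi$ gives $\int_\Sigma f^2 \dd s \le \tfrac{2^{n-1}}{(n-1)\kappa} \int_\Omega |\nabla f|^2 \dd v$, whence $\sigma_1^D(\Omega) \ge \tfrac{(n-1)\kappa}{2^{n-1}}$. The main obstacle is precisely the justification of the global Fermi parametrization: the width of a type II thin tube $T_\gamma$ blows up as $\length(\gamma) \to 0$, so $M_{\thin,\Sigma}^{\delta}$ can extend well beyond the canonical tubular neighborhood $\ostube(\Sigma)$, and the universal-cover argument above is what certifies that $\Phi$ remains injective on $\Phi^{-1}(\Omega)$.
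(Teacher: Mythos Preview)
Your proof is correct and follows essentially the same route as the paper's: Fermi coordinates based on $\Sigma$, the comparison $A(t,x)\ge\cosh^{n-1}(\kappa t)$ from the tube volume comparison, Cauchy--Schwarz along normal rays with $A$ as weight, and the elementary bound $\int_0^\infty \cosh^{-(n-1)}(\kappa t)\,\dd t\le 2^{n-1}/((n-1)\kappa)$. The only difference is that you argue via the variational characterisation for an arbitrary admissible $f$ while the paper works directly with the first eigenfunction, and you take more care than the paper does to justify that the Fermi parametrisation from $\Sigma$ extends injectively over the type~II thin tubes in $M_{\thin,\Sigma}^{\delta}$; the paper simply asserts the Fermi coordinates on all of $\Omega$.
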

  \begin{proof}
     Let  $f$ be the eigenfunction corresponding to $\sigma_1^D(\Omega)$. Let us consider the Fermi coordinates on $\Omega$ based on $\Sigma$, given by $(t, x)$ where $t$ is the distance from $\Sigma$ and $x$ is the position in $\Sigma$ (see \cite[III.6]{chavel_2006} for details). In these coordinates we have 
     \begin{align*}
          \int_\Omega |\nabla f|^2 \dd v
          &\geq \int_\Omega (\del_t f)^2 \dd v = \int_\Sigma \int_0^{r_x} (\del_t f)^2  A(t,x) \dd t \dd v_\Sigma, 
      \end{align*}
       where $r_x$ is the distance between $x$ and the $\partial \Omega \setminus \Sigma$, $A(t,x) = |\det (\dd \exp^\perp)|$ with $\dd \exp^\perp$ the differential of the exponential map of the normal bundle of $\Sigma$, and $\dd v_\Sigma$ is the volume element on $\Sigma$. 
      Since the sectional curvature is bound above  by $-\kappa^2$, we have the following inequality \cite{HK78,G03}
      \begin{align*}
          A(t,x) = |\det (\dd \exp^\perp)| \geq \cosh^{n-1}(\kappa t).
      \end{align*}
     Using the Cauchy-Schwarz inequality and the above inequality, we get
      \begin{align*}
          \int_\Omega |\nabla f|^2 \dd v
          &\ge  \int_\Sigma \int_0^{r_x} (\del_t f)^2  A \dd t \dd v_\Sigma \\
          &\geq \int_\Sigma \left(\frac{1}{\int_0^{r_x} \frac{1}{A(t,x)} \dd t} \right) \left(\int_0^{r_x} \del_t f \dd t \right)^2 \dd v_\Sigma \\
          &\ge \int_\Sigma \frac{1}{\int_0^{r_x} {\frac{1}{\cosh^{n-1}(\kappa t)}} \dd t} f(0,x)^2 \dd v_\Sigma \\
           &\geq \frac{1}{\int_0^\infty \frac{1}{\cosh^{n-1}(\kappa t)} \dd t} \int_\Sigma f(0,x)^2 \dd v_\Sigma.
      \end{align*}
      
      We conclude 
      \begin{align*}
          \sigma_{1}^D(\Omega) &= \frac{\int_\Omega |\nabla f|^2 dv}{\int_\Sigma f^2 dv_\Sigma} 
          \geq \frac{1}{\int_0^\infty \frac{1}{\cosh^{n-1}(\kappa t) \dd t}} 
          \geq \frac{\kappa (n-1)}{2^{n-1}}.
      \end{align*}

\end{proof}
The second key lemma is an inequality for harmonic 1-forms.   We state a special case of this result, which will be used in the proof of Theorem \ref{thm:specgap}. For a general version of this lemma for functions, we refer to \cite{CGT}. It leads to a version for  1-forms as stated in \cite[page 32]{Dod}, see also \cite{DR86}.
\begin{lemma}\label{lem:sobinq} Let $\varphi$ be a harmonic function on a subset $\Omega$ of a closed $(-\kappa^2)$–\,pinched negatively curved $n$–\,manifold $N$. Let $\g$   denote the Riemannian metric on $N$.
For any (metric) ball $B\subset \Omega$ centered at $x\in \Omega$ of radius $0<r<\inj_{\g}(x)$, the following holds.

\begin{equation*}
|\nabla_{\g}\varphi(x)|\le  C{r^{-\frac{n}{2}}}\left(\int_B|\nabla_{\g} \varphi|^2\dd v_{\g}\right)^{\frac{1}{2}},
\end{equation*}
where $C$ is a  constant depending only  on $n$ and the $C^m(B)$–\,norm of the Riemann curvature tensor $\text{Rm}_{\g}$ on $B$ for some $m$ depending only on $n$.
\end{lemma}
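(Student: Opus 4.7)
The plan is to derive the pointwise gradient bound from a Weitzenb\"ock/Bochner-type subharmonicity inequality for $|\nabla_{\g}\varphi|^2$ combined with a local mean-value estimate for subsolutions of a linear elliptic inequality on the ball $B$.

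Since $\varphi$ is harmonic, the 1-form $\omega := d\varphi$ is both closed and co-closed, hence harmonic as a form. The Weitzenb\"ock formula gives
\[
\tfrac{1}{2}\Delta_{\g}|\omega|^2 \;=\; |\nabla_{\g}\omega|^2 + \mathrm{Ric}_{\g}(\omega^\sharp,\omega^\sharp) \;\ge\; -(n-1)|\omega|^2,
\]
where the inequality uses $\mathrm{Ric}_{\g}\ge -(n-1)$, a consequence of the sectional curvature lower bound $-1$ in the pinching hypothesis. Thus $u := |\nabla_{\g}\varphi|^2$ is a nonnegative weak subsolution of $\Delta_{\g} u + 2(n-1)u \ge 0$ on $\Omega$. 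The next step is to invoke the local mean-value inequality of Cheeger-Gromov-Taylor \cite{CGT}, adapted to the harmonic 1-form setting as in \cite[p.~32]{Dod}: there exists a constant $C_1$, depending only on $n$ and $\|\mathrm{Rm}_{\g}\|_{C^m(B)}$ for some $m = m(n)$, such that
\[
u(x) \;\le\; \frac{C_1}{\vol_{\g}(B)}\int_B u\,\dd v_{\g}.
\]
Finally, the Bishop-Gromov volume comparison theorem, applied with the Ricci lower bound above, yields $\vol_{\g}(B)\ge c(n)\,r^n$ for $0<r<\inj_{\g}(x)$ within a bounded range of $r$; substituting into the displayed inequality and taking a square root produces
\[
|\nabla_{\g}\varphi(x)| \;\le\; C\,r^{-n/2}\left(\int_B |\nabla_{\g}\varphi|^2\,\dd v_{\g}\right)^{1/2}
\]
with the asserted dependence of $C$.

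The main obstacle is the mean-value inequality above with a constant controlled purely by local geometric data. In $\mathbb{R}^n$ this is classical Moser iteration; on a Riemannian ball one requires a local Sobolev inequality whose constant depends only on the geometry of $B$. This is exactly the technical content of \cite{CGT}: by working in harmonic (or sufficiently regular) coordinates on $B$, one converts the subsolution inequality for $\Delta_{\g}$ into a subsolution inequality for a uniformly elliptic operator on a Euclidean domain, and $C^m$-control on the curvature tensor translates, via standard harmonic-coordinate estimates, into $C^{m-1}$-control on the coefficients of the transformed operator, which is precisely what Moser iteration needs. The upgrade from scalar subsolutions to the 1-form case $u=|\nabla_{\g}\varphi|^2$ is then essentially formal, because the Weitzenb\"ock computation only modifies the zero-order term of the equation.
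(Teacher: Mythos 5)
The paper does not prove this lemma from scratch: it is stated as a special case of a local gradient estimate for harmonic functions from Cheeger--Gromov--Taylor \cite{CGT}, adapted to harmonic 1-forms following Dodziuk \cite[p.~32]{Dod} and Dodziuk--Randol \cite{DR86}. Your reconstruction --- Bochner--Weitzenb\"ock to show $u=|\nabla_{\g}\varphi|^2$ satisfies $\Delta_{\g}u+2(n-1)u\ge 0$ (using $\mathrm{Ric}_{\g}\ge -(n-1)$ from the curvature lower bound $-1$), then a Moser-type local mean-value inequality with constant controlled by $\|\mathrm{Rm}_{\g}\|_{C^m(B)}$, then a lower bound on $\vol_{\g}(B)$ --- is the right architecture and matches the line of reasoning in those references.

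There is one genuine error, though it is locally fixable: you assert that ``the Bishop--Gromov volume comparison theorem, applied with the Ricci lower bound above, yields $\vol_{\g}(B)\ge c(n)r^n$.'' This is the wrong direction. Bishop--Gromov with $\mathrm{Ric}\ge -(n-1)$ gives an \emph{upper} bound $\vol_{\g}(B_r)\le V_{n,-1}(r)$ on ball volumes, which is useless here. The lower bound $\vol_{\g}(B_r)\ge \omega_n r^n$ you need comes instead from the G\"unther--Bishop comparison theorem \cite[Theorem III.4.2]{chavel_2006}, which uses the \emph{upper} sectional curvature bound ($\le -\kappa^2<0$, or even just $\le 0$) together with the hypothesis $r<\inj_{\g}(x)$ --- and this is exactly why the lemma imposes the injectivity-radius restriction on $r$. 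The paper itself invokes G\"unther--Bishop in precisely this way for the same volume lower bound, e.g.\ in \eqref{inq:lowerv} and again in the proof of Theorem \ref{thm:specgap}. Replacing your Bishop--Gromov citation with G\"unther--Bishop closes the gap; the rest of the argument stands.
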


\begin{remark}\label{rem:sobolevinq}
 As in Dodziuk \cite{Dod}, we want the constant $C$ to depend only on the dimension. This can be done using the results of Bemelmans, Min-Oo, and Ruh in   \cite{BMR}, and  Dodziuk in \cite{Dod}.
    More precisely,  for any $\delta>0$ { and $m\in \mathbb{N}$}, there exist a metric ${\g}_{\delta}$ on $N$ and positive constants $c_{\countc}(n,m,\delta)$ and $c_{\countc}(n,{m},\delta)$ such that $$(1-\delta)\g\le \g_\delta\le(1+ \delta)\g\,;\quad\quad \|\text{Rm}_{\g_\delta}\|_{C^m(N)}\le c_{2}(n,{m},\delta)\,;$$
     $$\inj_{\g_\delta}(x)\ge c_{3}(n,{m},\delta)>0,\qquad \forall x\in N_{\thick},$$
     where, $N_{\thick}$ is defined with respect to $g$. 
     
   \noindent  Let $\g_{\circ}=\g_{1/2}$. Thus by the variational characterisation \eqref{eq:variational characterisation} of $\sigma_1$ we get
     \[c_{\countc}(n)\sigma_1(M,\g_\circ) \le \sigma_1(M,\g)\le c_{\countc}(n)\sigma_1(M,\g_\delta).\]
     The same inequality holds if we replace $\sigma_1$ by $\sigma_1^D$.
      Therefore, for $\Omega$ the domain in Lemma~\ref{lem:stekdir}, we have 
      \[\sigma_{1}^D(\Omega,\g_{\circ})\ge c_{\countc}(n,\kappa).\]

\end{remark}
\noindent We now proceed with the proof of Theorem \ref{thm:specgap}.
\begin{proof}[Proof of Theorem \ref{thm:specgap}] 
First note that without loss of generality, we can assume the manifold to be orientable. Indeed if not, one can consider its orientable double cover and, since the Steklov spectrum of a manifold is a subset of the spectrum of its cover, lower bounds obtained for the orientable double cover provide lower bounds for the original non-orientable manifold.

Let $\g_\circ$ be as in Remark \ref{rem:sobolevinq}. It is enough to prove the theorem for $\sigma_1(M,\g_\circ)$. Let  $\varphi$ be a $\sigma_1(M,\g_{\circ})$–\,Steklov eigenfunction with $\|\varphi\|_{L^2(\partial M,\g_\circ)}=1$. 
The idea of the proof is to show that $\sqrt{\sigma_1(M,\g_\circ)}$ controls the variation of $\varphi$ on the thick part of $M$ away from the boundary. Hence if $\sigma_1(M,\g_\circ)$ is small, $\varphi$ is nearly constant there. Then two cases arise: if $\varphi$ is large, we shall see it cannot vanish in the thick part away from the boundary union with type I tubes and hence must have a nodal domain in a neighborhood of the boundary, allowing us to use Lemma \ref{lem:stekdir} to conclude; if $\varphi$ is small in this thick part, then we can use it to construct a test function for the Steklov-Dirichlet problem on a neighborhood of a boundary component and again use Lemma \ref{lem:stekdir} to conclude. 

Throughout the proof, only, the $\epsilon$–\,thick-thin decomposition of $M$ is given for the original metric $\g$ on $M$. Otherwise, we always use metric $\g_\circ$. We begin by showing that for some $\epsilon > 0$, to be fixed later, and for any point $x\in M^\epsilon_{\thick}$, positioned at a distance from the boundary of $M$, the magnitude of the gradient of $\varphi(x)$ is bounded above by $\sigma_1(M,\g_\circ)$. 

We now apply Lemma \ref{lem:sobinq}  to $\varphi$. 
 We get that for any ball $B\subset M$ centered at $x\in M$ and of radius $0<r<c_{\countc}(n)\le \mu(n)$, where $c_7(n):={c_3(n,1/2)}$ is a lower bound on $\inj_{\g_\circ}(x)$ given in Remark \ref{rem:sobolevinq}, we have
\begin{equation*}
    \|\nabla_{\g_\circ} \varphi\|_{\infty, B/2 }\le {c_{\countc}(n)}{r^{-\frac{n}{2}}}\left(\int_B|\nabla_{\g_\circ}\varphi|^2\right)^{1/2},
\end{equation*}
where  $B/2$ is the ball concentric with $B$ and of half the radius of $B$, and  Here, $\|\cdot\|_{\infty, B/2}$ denotes the sup norm on $B/2$.

    We can choose $\epsilon\le \mu(n)$  small enough such that there are no type III tubes, i.e. each connected component of $M^{2\epsilon}_{\thin}$ is a tube based on a unique closed simple geodesic either in the interior of $M$ or on $\partial M$ (see Section \ref{sect: preliminaries}). This can be achieved by choosing 
    \begin{equation}\label{def:epsilon}
        \epsilon:=c_{\countc}(n)\tube_{n,\kappa}(A),\end{equation}
    where $c_{9}(n)$ is a positive constant such that  $$c_{9}(n)\tube_{n,\kappa}(A)<\min\left\{\tube_{n,\kappa}(A),c_{7}(n),\frac{\mu}{5}\right\},$$ 
    
      Let $M_1:=M_{\thick}^\epsilon\setminus\ostube^\epsilon(\partial M)$. {Note that for any $x\in M_1$ $$\frac{\epsilon}{\sqrt{2}}\le\frac{1}{\sqrt{2}} \dist_{\g}(x,\partial M)\le \dist_{\g_\circ}(x,\partial M).$$} Then any ball of radius $r\le \frac{\epsilon}{2}$ centered in $M_1$ is a subset of $M$. {We shall see that $M_1$ is connected, see the claim below. Since $M$ has a finite volume, there exist positive constants $c_{\countc}(n)$ and  $c_{\countc}(n)$  such that and any two points  $x,y\in M_1$ can be connected by a sequence of $N\le c_{11}(n)r^{-n}V$ overlapping balls $\{B_j/2\}_{j=1}^N$ of radius $r/2$ centred in $M_1$, with each ball intersects at most $c_{10}(n)$ of $B_j$'s. Indeed, let $N$ be the number of pairwise disjoint balls $\{B_j/4\}$ of radius $r/4$ (each $B_j$ has radius $r$) centred in $M_1$. Then $\{B_j/2\}$ covers $M_1$ and $N\le \frac{V}{\min_j \vol_{\g}(B_j/4)}\le c_{11}(n)r^{-n}V $. The last inequality is a consequence of the G\"unther-Bishop volume comparison theorem~\cite[Theorem III.4.2]{chavel_2006} which implies that the volume of a ball of radius $r$ is bounded below by      \[\vol_{\g}(B_r)\ge \omega_n r^n.\]
     Similarly, using the Bishop-Gromov volume comparison theorem~\cite[Theorem III.4.5]{chavel_2006} for $(M,\g)$, one can show there exists $c_{10}(n)$ such that each ball intersects at most $c_{10}(n)$ balls.} 
  Therefore,
  \begin{eqnarray*}
   \sum_j  \|\nabla_{\g_\circ} \varphi\|_{\infty, B_j/2 }&\le & {c_{8}(n)}{r^{-\frac{n}{2}}}\sum_{j=1}^N\left(\int_{B_j}|\nabla_{\g_\circ}\varphi|^2\right)^{1/2} \\
   &\le& {c_{8}(n)}{r^{-\frac{n}{2}}} \sqrt{N}\left(\sum_{j=1}^N\int_{B_j}|\nabla_{\g_\circ}\varphi|^2\right)^{1/2}\\
   &\le& {c_{\countc}(n)}r^{-n}  \sqrt{V}\left(\int_{M}|\nabla_{\g_\circ}\varphi|^2\right)^{1/2}\\
   &=&{c_{12}(n)}r^{-n}  \sqrt{V}\sqrt{\sigma_1(M,\g_\circ)},
  \end{eqnarray*}
 { where $c_{12}(n)=\sqrt{c_{10}(n)c_{11}(n)}$}.
  Given the bound on $\tube_{n,\kappa}(A)$ in Lemma \ref{lem:cn} and Remark~\eqref{rem:lowerv}, we choose 
  \begin{equation}\label{varepsilon}
      r=c_{\countc}(n)A^{-\frac{1}{\kappa(n-2)}}<\epsilon/2
  \end{equation}
   for the rest of the proof.
       To summarise, we get 
  \[\sum_j  \|\nabla_{\g_\circ} \varphi\|_{\infty, B_j/2 }\le {c_{\countc}(n)}A^{\frac{n}{\kappa(n-2)}}\sqrt{V}\sqrt{\sigma_1(M,\g_{\circ})}.\]

If there exists a piece-wise geodesic  curve $c:[0,1]\to M_1$ connecting $x$ and $y$ in $M_1$, then
   \begin{eqnarray}\label{oscilation}
      \nonumber |\varphi(x)-\varphi(y)|&=&\left|\int_0^1\frac{d}{dt}\varphi\circ c(t)dt\right|\\
       \nonumber&\le& \sum_{j}\|\nabla_{\g_\circ} \varphi\|_{\infty, B_j/2 }\int_{t_j}^{t_{j+1}}|c'(t)|_{\g_\circ}dt\\
      \nonumber &\le&r\sum_{j}\|\nabla_{\g_\circ} \varphi\|_{\infty, B_j/2 }\\
       &\le& {c_{\countc}(n)}A^{\frac{n-1}{\kappa(n-2)}}\sqrt{V}\sqrt{\sigma_1(M,\g_\circ)}
   \end{eqnarray}
   Here, $0=t_0<t_1< \cdots<t_m=1$ is a partition of $[0,1]$ such that $c([t_j,t_{j+1}])\subset B_j/2$. {We want to conclude that}  the oscillation of $\varphi$ on $M_1$ is also controlled  by $\sigma_1(M,\g_{\circ})$, i.e
  {$$\displaystyle{\sup_{x,y\in M_1}|\varphi(x)-\varphi(y)|}\le {c_{15}(n)}A^{\frac{n-1}{\kappa(n-2)}}\sqrt{V}\sqrt{\sigma_1(M,\g_\circ)}.$$}
   
 For this purpose, {it is enough to show} the connectedness of $M_1$.\\

\noindent\textit{Claim.~} $M_1=M_{\thick}^\epsilon\setminus\ostube^\epsilon(\partial M)$ is connected. 
  \begin{proof}[Proof of the claim]
  {Note that $M^{\epsilon}_{\thick}$ is connected.  We can write 
  \begin{equation}\label{eq:tauepsilon}
      \ostube^\epsilon(\partial M) =\ostube^\epsilon(\partial M \cap M^{2\epsilon}_{\thick})\cup\ostube^\epsilon(\partial M \cap (\bM^{2\epsilon}\setminus \bigcup_{|\gamma|\le 4\epsilon} S_\gamma)), \end{equation}  
      where $S_\gamma\subset M^{2\epsilon}_{\thick}\cap M^{2\epsilon}_{\thin}$ is the shell of the thin tube $T_\gamma$.} We can write $M_1$ as
      $$M_1=\left(M^{\epsilon}_{\thick}\setminus\ostube^\epsilon(\partial M \cap M^{2\epsilon}_{\thick})\right)\setminus \ostube^\epsilon(\partial M \cap (\bM^{2\epsilon}\setminus \bigcup_{|\gamma|\le 4\epsilon} S_\gamma)).$$
      We first show that $M^{\epsilon}_{\thick}\setminus\ostube^\epsilon(\partial M \cap M^{2\epsilon}_{\thick})$ is connected.
      For any $p\in \partial M\cap M^{2\epsilon}_{\thick}$  and $x\in M$, we have (see \cite{Xu} and reference therein) $$\inj_{{(\double M,\g)}}(x)\ge \inj_{{(\double M,\g)}}(p)-\dist_{\g}(x,p).$$  Hence,  $\inj_{{(\double M,\g)}}(x)\ge\epsilon$ for any $x\in{\ostube^\epsilon(\partial M \cap M^{2\epsilon}_{\thick})}$ {which implies $\ostube^\epsilon(\partial M \cap M^{2\epsilon}_{\thick})\subset M^{\epsilon}_{\thick}$. Therefore, $M^{\epsilon}_{\thick}\setminus\ostube^\epsilon(\partial M \cap M^{2\epsilon}_{\thick})$ is homoemorphic (using the exponential map in the direction normal to the boundary) to $M^{\epsilon}_{\thick}$. Let $\Phi:M^{\epsilon}_{\thick} \to M^{\epsilon}_{\thick}\setminus\ostube^\epsilon(\partial M \cap M^{2\epsilon}_{\thick})$ be a homomorphism between the two. Then $\Phi$ gives also a homomorphism between $M^{\epsilon}_{\thick}\setminus U$ and $\left(M^{\epsilon}_{\thick}\setminus\ostube^\epsilon(\partial M \cap M^{2\epsilon}_{\thick})\right)\setminus U$, for any domain $U$. Hence, it is enough to show that $M^{\epsilon}_{\thick}\setminus U$ for  $U=\ostube^\epsilon(\partial M \cap (\bM^{2\epsilon}\setminus \bigcup_{|\gamma|\le 4\epsilon} S_\gamma))$ is connected.
      We claim that 
      \begin{equation}\label{eq:inclusion}
          \ostube^\epsilon(\partial M \cap (\bM^{2\epsilon}\setminus \bigcup_{|\gamma|\le 4\epsilon} S_\gamma))\subset \bM^{2\epsilon}.\end{equation}
 We then obtain that $\ostube^\epsilon(\partial M \cap (\bM^{2\epsilon}\setminus \bigcup_{|\gamma|\le 4\epsilon} S_\gamma))$ is homeomorphic to $\bM^{2\epsilon}$, and as a result $M^\epsilon_{\thick}\setminus\ostube^\epsilon(\partial M \cap (\bM^{2\epsilon}\setminus \bigcup_{|\gamma|\le 4\epsilon} S_\gamma))$ is homeomorphic to $M^\epsilon_{\thick}\setminus\bM^{2\epsilon}$.   Note that $M_{\thick}^\epsilon\setminus\bM^{2\epsilon}$ is connected {and hence so is $M^\epsilon_{\thick}\setminus\ostube^\epsilon(\partial M \cap (\bM^{2\epsilon}\setminus \bigcup_{|\gamma|\le 4\epsilon} S_\gamma))$}.
 }
 
  It remains to prove the claim \eqref{eq:inclusion}. For every $x\in \ostube^\epsilon(\partial M \cap (T_{\gamma}\setminus S_\gamma))$ with $|\gamma|\le 2\epsilon$, let $x_0\in \partial M\cap(T_{\gamma}\setminus S_\gamma)$ be a point such that $\dist_{\g}(x,x_0)=\dist_{\g}(x,\partial M)\le \epsilon$. Then 
\[\dist_{\g}(x,\gamma)\le \dist_{\g}(x,x_0)+\dist_{\g}(x_0,\gamma)\le \epsilon +\dist_{\g}(\gamma,\partial T_\gamma)-\frac{\mu}{5}\le \dist_g(\gamma,\partial T_\gamma).\]
In the last inequality, we use the fact that  $\epsilon\le\frac{\mu}{5}$, see \eqref{def:epsilon}. Therefore, $x\in T_\gamma\subset \bM^{2\epsilon}$ and it proves the inclusion \eqref{eq:inclusion}.
 \end{proof}
  Let us assume $\sigma_1(M,g_\circ)\le \frac{\delta}{b 
  A^{\frac{2n}{\kappa(n-2)}}V_1 V}$ for some $\delta$ small enough depending only on $n$ and $\kappa$, which will be determined later. We shall see that we get a contradiction. Consider the following two cases.

\noindent\textit{Case 1.~} If $\|\varphi\|_{\infty, M_1}>c_{15}{(n)} \sqrt{\frac{\delta}{bA^{\frac{2}{\kappa(n-2)}}V_1} }$, we show that $\varphi$ never vanishes on  $M_1\cup \intM^\epsilon$. By the bound on the oscillation of $\varphi$ given in \eqref{oscilation}, we have 
 \[
      |\varphi(x)-\varphi(y)|
       \le {c_{15}(n)} \sqrt{\frac{\delta}{bA^{\frac{2}{\kappa(n-2)}}V_1} },\qquad{x,y\in M_1}
  \]
which implies that $\varphi$  never vanishes on $M_1$. Moreover, if $\phi$  vanishes on $\intM^{\epsilon}\setminus\ostube^{\epsilon}(\partial M)$ which is homeomorphic to a disjoint union of type $I$ tubes, then its zero-set has to enclose a bounded domain in a type I tube and by maximum principle $\phi$ should be zero on that domain. Hence, $\phi$ does not vanish on $M_1\cup (\intM^{\epsilon}\setminus\ostube^{\epsilon}(\partial M))$. Since $M_1\cup (\intM^{\epsilon}\setminus\ostube^{\epsilon}(\partial M))$ is connected,
one of the nodal domains of $\varphi$ should be a subset of a connected component of  $\ostube^\epsilon(\partial M)\cup \bM^{\epsilon}$.
 Each connected component of $\ostube^\epsilon(\partial M)\cup \bM^{\epsilon}$ is of the form  $\ostube^\epsilon(\Sigma)\cup {M_{\thin,\Sigma}^{\epsilon}}$, where $\Sigma$ is a connected component of $\partial M$ and ${M_{\thin,\Sigma}^{\epsilon}}$ is defined in \eqref{mthinksigma}.\\
 
 Let $M_2=\ostube^\epsilon(\Sigma)\cup {M_{\thin,\Sigma}^{\epsilon}}$ be a connected component containing a nodal domain of $\varphi$. As a result of the variational characterization of Steklov eigenvalues, we have  $\sigma_1(M,\g_\circ)\ge \sigma_1^D(M_2,\g_\circ)$, where $\sigma_1^D(M_2,\g_\circ)$ is the first Steklov-Dirichlet eigenvalue of $M_2$ with Steklov condition on $\Sigma$.    By Lemma \ref{lem:stekdir},  we get
 \[\frac{\delta}{bA^{\frac{2n}{\kappa(n-2)}}V_1V}\ge\sigma_1(M,\g_\circ)\ge \sigma_1^D(M_2,\g_\circ)> c_{\countc}(n,\kappa)
 \]
As discussed in Section \ref{sect: preliminaries}, $A, V_1$, and $V$ are bounded below by a constant depending only on $n$. Hence, we get a contradiction if we choose $\delta$  small enough depending only on $n$ and $\kappa$. \\
 
\noindent\textit{Case 2.~} We now consider the remaining  case, $\|\varphi\|_{\infty, M_1}\le c_{15}{(n)} \sqrt{\frac{\delta}{bA^{\frac{2}{\kappa(n-2)}}V_1}} $. 

  Since $\|\varphi\|^2_{L^2(\partial M,\g_{\circ})}=1$, there exists a connected component $M_\Sigma:=\ostube^\epsilon(\Sigma)\cup {M_{\thin,\Sigma}^{3\epsilon}}$ of $\ostube^\epsilon(\partial M)\cup \bM^{3\epsilon}$ with $\|\varphi\|^2_{L^2(\Sigma,\g_\circ)}\ge \frac{1}{b}$. Let $F=f\varphi$, where $f$ is \begin{itemize}
\item[-] equal to $1$ on $\ostube^\epsilon(\del \Sigma) \cup (M_{\thin,\Sigma}^{3\epsilon} \setminus S_\Sigma)$, where $S_\Sigma$ is the union of all shells in ${M_{\thin,\Sigma}^{{3}\epsilon}}$,
\item[-]identically zero on $M\setminus (M_\Sigma\cup\ostube^{2\epsilon}(\Sigma))$,
\item[-]and Lipschitz on $S_\Sigma\cup (\ostube^{2\epsilon}(\Sigma)\setminus M_\Sigma)$  with Lipschitz constant $\Lip_{\g_{\circ}}f=L\le\frac{c_{\countc}}{\epsilon}$, where $c_{17}$ is a universal constant. 
\end{itemize}
An example of such $f$ is the following Lipschitz function:
$$f(x) = \max\left(1 - \frac{\dist_{\g} (x, \ostube^\epsilon(\del \Sigma) \cup (M_{\thin,\Sigma}^{3\epsilon} \setminus S_\Sigma)}{\epsilon}, 0\right). $$
Note that from the definition of the shell given in Section \ref{sect: preliminaries}, we have $\dist_{\g}({M_{\thin,\Sigma}^{3\epsilon}}\setminus S_{\Sigma},M\setminus M_{\thin,\Sigma}^{3\epsilon})= \frac{\mu}{5}.$ Since $\epsilon < \frac{\mu}{5}$, one then see that the support of $f$ is contained in $M \setminus (M_\Sigma\cup\ostube^{2\epsilon}(\Sigma))$. Furthermore,
taking into account the relation between $\g$ and $\g_\circ$, we have $$\Lip_{\g_\circ} f\le\sqrt{2}\Lip_{\g} f\le\frac{\sqrt{2}}{\epsilon}.$$

Let $\tilde M_\Sigma={M_{\thin,\Sigma}^{3\epsilon}}\cup\ostube^{2\epsilon}(\Sigma)$ and consider the Steklov-Dirichlet problem on $\tilde M_\Sigma$
with Steklov condition on $\Sigma$. The function $F$ can be used as a test-function for this problem and
$$\sigma_1^D(\tilde M_\Sigma,\g_\circ)\le \frac{\int_{\tilde M_\Sigma}|\nabla_{\g_{\circ}} F|^2}{\int_\Sigma F^2}\le \frac{\int_{\tilde M_\Sigma}|\nabla_{\g_\circ} {F}|^2}{\int_\Sigma {\varphi}^2}=b\int_{\tilde M_\Sigma}|\nabla_{\g_\circ} F|^2.$$
We now obtain an upper bound for ${\int_{\tilde M_\Sigma}|\nabla F|^2}$ in terms of $\sigma_1(M,\g_\circ)$.
\allowdisplaybreaks
\begin{align}
\nonumber    {\int_{\tilde M_\Sigma}|\nabla_{\g_\circ} F|^2}&={\int_{\tilde M_\Sigma}|f\nabla_{\g_\circ} \varphi+\varphi\nabla_{\g_\circ} f|^2}\\
  \nonumber  &\le  \left(\left(\int_{\tilde M_\Sigma}|\nabla_{\g_\circ}\varphi|^2\right)^{\frac{1}{2}}+L\left(\int_{{\supp(\nabla f)}}\varphi^2\right)^{\frac{1}{2}}\right)^2
\\
\label{eq:26}&\le  \left(\sqrt{\sigma_1(M,\g_\circ)}+c_{\countc}(n)A^{\frac{1}{\kappa(n-2)}}\,\|\varphi\|_{\infty,\supp(\nabla f)}\vol_{\g}(\tilde M_\Sigma)^{\frac{1}{2}}\right)^2\\
\label{eq:27}&{\le  \left(\sqrt{\sigma_1(M,\g_\circ)}+c_{18}(n)A^{\frac{1}{\kappa(n-2)}}\,\|\varphi\|_{\infty,M_1}{\sqrt{V_1}}\right)^2}\\
\nonumber&\le  \left(\sqrt{\sigma_1(M,\g_\circ)}+c_{\countc}(n)\, \sqrt{\frac{\delta }{b\, }}\right)^2\\
\nonumber&=\frac{\delta}{b} \left(\frac{1}{A^{\frac{n}{\kappa(n-2)}}\sqrt{V_1V}}+\,c_{19}(n) \right)^2.
\end{align}
We use $\vol_{\g}$ instead of $\vol_{\g_\circ}$ in inequality \eqref{eq:26} since the two metrics are quasi-isometric, with the constant multiple accounted for in $c_{18}(n)$.  We also replaced $L$ by its upper bound, recall the definition of $\epsilon$ in \eqref{def:epsilon}. In \eqref{eq:27}, we use the fact that $$\supp(\nabla f)=S_{\Sigma}\cup\left(\ostube^{2\epsilon}(\Sigma)\setminus\left(\ostube^{\epsilon}(\Sigma)\cup M^{3\epsilon}_{\thin,\Sigma}\right)\right)\subset M_{\thick}^\epsilon.$$ Indeed, by definition, 
$$S_\Sigma\subset M_{\thick}^{3\epsilon}\subset M_{\thick}^{\epsilon},\qquad\ostube^{2\epsilon}(\Sigma)\setminus\left(\ostube^{\epsilon}(\Sigma)\cup M^{3\epsilon}_{\thin,\Sigma}\right)\subset \ostube^{2\epsilon}(\Sigma\cap M_{\thick,\Sigma}^{3\epsilon}).$$
For every $x,p\in M$, we have (see \cite{Xu} and reference therein) $$\inj_{{(\double M,\g)}}(x)\ge \inj_{{(\double M,\g)}}(p)-\dist_{\g}(x,p).$$ Taking $x\in \ostube^{2\epsilon}(\Sigma\cap M_{\thick,\Sigma}^{3\epsilon})$ and $p\in \Sigma \cap M_{\thick,\Sigma}^{3\epsilon} $ with $\dist(x,p)=2\epsilon$,  we obtain that $\inj_{(\double M,\g)}(x)\ge \epsilon$. Hence $x\in M_{\thick}^\epsilon$.
Therefore, by Lemma \ref{lem:stekdir} and Remark \ref{rem:sobolevinq}, we get 
\[c_{\countc}(n,\kappa)<\sigma_1^D(\tilde M_\Sigma,\g_\circ)\le {\delta} \left(\frac{1}{A^{\frac{n}{\kappa(n-2)}}\sqrt{V_1V}}+\,c_{19}(n) \right)^2.\]
Recall that $A$, $V_1$ and $V$ have a uniform lower bound depending only on $n$. Hence, we can choose $\delta$ (depending only on $n$ and $\kappa$) small enough to  get a contradiction. It completes the proof. 
\end{proof}

\begin{remark}\label{rem:HMP}
   In \cite{HMP}, the argument presented above is  {adapted} to the case of surfaces leading to an improvement of Perrin's result \cite{Per22} for hyperbolic surfaces. In dimension two,  the boundary is a collection of simple closed geodesics. Thus,  some steps of the above argument can be simplified.  However, the main technical challenge is that the thick part is not necessarily connected. 
\end{remark}

\section{Constructing small Steklov eigenvalues}\label{sec:smallstek}
\setcounter{countc}{0}
In this section, we give examples showing the necessity of the presence of the volume of the manifold and {the volume of} its boundary in the lower bounds given in  Theorems \ref{thm:Schoencounterpart} and \ref{thm:specgap-intro}. \\
The first example not only shows that the presence of $V$ is necessary when  $b>1$ but also the exponent of  $V$ in Theorem \ref{thm:specgap} is sharp. 
\begin{example}\label{ex:smalleigenvalue} Given any $(-\kappa^2)$–\,pinched $n$–\,manifold $M$ with $b>1$ totally geodesic boundary components, one can construct a family $\{M_j\}_{j=1}^\infty$ of $(-\kappa^2)$–\,pinched $n$–\,manifold by attaching copies of $M$ such that  each $M_j$ has $b$ totally geodesic boundary components,  
the volume of $\partial M_j$ is uniformly bounded,
    and $$\lim_{j\to\infty}\sigma_{b-1}(M_j)=0.$$ 
Moreover, the rate of decay is of order $\frac{1}{\vol(M_j)}$.

Let $M_0$ be defined as $M$ with $b$ boundary components $\{\Sigma_1,\cdots,\Sigma_b\}$, and let $M_1^{(i)}$,  $1\le i\le b$, be obtained by attaching two copies of $M_0$ along their identical boundaries, except for $\Sigma_i$. As a result, $M_1^{(i)}$ is a manifold with two boundary components, each isometric to $\Sigma_i$.
Inductively, let $M_j^{(i)}$  be a manifold obtained by attaching $M_1^{(i)}$ to $M_{j-1}^{(i)}$ along one of its boundary components. Note that $M_j^{(i)}$ also has two boundary components, each isometric to $\Sigma_i$. 
We  now define $M_{j}$,  $j\geq 1$, as the manifold obtained by attaching $M_{j}^{(i)}$ to $M_0$ along $\Sigma_i$, $1\le i\le b$. The boundary of each $M_j$ is isometric to $\partial M=\sqcup_{i}\Sigma_i$. See Figure \ref{Pic:3} for an illustration of this construction. 

\begin{figure}[h]
    \centering
    \def\svgwidth{0.5\textwidth}
\begingroup%
  \makeatletter%
  \providecommand\color[2][]{%
    \errmessage{(Inkscape) Color is used for the text in Inkscape, but the package 'color.sty' is not loaded}%
    \renewcommand\color[2][]{}%
  }%
  \providecommand\transparent[1]{%
    \errmessage{(Inkscape) Transparency is used (non-zero) for the text in Inkscape, but the package 'transparent.sty' is not loaded}%
    \renewcommand\transparent[1]{}%
  }%
  \providecommand\rotatebox[2]{#2}%
  \newcommand*\fsize{\dimexpr\f@size pt\relax}%
  \newcommand*\lineheight[1]{\fontsize{\fsize}{#1\fsize}\selectfont}%
  \ifx\svgwidth\undefined%
    \setlength{\unitlength}{723.43103476bp}%
    \ifx\svgscale\undefined%
      \relax%
    \else%
      \setlength{\unitlength}{\unitlength * \real{\svgscale}}%
    \fi%
  \else%
    \setlength{\unitlength}{\svgwidth}%
  \fi%
  \global\let\svgwidth\undefined%
  \global\let\svgscale\undefined%
  \makeatother%
  \begin{picture}(1,0.89554906)%
    \lineheight{1}%
    \setlength\tabcolsep{0pt}%
    \put(0,0){\includegraphics[width=\unitlength,page=1]{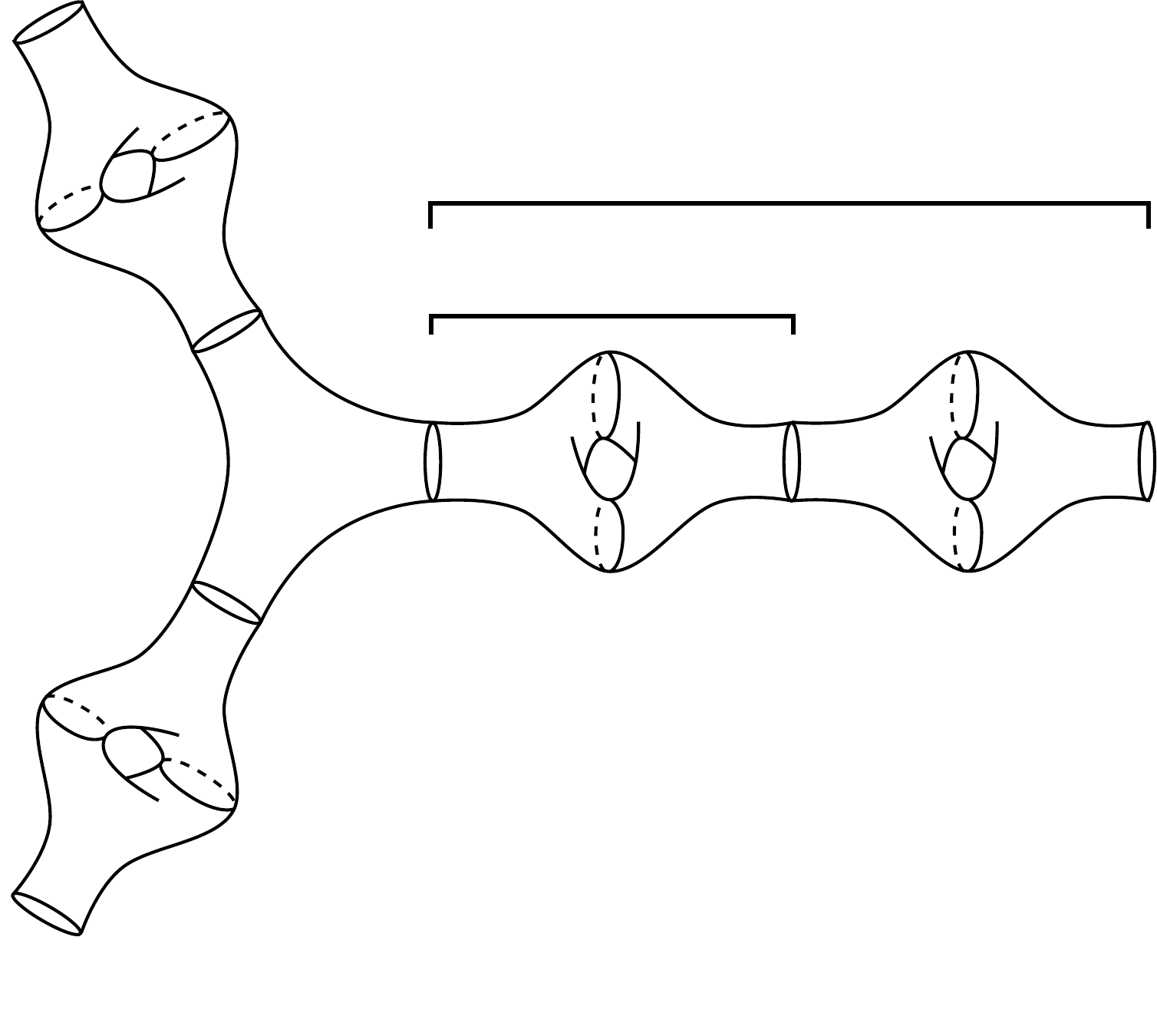}}%
    \put(0.47786216,0.64758819){\color[rgb]{0,0,0}\makebox(0,0)[lt]{\lineheight{1.25}\smash{\begin{tabular}[t]{l}$M_1^{(i)}$\end{tabular}}}}%
    \put(0.63687278,0.74763919){\color[rgb]{0,0,0}\makebox(0,0)[lt]{\lineheight{1.25}\smash{\begin{tabular}[t]{l}$M_2^{(i)}$\end{tabular}}}}%
    \put(0,0){\includegraphics[width=\unitlength,page=2]{small_eig.pdf}}%
    \put(0.31814324,0.00672755){\color[rgb]{0,0,0}\makebox(0,0)[lt]{\lineheight{1.25}\smash{\begin{tabular}[t]{l}$M_1$\end{tabular}}}}%
    \put(0.23079409,0.47847537){\color[rgb]{0,0,0}\makebox(0,0)[lt]{\lineheight{1.25}\smash{\begin{tabular}[t]{l}$M_0$\\\\\end{tabular}}}}%
  \end{picture}%
\endgroup%

    \caption{This picture illustrates the construction of the sequence $M_j$ when $M=M_0$ is an $n$–\,manifold with three totally geodesic boundary components.}
    \label{Pic:3}
\end{figure}

We now construct $b$ test functions $\{f_{j,1},\cdots,f_{j,b}\}$ on $M_{j}$ with mutually disjoint support. 
\[f_{j,i}(x):=\begin{cases} 1-\frac{\dist(x,\Sigma_i)}{\dist(M_j\setminus M_{j}^{(i)},\,\Sigma_i)},&x\in M_j^{(i)} \\
 0&x\in M_j\setminus M_j^{(i)},
\end{cases}\]
 Using the variational  characterisation of $\sigma_{b-1}(M_j)$, 
 \[\sigma_{b-1}(M)=\inf_{E}\sup_{0\ne f\in E}\frac{\int_{M}|\nabla f|^2\dd v}{ \int_{\partial M}f^2\dd s}\]
where $E\subset{H^1(M)}$ and its trace onto $L^2(\partial M)$ gives a $b$–\,dimensional subspace of $L^2(\partial M)$, together with the inequality 
\[\dist(M_j\setminus M_{j}^{(i)},\,\Sigma_i)\ge 2j \tube_{n,\kappa}(A),\]
we get
\begin{eqnarray*}
    \sigma_{b-1}(M_j)&\le& \max_{1\le i\le b}\frac{\int_{M_j}|\nabla f_{j,i}|^2\dd v}{\int_{\partial M_j}f_{j,i}^2\dd s}\\&\le &\frac{V}{j\tube_{n,\kappa}^2(A)A}\\&\le& \frac{(b+1)V^2}{\tube_{n,\kappa}^2(A)A}\frac{1}{\vol(M_j)}\underset{j\to\infty}{\longrightarrow} 0,
\end{eqnarray*}
where $V=\vol(M)$ and $\displaystyle{A=\max_{1\le i\le b}\vol(\Sigma_i)}$.\hfill\qed
\end{example}
We now show that the presence of the {volume} of the boundary in the lower bound given in Theorem \ref{thm:Schoencounterpart} is necessary. 
\begin{theorem}\label{thm:smalleigenvalues}
   For any  given $k \in \mathbb{N}$ and any $0<\varepsilon<\eta$, where {$\eta$} is the  constant {defined in Section \ref{sect: preliminaries}}, there exists a  hyperbolic $n$–\,manifold $M_\varepsilon$, $n\ge3$, with two totally geodesic boundaries such that the distance between two boundary components is less than $\epsilon$ and
    \begin{align*}
        \sigma_k(M_\varepsilon) \leq c_{\countc}(k,n) \epsilon. 
    \end{align*} 
    In particular, the volume of $\partial M_\varepsilon$ tends to infinity as $\varepsilon\to0$.
\end{theorem}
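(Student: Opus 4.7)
The plan is to construct $M_\varepsilon$ as a uniformly thin hyperbolic slab bounded by two totally geodesic hypersurfaces $\Sigma_1, \Sigma_2$ with $\dist(\Sigma_1, \Sigma_2) < \varepsilon$, and then to bound $\sigma_k(M_\varepsilon)$ from above using $k+1$ test functions with mutually disjoint supports together with the variational characterization of $\sigma_k$.

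For the construction, I would begin with a closed hyperbolic $n$-manifold $N_0$ containing a closed embedded non-separating totally geodesic hypersurface $\Sigma$ whose normal return map has length everywhere at most $\varepsilon$. Such $N_0$ can be produced via arithmetic constructions in any dimension $n\ge 3$; in dimension three, density results in the spirit of Fujii--Soma~\cite{fujii1997totally} offer an alternative. Cutting $N_0$ along $\Sigma$ yields $M_\varepsilon$: a hyperbolic $n$-manifold with two totally geodesic boundary components $\Sigma_1, \Sigma_2$ (both copies of $\Sigma$) at mutual distance less than $\varepsilon$. The assertion $\vol(\partial M_\varepsilon)\to\infty$ as $\varepsilon\to 0$ then follows immediately from Theorem~\ref{thm:tubeII}: disjoint totally geodesic hypersurfaces at distance less than $\varepsilon$ satisfy $\tube_n(\vol(\Sigma_i)) < \varepsilon$, which forces $\vol(\Sigma_i)\to\infty$ since $\tube_n$ is a decreasing bijection from $(0,\infty)$ onto itself.

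For the eigenvalue estimate I exploit that $\vol(\Sigma_1)$ is large when $\varepsilon$ is small. I would select $k+1$ pairwise disjoint geodesic balls $D_0,\ldots,D_k\subset\Sigma_1$ of a fixed radius depending only on $n$ and $k$. On each $D_j$, let $\psi_j$ denote the first Dirichlet Laplace eigenfunction, normalized by $\|\psi_j\|_{L^2(D_j)}=1$ and extended by zero to $\Sigma_1$; domain monotonicity applied to geodesic balls of fixed radius in the hyperbolic $(n-1)$-manifold $\Sigma_1$ gives $\lambda_1^D(D_j)\le c(n,k)$. Using the Fermi coordinates $(x,t)\in\Sigma_1\times[0,T(x)]$ based on $\Sigma_1$ -- which by the uniform thinness of $M_\varepsilon$ parameterize all of $M_\varepsilon$ with $T(x)\le\varepsilon$ -- define $f_j(x,t):=\psi_j(x)$. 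The $f_j$'s then have pairwise disjoint supports. In the warped-product hyperbolic Fermi metric $g=dt^2+\cosh^2(t)\,g_{\Sigma_1}$ and using $\varepsilon<\eta<1$, a direct computation yields
\[
\int_{M_\varepsilon}|\nabla f_j|^2\,\dd v \;\le\; c(n)\,\varepsilon\,\lambda_1^D(D_j) \;\le\; c(n,k)\,\varepsilon, \qquad \int_{\partial M_\varepsilon} f_j^2\,\dd s \;\ge\; \int_{\Sigma_1}\psi_j^2\,\dd v_{\Sigma_1} = 1.
\]
Since the $f_j$'s have mutually orthogonal (in fact disjointly supported) traces on $\partial M_\varepsilon$, the min-max characterization of $\sigma_k$ yields $\sigma_k(M_\varepsilon)\le c(k,n)\,\varepsilon$.

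The principal obstacle is the construction step: producing hyperbolic $n$-manifolds in every dimension $n\ge 3$ whose totally geodesic hypersurface has a uniformly short normal return map. This geometric uniformity is essential to ensure that $M_\varepsilon$ is actually a thin slab throughout (so that the constant-in-$t$ Fermi extension has small Dirichlet energy); without it the Fermi coordinates based on $\Sigma_1$ may fail to parameterize all of $M_\varepsilon$, and one must instead localize the test functions $\psi_j$ to a neighborhood of the common perpendicular geodesic between $\Sigma_1$ and $\Sigma_2$, leveraging the large volume of $\Sigma_i$ to still fit $k+1$ disjoint localized patches.
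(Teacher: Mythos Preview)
Your primary construction cannot exist. If the Fermi coordinates based on $\Sigma_1$ parameterize all of $M_\varepsilon$ with $T(x)\le\varepsilon$, then every point of $M_\varepsilon$ lies within distance $\varepsilon$ of $\Sigma_1$, and the warped volume element gives
\[
\vol(M_\varepsilon)\;\le\;\int_0^{\varepsilon}\cosh^{n-1}(t)\,\dd t\cdot\vol(\Sigma_1)\;\le\;2\varepsilon\,\vol(\Sigma_1)
\]
for small $\varepsilon$. But inequality~\eqref{inq:az} (Zeghib) forces $\vol(M_\varepsilon)\ge c(n)\vol(\partial M_\varepsilon)\ge c(n)\vol(\Sigma_1)$, so $c(n)\le 2\varepsilon$, which is impossible once $\varepsilon$ is small. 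A hyperbolic slab that is \emph{uniformly} $\varepsilon$-thin simply does not exist, so the step where you extend the $\psi_j$ ``constantly in $t$'' across all of $M_\varepsilon$ with $T(x)\le\varepsilon$ has no underlying manifold to live on.

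The paper sidesteps this by requiring only a \emph{single} short orthogonal arc $\alpha$ between the two boundary components (such $M_\varepsilon$ exist by \cite{BT11}), and then working in Fermi coordinates around the \emph{arc} $\alpha$ rather than around $\Sigma_1$. On the tube $T_\alpha\cong\alpha\times B^{n-1}$ the metric is $\cosh^2 r\,\dd t^2+\dd r^2+\sinh^2 r\,\dd\phi^2$, with the $t$-interval of length $\varepsilon$. One then takes the first $k+1$ Dirichlet eigenfunctions $u_1,\dots,u_{k+1}$ of a \emph{single} hyperbolic ball $\hball(1)$ in the cross-section and extends each of them constantly along $\alpha$; the $(k+1)$-dimensional test space comes from orthogonality of the $u_j$'s, not from disjoint supports. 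Your fallback paragraph gestures toward localizing near $\alpha$, but still tries to fit $k+1$ disjoint patches in $\Sigma_1$; away from the foot of $\alpha$ the distance to $\Sigma_2$ is not controlled, so a constant-in-$t$ extension from $\Sigma_1$ does not give an $O(\varepsilon)$ Dirichlet energy there. The fix is exactly the paper's: switch to Fermi coordinates around $\alpha$ and use several eigenfunctions of one cross-sectional ball.
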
 

\begin{remark}\label{rmk:small-eigenvalues}
It is not necessary to have at least two boundary components in order to construct small Steklov eigenvalues. The above proof can be applied to any sequence of manifolds with a connected totally geodesic boundary, provided that there exists a geodesic arc with endpoints meeting the boundary orthogonally, and its length going to zero. 

For example 
    in dimension 3, using the model for random hyperbolic 3-manifolds constructed in \cite{PR22}, there exist hyperbolic 3-manifolds with a single connected totally geodesic boundary component with arbitrarily small arcs which are homotopically non-trivial with respect to the boundary. The previous construction can then be applied to these manifolds. 
\end{remark}
\begin{proof}[Proof of Theorem \ref{thm:smalleigenvalues}]

Following the construction in \cite[Section 2]{BT11},  for any $\varepsilon > 0$, there exists a hyperbolic $n$–\,manifold $M_\varepsilon$ with two totally geodesic boundary components $\Sigma^\varepsilon_1$ and $\Sigma^\varepsilon_2$ such that the length of the minimal arc $\alpha$ homotopically non-trivial relative to $\del M_\varepsilon$ is less than $\varepsilon$. {The arc} $\alpha$ is a geodesic orthogonal to the boundary $\del M_\varepsilon$.  The tubular neighborhood theorem in the hyperbolic setting  
implies that the boundary volume should go to infinity as $\varepsilon$ tends to zero.

Without loss of generality, we assume the length $\length(\alpha) = \varepsilon$. 
Let  $p_1\in \Sigma_1^\varepsilon,$ and $ p_2\in \Sigma_2^\varepsilon$ be the endpoints of $\alpha$. 
Since $\varepsilon<{\eta}$, {the arc} $ \alpha$ is in the thin part of $ M_\varepsilon$. Hence,  we have a tube $T_\alpha\subset {({M}_{\varepsilon})_{\thin,\partial}}$  diffeomorphic to  $ \alpha \times B^{n-1}$ where $B^{n-1}$ is $(n-1)$–\,dimensional ball in $\R^{n-1}$. {Let $R_\varepsilon$ denote the distance between $\double\alpha$ and the boundary of $\double T_\alpha\subset \double M_\epsilon$. It} is strictly bigger than 1 (due to the choice of {$\eta$} in Section \ref{sect: preliminaries}), and moreover, $R_\varepsilon$ goes to $\infty$ when $\varepsilon$ tends to 0.  \\
We parameterise $T_\alpha$ using Fermi coordinates 
$(t, r, \phi)$ around $\alpha$, where $t\in (0,\varepsilon)$, $r\in(0,R_\varepsilon)$, and $ \phi\in\mathbb{S}^{n-2}$. The hyperbolic metric $h$ on $T_\alpha$ can be expressed as (see e.g. \cite{Bus80})
\[h=\cosh^2r\dd t^2+\dd r^2+\sinh^2r \dd\phi^2,\]
where $\dd\phi^2$ is the metric on the Euclidean sphere $\mathbb{S}^{n-2}$.
For every  $t\in(0,\varepsilon)$ and $R\in (0,R_\varepsilon)$ we have   embedded   balls ${B}_t=\{(t,r,\phi)\in T_\alpha: r\in(0,R), \phi\in \mathbb{S}^{n-2}\}$, and isometric to the hyperbolic ball $\hball(R)\subset\mathbb{H}^{n-1}$ of radius $R$.    \\
 Let $R=1$ and $\lambda^D_k(\hball(1))$ be the Dirichlet eigenvalues of the unit hyperbolic ball $\hball(1)$ and $\{u_k\}$ be the corresponding  eigenfunctions normalized by $\|u_k\|_{L^2(\hball(1))} = 1$. 
We define the function $U_k: M_\varepsilon\to \mathbb{R}$ as follows. For $x\in M_\varepsilon\setminus T_{\alpha}$, we set $U_k(x)$ to be equal to 0.  For $x\in T_\alpha$, the function $U_k$ expressed in Fermi coordinates takes the form of

\begin{align*}
    U_k(t,r,\phi) =\begin{cases}
      u_k(r,\phi)& (r,\phi)\in(0,1)\times \mathbb{S}^{n-1} \\
      0& \text{otherwise}
    \end{cases}
\end{align*}

Since by construction  $\partial_t U_k = 0$, we have 
$|\nabla U_k|^2 = |\nabla^{B_t} U_k|^2$ on $T_\alpha$.
Thus,
\begin{align*}
    \int_{T_\alpha} |\nabla U_k|^2 \dd v &= \varepsilon\int_0^1\int_{\mathbb{S}^{n-2}} |\nabla^{B_t} U_k|^2 \sinh^{n-2}r \cosh r \dd r \, d\phi\\& 
\le \varepsilon\cosh(1)\int_0^1\int_{\mathbb{S}^{n-2}} |\nabla^{B_t} U_k|^2 \sinh^{n-2}r \dd r \, d\phi \\&=  \varepsilon\cosh(1)\lambda_k^D(\hball(1)).
\end{align*}
The test functions  $\{U_j\}_{j=1}^{k+1}$  span a $(k+1)$–\,dimensional subspace of $L^2(\partial M_\varepsilon)$. Therefore, by the variational characterisation of the Steklov eigenvalues, {we conclude that}
\begin{align*}
    \sigma_k(M_\varepsilon) &\leq \max_{j=1,\dots,k+1} \frac{\int_{T_\alpha} |\nabla U_j|^2}{\int_{\del T_\alpha} U_j^2} \leq \frac{\cosh(1)}{2} \lambda_{k+1}^D(\hball(1))\varepsilon.
\end{align*}
\end{proof}
In the proof above, one has the flexibility to substitute the radius of the ball with any value within the range of $(0, R_\varepsilon)$. However, such a substitution does not give an improved rate of decay, because of the following limiting behavior due to Berge \cite{Berge2023}, $$\lim_{R\to\infty}\lambda_{k+1}^D(\hball(R))=\frac{(n-1)^2}{4}.$$

\bibliographystyle{alpha}
\bibliography{HyperbolicRef}
\end{document}